\def\LL{{\mathcal L}}
\def\PP{{\mathcal P}}
\def\TT{{\mathcal T}}
\def\UU{{\mathcal U}}
\def\M{{\mathbb M}}
\DeclareMathOperator{\Jac}{Jac}
\let\oldmarginpar\marginpar
\renewcommand\marginpar[1]{\-\oldmarginpar[\raggedleft\footnotesize #1]%
{\raggedright\footnotesize #1}}
\newcommand{\beqn}{\begin{equation}}
\newcommand{\eeqn}{\end{equation}}
\newcommand{\bear}{\begin{eqnarray}}
\newcommand{\eear}{\end{eqnarray}}
\newcommand{\bean}{\begin{eqnarray*}}
\newcommand{\eean}{\end{eqnarray*}}
\def\M{\mathcal{M}}
\def\P{\mathcal{P}}
\def\R{\mathbb{R}}
\def\S{\mathbb{S}}
\def\T{\mathbb{T}}
\def\d{\,\mathrm{d}}
\def\p{\partial}
\def\isd{\int_{\S^{d-1}}}
\def\ird{\int_{\R^d}}
\def\1{\mathbbm{1}}
\def\:{\colon}
\newtheorem{thm}{Theorem}[section]
\newtheorem{lem}[thm]{Lemma}
\newtheorem{hyp}{Hypothesis}
\theoremstyle{definition}
\theoremstyle{remark}
\newtheorem*{remark}{Remark}
\newtheorem{rem}[thm]{Remark}
\theoremstyle{example}
\title{Hypocoercivity of linear kinetic equations via Harris's Theorem}
\author{José A. Cañizo\footnote{\textsc{Departamento de
    Matemática Aplicada, Universidad de Granada, 18071 Granada,
    Spain.}
  \textit{Email address}: \texttt{\href{mailto:canizo@ugr.es}{canizo@ugr.es}}} \and Chuqi Cao\footnote{ \textsc{Chuqi Cao, CEREMADE, Universit\'{e} Paris Dauphine, Place du Marechal de Lattre de Tassigny, Paris, 75775}
  \textit{E-mail address}: \texttt{\href{mailto:cao@ceremade.dauphine.fr}{cao@ceremade.dauphine.fr}}}
\and Josephine Evans\footnote{\textsc{Josephine Evans, CEREMADE, Universit\'{e} Paris Dauphine, Place du Marechal de Lattre de Tassigny, Paris, 75775}
  \textit{E-mail address}: \texttt{\href{mailto:josephine.evans@dauphine.psl.eu}{josephine.evans@dauphine.psl.eu}}} \and Havva Yolda\c{s} \footnote{\textsc{Havva Yoldaş. Basque Center for Applied Mathematics, Alameda de Mazarredo 14,
    48009 Bilbao, Spain \& Departamento de
    Matemática Aplicada, Universidad de Granada, 18071 Granada,
    Spain.}
  \textit{Email address}: \texttt{\href{mailto:hyoldas@bcamath.org}{hyoldas@bcamath.org}}}
}
\begin{document}

\maketitle

\begin{abstract}
  We study convergence to equilibrium of the linear relaxation
  Boltzmann (also known as linear BGK) and the linear Boltzmann
  equations either on the torus
  $(x,v) \in \mathbb{T}^d \times \mathbb{R}^d$ or on the whole space
  $(x,v) \in \mathbb{R}^d \times \mathbb{R}^d$ with a confining
  potential. We present explicit convergence results in total
  variation or weighted total variation norms (alternatively $L^1$ or
  weighted $L^1$ norms). The convergence rates are exponential when
  the equations are posed on the torus, or with a confining potential
  growing at least quadratically at infinity. Moreover, we give
  algebraic convergence rates when subquadratic potentials
  considered. We use a method from the theory of Markov processes
  known as Harris's Theorem.
\end{abstract}

\tableofcontents

\section{Introduction}

The goal of this paper is to give quantitative rates of convergence to
equilibrium for some linear kinetic equations, using a method based on
Harris's Theorem from the theory of Markov processes \cite{H56, MT93,
  HM11} that we believe is very well adapted to hypocoercive, nonlocal
equations. We consider equations of the type
\begin{equation*}
  \partial_t f + v \cdot \nabla_x f
  = \mathcal{L} f,
\end{equation*}
where $f = f(t, x, v)$, with time $t \geq 0$, space $x \in \T^d$ (the
$d$-dimensional unit torus), and velocity $v \in \R^d$. The operator
$\mathcal{L}$ acts only on the $v$ variable, and it must typically be
the generator of a stochastic semigroup for our method to work. We
give explicit results for $\mathcal{L}$ equal to the linear relaxation
Boltzmann operator (sometimes known as linear BGK operator), and for
$\mathcal{L}$ equal to the linear Boltzmann operator (see below for a
full description). We also consider the equations posed on the whole
space $(x, v) \in \R^d \times \R^d$ with a confining potential $\Phi$:
\begin{equation*}
  \partial_t f + v \cdot \nabla_x f
  -\left(\nabla_x \Phi \cdot \nabla_v f\right)
  = \mathcal{L} f.
\end{equation*}
We are able to give exponential convergence results on the
$d$-dimensional torus, or with confining potentials growing at least
quadratically at $\infty$, always in total variation or weighted total
variation norms (alternatively, $L^1$ or weighted $L^1$ norms). For
subquadratic potentials we give algebraic convergence rates, again in
the same kind of weighted $L^1$ norms. Some results were already
available for these equations \cite{CCG03,EL10,NM06,DMS15,H07, Me17}. We will give a more detailed account of them after we describe them
more precisely. Previous proofs of convergence to equilibrium used
strongly weighted $L^2$ norms (typically with a weight which is the
inverse of a Gaussian), so one advantage of our method is that it
directly yields convergence for a much wider range of initial
conditions. The result works, in particular, for initial conditions
with slow decaying tails, and for measure initial conditions with very
bad local regularity. The method gives also existence of stationary
solutions under quite general conditions; in some cases these are
explicit and easy to find, but in other cases they can be
nontrivial. We also note that our results for subquadratic potentials
are to our knowledge new. Apart from these new results, our aim is to
present a new application of a probabilistic method, using mostly PDE
arguments, and which is probably useful for a wide range of models.

The study of the speed of relaxation to equilibrium for kinetic
equations is a well known problem, both for linear and nonlinear
models. The central obstacle is that dissipation happens only on the
$v$ variable via the effect of the operator $\mathcal{L}$, while only
transport takes place in $x$. The transport then ``mixes'' the
dissipation into the $x$ variable, and one has to find a way to
estimate this effect. The theory of hypocoercivity was developed in
\cite{V09,H07,HN04} precisely to overcome these problems for linear
operators. In a landmark result, \cite{DV05} proved that the full
nonlinear Boltzmann equation converges to equilibrium at least at an
algebraic rate. Exponential convergence results for the (linear)
Fokker-Planck equation were given in \cite{DV01}, and a theory for a
range of linear kinetic equations has been given in \cite{DMS15}. All
of these results give convergence in exponentially weigthed $L^2$
norms or $H^1$ norms; convergence to equilibrium in weigthed $L^1$
norms can then be proved for several kinetic models by using the
techniques in \cite{GMM}. There are also works which deal directly
with the $L^1$ theory via abstract semigroup methods. We mention \cite{BS13, M14}  which give results for linear scattering equations on
the torus with spatially degenerate jump rates.

\medskip

Let us describe our equations more precisely. The \emph{linear
  relaxation Boltzmann equation} is given by
\begin{equation}
  \label{LRB}
  \partial_t f + v \cdot \nabla_x f -\left(\nabla_x \Phi \cdot
    \nabla_v f\right) = \mathcal{L}^+ f -f,
\end{equation}
where
\[ \mathcal{L}^+ f =\left( \int f(t,x,u) \mathrm{d}u\right) \mathcal{M}(v), \]
and $\M(v) := (2\pi)^{-d/2} \exp(-|v|^2 / 2)$. The function
$f = f(t,x,v)$ depends on time $t \geq 0$, space $x \in \R^d$, and
velocity $v \in \R^d$, and the potential $\Phi \: \R^d \to \R$ is a
$\mathcal{C}^2$ function of $x$. Alternatively, we consider this equation on
the torus; that is, for $x \in \T^d$, $v \in \R^d$, assuming periodic
boundary conditions. In that case we omit $\Phi$ (which corresponds to
$\Phi = 0$ in the above equation):
\begin{equation}
  \label{LRB-torus}
  \partial_t f + v \cdot \nabla_x f = \mathcal{L}^+ f -f.
\end{equation}
This simple equation is well studied in kinetic theory and can be
thought of as a toy model with similar properties to either the
non-linear BGK equation or linear Boltzmann equation. It is also one
of the simplest examples of a hypocoercive equation.  Convergence to
equilibrium in $H^1$ for this equation has been shown in \cite{CCG03},
at a rate faster than any function of $t$. It was then shown to
converge exponentially fast in both $H^1$ and $L^2$ using
hypocoercivity techniques in \cite{H07, NM06, DMS15}.

The linear Boltzmann equation is of a similar type:
\begin{equation}
  \label{LBEharris}
  \partial_t f+ v \cdot \nabla_x f-\left(\nabla_x \Phi \cdot \nabla_v
    f\right)  = Q(f, \mathcal{M}),
\end{equation}
where $\Phi$ is a $\mathcal{C}^2$ potential and
$\M(v) := (2\pi)^{-d/2} \exp(-|v|^2 / 2)$ as before, and $Q$ is the
Boltzmann operator
\[ Q(f,g) = \int_{\mathbb{R}^d}\int_{\mathbb{S}^{d-1}} B(|v- v_*|,
  \sigma)\left(f(v')g(v_*')-f(v)g(v_*)
  \right) \d\sigma \d v_*, \]
\[ v' = \frac{v+v_*}{2} + \frac{|v-v_*|}{2}\sigma, \quad v'_* =
  \frac{v+v_*}{2}- \frac{|v-v_*|}{2}\sigma,
\]
and $B$ is the \emph{collision kernel}. We always assume that $B$ is a
hard kernel and can be written as a product
\begin{equation}
  \label{eq:Bsplits}
  B(|v-v_*|, \sigma) =
  |v-v_*|^\gamma\ b\left( \sigma \cdot \frac{v-v_*}{|v-v_*|} \right),
\end{equation}
for some $\gamma \geq 0$ and $b$ integrable and uniformly positive on
$[-1,1]$; that is, there exists $C_b > 0$ such that
\begin{equation}
  \label{eq:b-hyp}
  b(z) \geq C_b
  \qquad \text{for all $z \in [-1,1]$.}
\end{equation}
This assumption includes for example the physical hard-spheres
collision kernel, for which $b \equiv 1$. The so-called
\emph{non-cutoff} kernels, for which $b$ is not integrable, are not
considered in this work.

As before, alternatively we consider the same equation posed for $x
\in \T^d$, $v \in \R^d$, without any potential $\Phi$:
\begin{equation}
  \label{LBEharris-torus}
  \partial_t f+ v \cdot \nabla_x f = Q(f, \mathcal{M}).
\end{equation}
This equation models gas particles interacting with a background
medium which is already in equilibrium. Moreover, it has been used in describing many other systems like radiative transfer, neutron transportation, cometary flow and dust particles. The spatially homogeneous case
has been studied in \cite{LMT, BCL15, CEL17, LM17}. The kinetic equations
\eqref{LBEharris} or \eqref{LBEharris-torus} fit into the general
framework in \cite{NM06, DMS15}, so convergence to equilibrium in
weighted $L^2$ norms may be proved by using the techniques described
there.

We denote by $\P(\Omega)$ the set of probability measures on a set
$\Omega \subseteq \R^k$ (that is, the probability measures defined on
the Borel $\sigma$-algebra of $\Omega$). We state our main results on
the torus, and then on $\R^d$ with a confining potential:

\begin{thm}[Exponential convergence results on the torus]
  \label{thm:main-torus}
  Suppose that $t \mapsto f_t$ is the solution to \eqref{LRB-torus} or
  \eqref{LBEharris-torus} with initial data
  $f_0 \in \P(\T^d \times \R^d)$. In the case of equation
  \eqref{LBEharris-torus} we also assume \eqref{eq:Bsplits} with
  $\gamma \geq 0$ and \eqref{eq:b-hyp}. Then there exist constants
  $C>0, \lambda >0$ (independent of $f_0$) such that
  \[
    \| f_t - \mu \|_* \leq Ce^{-\lambda t} \| f_0 - \mu \|_*,
  \]
  where $\mu$ is the only equilibrium state of the corresponding
  equation in $\P(\T^d \times \R^d)$ (that is, $\mu(x,v) =
  \M(v)$). The norm $\|\cdot\|_*$ is just the total variation norm
  $\|\cdot\|_{\mathrm{TV}}$ for equation \eqref{LRB-torus},
  \begin{equation*}
    \| f_0 - \mu \|_* = \|f_0 - \mu\|_{\mathrm{TV}}
    := \int_{\R^d} \int_{\T^d} |f_0 - \mu| \d x \d v
    \qquad \text{for equation \eqref{LRB-torus}},
  \end{equation*}
  and it is a weigthed total variation norm in the case of equation
  \eqref{LBEharris-torus}:
  \begin{equation*}
    \| f_0 - \mu \|_* = \int_{\R^d} \int_{\T^d}
    (1+|v|^2) |f_0-\mu| \d x \d v
    \qquad \text{for equation \eqref{LBEharris-torus}}.
  \end{equation*}
\end{thm}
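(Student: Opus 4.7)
The plan is to invoke Harris's theorem for Markov semigroups, in the Hairer--Mattingly formulation \cite{HM11}. This theorem yields exponential contraction in a $V$-weighted total variation distance $\|\mu\|_V := \int (1+\beta V)\,\mathrm{d}|\mu|$ (for a suitable $\beta > 0$) once the semigroup $(S_t)_{t \geq 0}$ associated with \eqref{LRB-torus} or \eqref{LBEharris-torus} is shown to satisfy a Lyapunov/drift inequality $S_{t_0} V \leq \gamma V + K$ with $\gamma \in (0,1)$, together with a Doeblin-type minorization $S_T \delta_z \geq \alpha\, \nu$ for some probability measure $\nu$ on $\T^d \times \R^d$ and $\alpha > 0$, valid for every $z$ in a sublevel set $\{V \leq R\}$ with $R$ large compared to $K/(1-\gamma)$. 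I match $V$ to the target norm: for \eqref{LRB-torus} the statement asks for plain TV contraction, which corresponds to the trivial choice $V \equiv 0$, so one is in the pure Doeblin regime and the minorization must hold uniformly in $(x_0,v_0) \in \T^d \times \R^d$; for \eqref{LBEharris-torus} the $(1+|v|^2)$-weight in $\|\cdot\|_*$ forces $V(v) = |v|^2$.

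The drift step is the easier one. It is vacuous for \eqref{LRB-torus}. For \eqref{LBEharris-torus}, since $V$ depends only on $v$, the transport term $v \cdot \nabla_x V$ vanishes, and only a spatially homogeneous moment estimate for $f \mapsto Q(f,\M)$ is needed: using \eqref{eq:Bsplits}--\eqref{eq:b-hyp}, the loss rate $\int \M(v_*) B(|v-v_*|,\sigma)\,\mathrm{d}\sigma\,\mathrm{d} v_*$ grows like $\langle v \rangle^\gamma$ and dominates the gain after testing against $V$. One obtains $\frac{\mathrm{d}}{\mathrm{d}t} \int V\,\mathrm{d} f_t \leq -\lambda \int V\,\mathrm{d} f_t + K$ and the Lyapunov bound follows by Gr\"onwall.

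The bulk of the work is the minorization. For \eqref{LRB-torus} I use the Duhamel representation
\[
f_t(x,v) = e^{-t} f_0(x-tv, v) + \int_0^t e^{-(t-s)} \rho_{f_s}(x-(t-s)v)\,\M(v)\,\mathrm{d}s,
\]
where $\rho_{f_s}(x) := \int f_s(x,u)\,\mathrm{d}u$. Starting from $f_0 = \delta_{(x_0,v_0)}$ and substituting the same formula into $\rho_{f_s}$, one obtains after two iterations a nonnegative contribution in which both the current velocity $v$ and an intermediate velocity have been resampled from $\M$, while the position variable undergoes the composition of two ballistic segments with Gaussian velocities. For $T$ large enough these displaced Gaussians cover $\T^d$ entirely, and the resulting density is bounded below by a positive constant times $\M(v)\mathbf{1}_{\T^d}(x)$ for $|v|$ in any fixed ball, uniformly in $(x_0,v_0)$; this provides the global Doeblin estimate required by the pure Doeblin regime. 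For \eqref{LBEharris-torus} the same iterative scheme applies with the gain operator $Q^+(f,\M)$ playing the role of $\rho_f\,\M$; using $b \geq C_b$ one shows that $Q^+(\delta_{v_0},\M)$ admits an absolutely continuous Gaussian-type lower bound on a ball whose size depends continuously on $|v_0|$, and this is uniform on the compact velocity set $\{|v_0| \leq \sqrt{R}\}$, which suffices for the minorization on $\{V \leq R\}$.

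The Boltzmann minorization is the main technical obstacle: unlike the clean resampling $f \mapsto \rho_f\,\M$ in the LRB case, the gain kernel $Q^+(\cdot,\M)$ couples pre- and post-collisional velocities through the collisional geometry, and extracting a quantitative absolutely continuous lower bound requires a careful analysis using the Carleman or $\sigma$-parametrisation of $Q^+$ combined with the uniform positivity $b \geq C_b$. Granted the two conditions, Harris's theorem delivers constants $C, \lambda > 0$ and the contraction $\|S_t f_0 - S_t g_0\|_* \leq C e^{-\lambda t} \|f_0 - g_0\|_*$ for all probability measures $f_0, g_0$ on $\T^d \times \R^d$, as well as uniqueness of the invariant probability measure. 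Since $\mu(x,v) = \M(v)$ is manifestly invariant ($v \cdot \nabla_x \M = 0$, and either $\mathcal{L}^+\M - \M = 0$ or $Q(\M,\M) = 0$), choosing $g_0 = \mu$ gives the claimed estimate.
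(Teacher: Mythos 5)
Your proposal is correct and follows essentially the same route as the paper: pure Doeblin for \eqref{LRB-torus} via a twice-iterated Duhamel formula (velocity resampling by $\mathcal{L}^+$ followed by transport covering the torus, uniformly in $(x_0,v_0)$), and Harris's theorem for \eqref{LBEharris-torus} with $V=|v|^2$, the drift coming from the $\langle v\rangle^\gamma$-weighted loss term and the local minorization from a Carleman-representation lower bound on $Q^+(\cdot,\M)$ using $b\geq C_b$. The only detail you gloss over that the paper must handle explicitly is that in the Boltzmann Duhamel iteration the loss factor $e^{-\int\kappa(V_r)\,\mathrm{d}r}$ has an unbounded rate, which the paper controls by truncating the gain operator to an energy sublevel set; this fits within the ``careful analysis'' you already acknowledge.
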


\begin{thm}[Exponential convergence results with a confining potential]
  \label{thm:main-confining}
  Suppose that $t \mapsto f_t$ is the solution to \eqref{LRB} or
  \eqref{LBEharris} with initial data $f_0 \in \P(\R^d \times
  \R^d)$ and a potential $\Phi \in \mathcal{C}^2(\R^d)$ which is
  bounded below, and satisfies
  \[
    x\cdot \nabla_x \Phi(x) \geq \gamma_1 |x|^2 + \gamma_2 \Phi(x) - A,
    \qquad x \in \R^d,
  \]
  for some positive constants $\gamma_1, \gamma_2, A$.  Define $\langle x \rangle = \sqrt{1+|x|^2}$. In the case of
  equation \eqref{LBEharris-torus} we also assume \eqref{eq:Bsplits}
  with $\gamma \geq 0$, 
  \eqref{eq:b-hyp} and
  \[ x \cdot \nabla_x \Phi(x) \geq \gamma_1 \langle x \rangle^{\gamma+2} + \gamma_2 \Phi(x) -A, \]  
    for some positive constants $\gamma_1, \gamma_2, A$. Then there exist constants $C>0, \lambda >0$
  (independent of $f_0$) such that
  \[
    \| f_t - \mu\|_* \leq Ce^{-\lambda t} \| f_0 - \mu \|_*,
  \]
  where $\mu$ is the only equilibrium state of the corresponding
  equation in $\P(\R^d \times \R^d)$,
  \[ \mathrm{d}\mu = \mathcal{M}(v)e^{-\Phi(x)}\mathrm{d}v\mathrm{d}x. \]
  The norm $\|\cdot\|_*$ is a weighted total variation norm defined by
  \[
    \| f_t - \mu \|_* := \int \left( 1+ \frac{1}{2}|v|^2
      +\Phi(x) +|x|^2 \right)|f_t - \mu| \d v \d x.
  \]
\end{thm}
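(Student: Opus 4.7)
The plan is to invoke Harris's theorem, which combines a Foster--Lyapunov drift bound with a minorisation (Doeblin-type) condition on sublevel sets of the Lyapunov function into exponential contraction in a weighted total variation norm. The weight $W(x,v) := 1 + \tfrac{1}{2}|v|^2 + \Phi(x) + |x|^2$ appearing in $\|\cdot\|_*$ is the natural Lyapunov candidate, though to close the drift computation I anticipate needing a hypocoercive perturbation $V := 1 + \tfrac{a}{2}|v|^2 + a\Phi(x) + b|x|^2 + c\,x\cdot v$ with $c \ll a$, so that $V \sim W$ and the small cross-term transfers dissipation from the $v$ variable into the $x$ variable, in the spirit of the Villani-style modifications.

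\medskip

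For the drift I compute the backward generator $\mathcal{G}^{*} V$. The Hamiltonian transport part $v \cdot \nabla_x V - \nabla_x \Phi \cdot \nabla_v V$ produces a cross term of the form $(2b-c)\, v \cdot x$ (killed by choosing $b = c/2$) together with the crucial term $-c\, x \cdot \nabla_x \Phi$, which the hypothesis $x \cdot \nabla_x \Phi \geq \gamma_1 |x|^2 + \gamma_2 \Phi - A$ converts into a genuine confining contribution in $|x|^2$ and $\Phi$. In the LRB case the adjoint of the collision part gives $\tfrac{ad}{2} - \tfrac{a}{2}|v|^2 - c\, x \cdot v$; taking $a \gg c$ produces a negative $|v|^2$ coefficient, and the residual $-c\, x \cdot v$ is absorbed via Young's inequality into the $|v|^2$ and $|x|^2$ dissipation, yielding $\mathcal{G}^{*} V \leq -\lambda V + K$. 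In the LBE case the collision is replaced by the adjoint of $Q(\cdot, \mathcal{M})$, and Povzner-type moment estimates based on \eqref{eq:Bsplits}--\eqref{eq:b-hyp} produce a dissipation of order $-\langle v \rangle^{\gamma + 2}$, which is matched exactly by the strengthened hypothesis $x \cdot \nabla_x \Phi \geq \gamma_1 \langle x \rangle^{\gamma+2} + \gamma_2 \Phi - A$, and the argument proceeds analogously.

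\medskip

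For the minorisation, fix $R > 0$ and start from $\delta_{(x_0, v_0)}$ with $W(x_0, v_0) \leq R$. Rewriting LRB as $(\partial_t + v \cdot \nabla_x - \nabla_x \Phi \cdot \nabla_v + 1) f = \mathcal{L}^{+} f$ and applying Duhamel expresses $S_t \delta_{(x_0, v_0)}$ as a Neumann series indexed by the number of collisions, with each post-collision velocity carrying a factor $\mathcal{M}$. Keeping only the one- or two-collision contribution and restricting collision times and intermediate velocities to small open sets, I aim to produce a pointwise lower bound of the form $c_R \mathbf{1}_U(x,v)\, \mathcal{M}(v)$ at some time $t_*$, with $U$ independent of the starting point; for LBE, the hypothesis $b \geq C_b$ supplies the Carleman-type pointwise lower bound on the gain term needed to reduce to the LRB scenario. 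The main obstacle will be exactly this step in the whole-space setting: unlike the torus, the Hamiltonian flow generated by $\Phi$ is nonlinear, so one must verify both that characteristics starting from $\{W \leq R\}$ remain in a controlled region on $[0, t_*]$ (which follows from conservation of $\tfrac{1}{2}|v|^2 + \Phi(x)$ along free flow, together with $\Phi$ being bounded below) and that the map from post-collision data to the final state is a local diffeomorphism with enough independent degrees of freedom to cover $U$, which I would verify via the implicit function theorem applied to the Hamiltonian flow. Once both ingredients are in place, Harris's theorem delivers directly the exponential contraction with explicit $C, \lambda$; uniqueness of $\mu$ is forced by the contraction, and its explicit form $\mathcal{M}(v) e^{-\Phi(x)}$ is verified by inspection as a stationary solution.
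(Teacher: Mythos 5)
Your proposal follows essentially the same route as the paper: Harris's theorem applied with a Lyapunov function of the form $H(x,v)+a\,x\cdot v+b|x|^2$ whose cross term converts the drift hypothesis on $x\cdot\nabla_x\Phi$ into confinement (with the $\langle x\rangle^{\gamma+2}$ strengthening matching the $-\langle v\rangle^{\gamma+2}$ Povzner-type dissipation in the linear Boltzmann case), together with a minorisation obtained from a two-collision Duhamel expansion, a shooting argument on the Hamiltonian characteristics via the implicit function theorem, energy conservation to keep trajectories in a bounded region, and the Carleman lower bound on the gain term. This is precisely the structure of the paper's Lemmas on trajectory bounds, shooting, the Doeblin condition, and the Lyapunov conditions, so the proposal is correct and matches the paper's proof.
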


In all results above the constants $C$ and $\lambda$ can be explicitly
estimated in terms of the parameters appearing in the equation by
following the calculations in the proofs. We do not give them
explicitly since we do not expect them to be optimal, but they are
nevertheless completely constructive.

We also look at Harris type theorems with weaker controls on moments
to give analogues of all our theorems when the confining potential is
weaker and give algebraic rates of convergence with rates depending on
the assumption we make on the confining potential. Subgeometric
convergence for kinetic Fokker-Planck equations with weak confinement
has been shown in \cite{DFG09, BCG08, C18}. To our knowledge this is
the only work showing this type of convergence in a quantitative way
for the equations we present.

\begin{thm}[Subgeometric convergence results with weak confining potentials] \label{thm:main-subgeometric}
  Suppose that $t \mapsto f_t$ is the solution to \eqref{LRB} in the whole space with a confining potential $\Phi \in \mathcal{C}^2(\R^d)$ which is bounded below. Define $\langle x \rangle = \sqrt{1+|x|^2}$. Assume that for some
  $\beta$ in $(0, 1)$ the confining potential satisfies
\[ x\cdot\nabla_x\Phi(x) \geq \gamma_1 \langle x \rangle^{2\beta} + \gamma_2\Phi(x) -A,    \] for some positive constants $\gamma_1, \gamma_2, A$. Then there exists a constant $C>0$ such that
\[ \|f_t-\mu\|_{\mathrm{TV}} \leq \min \left\{ \|f_0-\mu\|_{\mathrm{TV}},
    \
    C \int f_0(x,v)\left(1+\frac{1}{2}|v|^2 + \Phi(x) + |x|^2\right)
    (1+t)^{-\beta/(1-\beta)}\right\}.\]
Similarly if $t \mapsto f_t$ is the solution to \eqref{LBEharris} in the whole
  space, satisfies \eqref{eq:Bsplits}
  \eqref{eq:b-hyp} and 
\[ x \cdot \nabla_x \Phi(x) \geq \gamma_1 \langle x \rangle^{1+\beta} + \gamma_2 \Phi(x) -A, \quad \Phi(x) \le \gamma_3 \langle x \rangle^{1+\beta},\]
for some positive constants $\gamma_1, \gamma_2, \gamma_3 A, \beta$, then there exists a constant $C>0$ such that 
\[ \|f_t-\mu\|_{\mathrm{TV}} \leq \min \left\{ \|f_0-\mu\|_{\mathrm{TV}},
    \
    C \int f_0(x,v)\left(1+\frac{1}{2}|v|^2 + \Phi(x) + |x| \right)
    (1+t)^{-\beta}\right\}.\]

\end{thm}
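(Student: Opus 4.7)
The plan is to prove Theorem \ref{thm:main-subgeometric} by invoking a \emph{subgeometric} version of Harris's theorem, in the spirit of Douc--Fort--Guillin and of the kinetic Fokker--Planck references cited just before the statement. As in the geometric case this requires two ingredients: a Foster--Lyapunov drift inequality $\mathcal{A}^{\ast}V \leq -\phi(V) + K\,\1_{C}$ with a \emph{concave} rate function $\phi$, together with a minorisation condition $P_{t_{0}}(z,\cdot) \geq \alpha\,\nu(\cdot)$ valid uniformly for $z$ in a sublevel set $C = \{V \leq R\}$. The minorisation is inherited without change from the arguments of Theorems \ref{thm:main-torus} and \ref{thm:main-confining}: those arguments rely only on short-time spreading by free transport combined with the smoothing of the collision operator in $v$, and are insensitive to how $\Phi$ behaves at infinity. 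The new work is therefore entirely in producing the correct subgeometric drift and reading off the rate from the abstract theorem.

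For equation \eqref{LRB} I would take
\[
V(x,v) = 1 + \tfrac{1}{2}|v|^{2} + \Phi(x) + |x|^{2} - \delta\,\chi(x)\cdot v,
\]
where $\chi$ is a smooth bounded vector field coinciding with $x$ near the origin (e.g.\ $\chi(x) = x/\langle x\rangle$ times a cut-off), and $\delta>0$ is small. A direct computation of the backward generator, using $\mathcal{L}^{+\ast}(\tfrac{1}{2}|v|^{2}) = \tfrac{d}{2} - \tfrac{1}{2}|v|^{2}$ for the velocity dissipation, and the Hamiltonian bracket of $-\delta\,\chi\cdot v$ to produce a positive contribution $\delta\,\nabla\Phi(x)\cdot\chi(x) + \text{l.o.t.}$, together with the hypothesis $x\cdot\nabla_{x}\Phi \geq \gamma_{1}\langle x\rangle^{2\beta} + \gamma_{2}\Phi - A$, yields, after absorbing cross terms into the velocity dissipation, the pointwise bound
\[
\mathcal{A}^{\ast}V \leq -c_{0}\bigl(\tfrac{1}{2}|v|^{2} + \langle x\rangle^{2\beta} + \Phi(x)\bigr) + K.
\]
Since $V\lesssim 1 + \tfrac{1}{2}|v|^{2} + \Phi(x) + \langle x\rangle^{2}$, this right-hand side is bounded above by $-c_{1}V^{\beta} + K_{1}$ outside a large enough sublevel set, giving the required drift inequality with the concave function $\phi(s) = c_{1}s^{\beta}$. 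The subgeometric Harris theorem then outputs convergence at rate $\phi(H_{\phi}^{-1}(t))^{-1}$ with $H_{\phi}(s)=\int_{1}^{s}du/\phi(u)\sim s^{1-\beta}$, which is precisely $(1+t)^{-\beta/(1-\beta)}$. The $\min$ with $\|f_{0}-\mu\|_{\mathrm{TV}}$ in the statement is automatic, since total variation is non-increasing along a stochastic semigroup.

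For equation \eqref{LBEharris} I would run the same scheme with the lighter weight $V(x,v) = 1 + \tfrac{1}{2}|v|^{2} + \Phi(x) + |x| - \delta\,\chi(x)\cdot v$, matched to the weaker growth $\Phi(x) \lesssim \langle x\rangle^{1+\beta}$ allowed by the hypothesis. The hard-potential factor $|v-v_{\ast}|^{\gamma}$ in \eqref{eq:Bsplits}, combined with \eqref{eq:b-hyp}, supplies a stronger velocity moment dissipation, which is what is needed to absorb the quadratic-in-$v$ errors produced by the bracket. The mixing term extracts a drift $\gtrsim \langle x\rangle^{\beta}$ in position (one power of $\langle x\rangle$ is lost because $\chi$ is bounded), and since now $V\lesssim\langle x\rangle^{1+\beta}$, the comparison becomes $\mathcal{A}^{\ast}V \leq -c_{1}V^{\beta/(1+\beta)} + K$. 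Applying subgeometric Harris with $\phi(s) = c\,s^{\beta/(1+\beta)}$ yields the advertised rate $(1+t)^{-\beta}$.

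The main obstacle I anticipate is the design of the mixing field $\chi$: on one hand, $\nabla\Phi\cdot\chi$ must be large enough to produce the $\langle x\rangle^{2\beta}$ (respectively $\langle x\rangle^{\beta}$) position drift, while on the other hand, the spurious terms generated by the bracket (quadratic in $v$ and involving $\nabla\chi$) must be controllable by the collisional velocity dissipation available in each model. Matching correctly the power of $V$ appearing in $\phi$ to the $\beta$-exponent demanded by the statement is what separates the two rates $\beta/(1-\beta)$ and $\beta$ in Theorem \ref{thm:main-subgeometric}; once the drift is in hand, the abstract machinery does the rest.
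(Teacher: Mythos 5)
Your overall strategy coincides with the paper's: inherit the minorisation condition from the exponential case (it only uses $\Phi\in\mathcal C^2$ with compact level sets, not its growth rate), establish a drift $\UU V\le -\phi(V)+K$ with a concave $\phi(s)\sim s^{\theta}$, and read the rate off the subgeometric Harris theorem via $H_\phi^{-1}$ and $\phi\circ H_\phi^{-1}$; your exponent bookkeeping ($\theta=\beta$ giving $(1+t)^{-\beta/(1-\beta)}$, $\theta=\beta/(1+\beta)$ giving $(1+t)^{-\beta}$) matches the paper, and your treatment of the linear Boltzmann case — bounded mixing field $x\cdot v/\langle x\rangle$, position weight of order $\langle x\rangle$, comparison $V\lesssim\langle x\rangle^{1+\beta}$ using $\Phi\le\gamma_3\langle x\rangle^{1+\beta}$ — is exactly Lemma \ref{lem:LBEwholespacesubgeometriclyapunov}.

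The gap is in your Lyapunov function for the linear relaxation Boltzmann equation. You keep the quadratic position weight $|x|^2$ in $V$ but take the mixing field $\chi$ \emph{bounded}. The position dissipation the transport bracket can then extract is only of size $\delta\,|\nabla\Phi(x)\cdot\chi(x)|\sim \delta\,x\cdot\nabla_x\Phi(x)/\langle x\rangle\gtrsim\langle x\rangle^{2\beta-1}$, one full power of $\langle x\rangle$ short of the $\langle x\rangle^{2\beta}$ you assert in your intermediate inequality; since your $V$ still contains $|x|^2$, you need the drift to dominate $V^\beta\gtrsim\langle x\rangle^{2\beta}$, and $\langle x\rangle^{2\beta-1}$ does not (for $\beta<1/2$ it even decays). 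The paper's fix is to use the \emph{unbounded} cross term: $V=1+\Phi(x)+\tfrac12|v|^2+\tfrac14 x\cdot v+\tfrac18|x|^2$, which is legitimate because the quadratic form $\tfrac12|v|^2+\tfrac14 x\cdot v+\tfrac18|x|^2$ is positive definite (no cut-off is needed for positivity), and then $\TT^*(x\cdot v)=|v|^2-x\cdot\nabla_x\Phi$ delivers the full $-\tfrac14\gamma_1\langle x\rangle^{2\beta}-\tfrac14\gamma_2\Phi$, after which subadditivity of $t\mapsto t^\beta$ closes $\UU V\le-\lambda V^\beta+C$. (There is also a sign slip: with $-\delta\,\chi\cdot v$ in $V$ the bracket contributes $+\delta\,\nabla\Phi\cdot\chi$ to $\UU V$, which points the wrong way for confinement; the cross term must enter with a $+$ sign.) The bounded mixing field is the right device only in the linear Boltzmann case, where the position weight is $\beta\langle x\rangle$ rather than $|x|^2$, so that the weaker drift $\langle x\rangle^{\beta}$ still dominates $V^{\beta/(1+\beta)}$.
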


\medskip

We carry out all of our proofs using variations of Harris's Theorem
from probability. Harris's Theorem originated in the paper \cite{H56}
where Harris gave conditions for existence and uniqueness of a steady
state for Markov processes. It was then pushed forward by Meyn and
Tweedie \cite{MT93} to show exponential convergence to
equilibrium. The last paper \cite{HM11} gives an efficient way of
getting quantitative rates for convergence to equilibrium once you
have quantitatively verified the assumptions, we use this version of
the result. Harris's Theorem says, broadly speaking, that if you have
a good confining property and some uniform mixing property in the
centre of the state space then you have exponentially fast convergence
to equilibrium in a weighted total variation norm. We give the precise
statement in the next section.

Harris's Theorem is based on the classical Doeblin's Theorem. The use
of such tools to analyse PDEs was first introduced in \cite{BL55,
  BL57} to study integro-differential equations for scatterers. These
results are very broad and include both models with non-equilibrium
steady states and spatially degenerate jump rates. Doeblin's Theorem
has also been used in \cite{BLP79} to show ergodicity for a kinetic
equation. In all these papers the authors do not seek to find explicit
rates. Harris's Theorem has already been used to show convergence to
equilibrium for some kinetic equations. In \cite{HMS02}, the authors
show convergence to equilibrium for the kinetic Fokker-Planck equation
with non-quantitative rates. In \cite{CELMM16}, the authors show
quantitative exponential convergence to a non-equilibrium steady state
for some non-linear kinetic equations on the torus using Doeblin's
Theorem.

This method is also applicable to some integro-PDEs describing several biological and
physical phenomena. In \cite{Gabriel18}, Doeblin's argument is used to
show exponential relaxation to equilibrium for the conservative
renewal equation which is a common model in population dynamics, often
referred as the McKendrick-von Foerster equation. In \cite{CY19}, the authors
show existence of a spectral gap property in the linear
(no-connectivity) setting for elapsed-time structured neuron networks
by using Doeblin's Theorem. Relaxation to the stationary state for the
original nonlinear equation is then proved by a perturbation argument where the 
non-linearity is weak. Moreover, in \cite{DG2017} the authors
consider a nonlinear model which is derived from mean-field
description of an excitatory network made up of leaky
integrate-and-fire neurons. In the case of weak connectivity, the
authors demonstrate the uniqueness of a stationary state and its
global exponential stability by using Doeblin's type of contraction argument 
for the linear case. Also in \cite{BCG2017}, the
authors extend similar ideas to obtain quantitative estimates in total
variation distance for positive semigroups, that can be
non-conservative and non-homogeneous. They provide a speed of
convergence for periodic semigroups and new bounds in the homogeneous
setting.

Using Harris's Theorem gives an alternative and very different
strategy for proving quantitative exponential decay to equilibrium. It
allows us to look at hypocoercive effects on the level of stochastic
processes and to look at specific trajectories which might allow one
to produce quantitative theorems based on more trajectorial
intuition. Another difference is that the confining behaviour is shown
here by exploiting good behaviour of moments rather than a
Poincar\'{e} inequality, this means looking at point wise bounds
rather than integral controls on the operator. These are often
equivalent for time reversible processes \cite{BCG08, CG14} and have
advantages and disadvantages. However, the condition on the moments
used here might be much easier to verify in the case where the
equilibrium state cannot be made explicit. This is the motivation
behind \cite{BL55, CELMM16}. These works also allow us to look at a
large class of initial data. We only need $f_0$ to be a probability
measure where $\|f_0 - \mu\|$ is finite. Harris's Theorem has a
restriction which is that we can only consider Markov processes. Many
kinetic equations are linear Markov processes but this excludes the
study of linearized non-linear equations which are not necessarily
mass preserving.

The plan of the paper is as follows. We introduce Harris's Theorem in
Section \ref{sec:Harris}. Then we have a section for each of our
equations where we prove our results.

\section{Harris's Theorem}
\label{sec:Harris}

Now let us be more specific about Harris's Theorem.  We give the
theorems and assumption as in the setting of \cite{HM11} where they
make it clear how the rates depend on those in the assumptions.
Markov operators can be defined by means of \emph{transition
  probability functions}. We always assume that
$(\Omega, \mathcal{S})$ is a measurable space. A function
$S \: \Omega \times \mathcal{S} \to \R$ is a transition probability
function on a finite measure space if $S(x, \cdot)$ is a probability
measure for every $x$ and $x \mapsto S(x, A)$ is a measurable function
for every $A \in \mathcal{S}$.  We can then define $\mathcal{P}$, the
associated stochastic operator on probability measures by
\[ \mathcal{P} \mu(\cdot) = \int_\Omega \mu(\mathrm{d}x) S(x, \cdot).\] 
In a similar way we can define the action of $S$ on functions (observables) by
\[ (\mathcal{P}^*\psi)(x) :=\int_\Omega \psi(y) S(x, \mathrm{d}y). \]
Since
we are looking at a process we have Markov transition kernel $S_t$ for
each $t>0$. We also define $\mathcal{P}_t$ from $S_t$ as above. In our
situation $\mathcal{P}_t\mu$ is the weak solution to the PDE with
initial data $\mu$. If we define $\mathcal{M}(\Omega)$ as the space of
finite measures on $(\Omega, \mathcal{S})$ then we have that
$\mathcal{P}_t$ is a \emph{linear} map
\[\mathcal{P}_t : \mathcal{M}(\Omega) \rightarrow
  \mathcal{M}(\Omega). \] From the conditions on $S_t$ we see that
$\mathcal{P}_t$ will be \emph{linear, mass preserving} and
\emph{positivity preserving}. 

We can define the \emph{forwards operator} $\UU$, associated
to $S_t$ as the operator which satisfies
\begin{equation}
  \label{eq:forwards_op}
  \left.\frac{\mathrm{d}}{\mathrm{d}t} \mathcal{P}^*_t
    \psi\right|_{t=0} = \UU\psi,
\end{equation}
for all $\psi \in C_c^\infty(\Omega)$.

We begin by looking at Doeblin's Theorem. Harris's Theorem is a natural successor to Doeblin's Theorem. Harris's and Doeblin's theorems are normally stated for a fixed time $t_*$. In our theorems we work to choose an appropriate $t_*$.
\begin{hyp}[Doeblin's Condition]
  \label{hyp:doeblin}
  We assume $(\mathcal{P}_{t})_{t \geq 0}$ is a stochastic semigroup, coming from
  a Markov transition kernel, and that there exists $t_* > 0$, a
  probability distribution $\nu$ and $\alpha \in (0,1)$ such that for
  any $z$ in the state space we have
  \[ \mathcal{P}_{t_*} \delta_z \geq \alpha \nu. \]
\end{hyp}

Using this we prove

\begin{thm}[Doeblin's Theorem]
  \label{thm:Doeblin}
  If we have a stochastic semigroup $(\mathcal{P}_{t})_{t \geq 0}$
  satisfying Doeblin's condition (Hypothesis \ref{hyp:doeblin}) then
  for any two measures $\mu_1$ and $\mu_2$ and any integer $n \geq 0$
  we have that
\begin{equation} \label{Doeblin1}
  \|\mathcal{P}_{t_*}^n\mu_1 - \mathcal{P}_{t_*}^n \mu_2 \|_{\mathrm{TV}} \leq (1-\alpha)^n \|\mu_1-\mu_2\|_{\mathrm{TV}}.
\end{equation} 
As a consequence, the semigroup has a unique equilibrium probability
measure $\mu_*$, and for all $\mu$
\begin{equation}
  \label{Doeblin2}
  \|\mathcal{P}_{t} (\mu -\mu_*)\|_{\mathrm{TV}}
  \leq \frac{1}{1-\alpha} e^{-\lambda t} \|\mu -
  \mu_*\|_{\mathrm{TV}},
  \qquad t \geq 0,
\end{equation}
where
\[ \lambda := \frac{\log(1-\alpha)}{t_*} >0 .\]
\end{thm}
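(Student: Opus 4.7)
The plan is to prove the contraction estimate \eqref{Doeblin1} first on a single application of $\mathcal{P}_{t_*}$ to a zero-mass signed measure, then iterate, and finally deduce existence, uniqueness of $\mu_*$ and the continuous-time bound \eqref{Doeblin2}.

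First, I would establish the one-step contraction: for any two probability measures $\mu_1,\mu_2$,
\[
 \|\mathcal{P}_{t_*}\mu_1 - \mathcal{P}_{t_*}\mu_2\|_{\mathrm{TV}} \leq (1-\alpha)\|\mu_1-\mu_2\|_{\mathrm{TV}}.
\]
The key observation is that Doeblin's condition $\mathcal{P}_{t_*}\delta_z \geq \alpha\nu$ integrates: for any probability measure $\eta$,
\[
 \mathcal{P}_{t_*}\eta \;=\; \int \mathcal{P}_{t_*}\delta_z\,\mathrm{d}\eta(z) \;\geq\; \alpha\nu.
\]
Using the Hahn decomposition, write $\mu_1-\mu_2=(\mu_1-\mu_2)^+-(\mu_1-\mu_2)^-$; since $\mu_1,\mu_2$ are probabilities, both parts have the same total mass $m=\tfrac12\|\mu_1-\mu_2\|_{\mathrm{TV}}$. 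If $m=0$ we are done; otherwise set $\eta_i:=(\mu_1-\mu_2)^{\pm}/m$, which are probability measures. Then $\mathcal{P}_{t_*}\eta_1-\alpha\nu$ and $\mathcal{P}_{t_*}\eta_2-\alpha\nu$ are nonnegative measures of mass $1-\alpha$, so
\[
 \|\mathcal{P}_{t_*}\mu_1 - \mathcal{P}_{t_*}\mu_2\|_{\mathrm{TV}} = m\,\|(\mathcal{P}_{t_*}\eta_1-\alpha\nu)-(\mathcal{P}_{t_*}\eta_2-\alpha\nu)\|_{\mathrm{TV}} \leq 2m(1-\alpha) = (1-\alpha)\|\mu_1-\mu_2\|_{\mathrm{TV}}.
\]
Iterating gives \eqref{Doeblin1}. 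This step, together with the clever use of $\mathcal{P}_{t_*}\eta\geq \alpha\nu$, is the main (though short) conceptual obstacle; the rest is bookkeeping.

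Next, existence and uniqueness of $\mu_*$ follow by a standard completeness argument. For any fixed probability measure $\mu$, the sequence $(\mathcal{P}_{t_*}^n\mu)_{n\geq 0}$ is Cauchy in total variation because
\[
 \|\mathcal{P}_{t_*}^{n+k}\mu-\mathcal{P}_{t_*}^n\mu\|_{\mathrm{TV}} \leq (1-\alpha)^n\|\mathcal{P}_{t_*}^k\mu-\mu\|_{\mathrm{TV}} \leq 2(1-\alpha)^n,
\]
and $\mathcal{M}(\Omega)$ is complete under $\|\cdot\|_{\mathrm{TV}}$; the limit $\mu_*$ is a probability measure (mass is preserved and positivity passes to the limit). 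Since $\mathcal{P}_{t_*}$ is $\|\cdot\|_{\mathrm{TV}}$-continuous, $\mathcal{P}_{t_*}\mu_*=\mu_*$; uniqueness is immediate from \eqref{Doeblin1}. To upgrade $\mu_*$ to a fixed point of $\mathcal{P}_t$ for every $t\geq 0$, apply $\mathcal{P}_t$ to $\mu_*=\mathcal{P}_{t_*}\mu_*$ and use the semigroup property: $\mathcal{P}_t\mu_*=\mathcal{P}_{t_*}(\mathcal{P}_t\mu_*)$, so $\mathcal{P}_t\mu_*$ is also a $\mathcal{P}_{t_*}$-invariant probability measure, hence $\mathcal{P}_t\mu_*=\mu_*$.

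Finally, for the continuous-time estimate \eqref{Doeblin2}, write $t=nt_*+s$ with $n\in\mathbb{N}$ and $s\in[0,t_*)$. Since $\mathcal{P}_s$ is a Markov operator it is a contraction in total variation on signed measures (this follows by the same Hahn-decomposition argument with $\alpha=0$), so
\[
 \|\mathcal{P}_t(\mu-\mu_*)\|_{\mathrm{TV}} = \|\mathcal{P}_{t_*}^n \mathcal{P}_s (\mu-\mu_*)\|_{\mathrm{TV}} \leq (1-\alpha)^n \|\mu-\mu_*\|_{\mathrm{TV}}.
\]
Using $n\geq t/t_*-1$ and $\lambda=-\log(1-\alpha)/t_*>0$ (the sign printed in the statement appears to be a typo), we get $(1-\alpha)^n \leq \tfrac{1}{1-\alpha}e^{-\lambda t}$, which is exactly \eqref{Doeblin2}. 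The only subtle point here is verifying the TV-contraction of $\mathcal{P}_s$ on zero-mass signed measures, which is where the Hahn decomposition argument from the first step is reused.
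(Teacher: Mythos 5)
Your proof is correct and follows essentially the same strategy as the paper: integrate the Doeblin lower bound over $\mu$, reduce via the Jordan/Hahn decomposition to mutually singular probability measures, use the triangle inequality and mass conservation to obtain the one-step contraction factor $1-\alpha$, then iterate and handle the remainder $t-\lfloor t/t_*\rfloor t_*$ by the plain TV-contractivity of the Markov semigroup. You are also right that the sign printed in the statement is a typo; the intended rate is $\lambda=-\log(1-\alpha)/t_*>0$.
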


\begin{proof}
  This proof is classical and can be found in various versions in
  \cite{HM11} and many other places.

Firstly we show that if $\mathcal{P}_t\delta_z \geq \alpha \nu$ for every $z$, then we also have $\mathcal{P}_t \mu \geq \alpha \nu$ for every probability measure $\mu$. Here since $\mathcal{P}_t$ comes from a Markov transition kernel we have 
\[ \mathcal{P}_t \delta_z (\cdot)= \int S_t(z', \cdot)\delta_z(\mathrm{d}z') = S_t(z,\cdot).   \] Therefore our condition says that
\[ S_t(z, \cdot) \geq \alpha \nu(\cdot) \] for every $z$. Therefore,
\[ \mathcal{P}_t \mu(\cdot) = \int S_t(z, \cdot)\mu(\mathrm{d}z) \geq \alpha \int \nu(\cdot)\mu(\mathrm{d}z) = \alpha \nu(\cdot). \]

By the triangle inequality we have
\[\| \mathcal{P}_{t_*} \mu_1 - \mathcal{P}_{t_*} \mu_2 \|_{\mathrm{TV}}
\leq
\| \mathcal{P}_{t_*} \mu_1 - \alpha \nu \|_{\mathrm{TV}}
+ \| \mathcal{P}_{t_*} \mu_2  - \alpha \nu \|_{\mathrm{TV}}. \]
Now, since $\mathcal{P}_{t_*} \mu_1 \geq \alpha \nu$, we can write
\[\| \mathcal{P}_{t_*} \mu_1 - \alpha \nu \|_{\mathrm{TV}}
=
\int (\mathcal{P}_{t_*} \mu_1 - \alpha \nu)
=
\int \mu_1 - \alpha
= 1-\alpha, \]
due to mass conservation, and similarly for the term
$\| \mathcal{P}_{t_*} \mu_2 - \alpha \nu \|_{\mathrm{TV}}$. This gives 
\[\| \mathcal{P}_{t_*} \mu_1 - \mathcal{P}_{t_*} \mu_2 \|_{\mathrm{TV}}
\leq
2(1-\alpha)
= (1-\alpha) \|\mu_1 - \mu_2\|_{\mathrm{TV}} \]
if $\mu_1, \mu_2 $ have disjoint support.  By
homogeneity, this inequality is obviously also true for any nonnegative $\mu_1, \mu_2$ having disjoint
support with $\int \mu_1 = \int \mu_2$. We obtain the inequality in
general for any $\mu_1, \mu_2$ with the same
integral by writing
$\mu_1 - \mu_2 = (\mu_1 - \mu_2)_+ - (\mu_2 - \mu_1)_+$, which is a difference of nonnegative measures with the same integral. This proves \begin{equation} \label{Doeblin-contractivity}
\|\mathcal{P}_{t_*}\mu_1 - \mathcal{P}_{t_*}\mu_2\|_{\mathrm{TV}} \leq (1-\alpha) \|\mu_1-\mu_2\|_{\mathrm{TV}}.
\end{equation} We then iterate this to obtain \eqref{Doeblin1}. 
The contractivity \eqref{Doeblin-contractivity} shows that the operator
$\mathcal{P}_{t_*}$ has a unique fixed point, which we
call $\mu_*$. In fact, $\mu_*$ is a stationary state of the whole semigroup since for all $s \geq 0$ we have
\[\mathcal{P}_{t_*} \mathcal{P}_{s} \mu_* = \mathcal{P}_{s} \mathcal{P}_{t_*} \mu_* = \mathcal{P}_{s} \mu_*, \]
which shows that $ \mathcal{P}_{s} \mu_*$ (which is again a probability measure) is also a stationary state of $\mathcal{P}_{t_*}$; due to uniqueness,
\[ \mathcal{P}_{s} \mu_* = \mu_*.\]
Hence the only stationary state of $\mathcal{P}_{t}$ must be
$\mu_*$, since any stationary state of $\mathcal{P}_{t}$ is in particular a stationary state of $\mathcal{P}_{t_*}$. 

In order to show \eqref{Doeblin2}, for any probability measure $\mu$ and any $t \geq 0$ we write
\[k := \left \lfloor{\frac{t}{t_*}} \right \rfloor, \]
(where $\lfloor \cdot \rfloor$ denotes the integer part) so that
\[\frac{t}{t_*} - 1 < k \leq \frac{t}{t_*}. \]
Then,
\begin{multline*}
\| \mathcal{P}_{t} (\mu - \mu_*) \|_{\mathrm{TV}}
= \| \mathcal{P}_{t - k t_*} \mathcal{P}_{k
	t_*} (\mu-\mu_*) \|_{\mathrm{TV}}
\leq
\| \mathcal{P}_{k t_*} (\mu-\mu_*) \|_{\mathrm{TV}}
\\
\leq
(1-\alpha)^{k} \| \mu-\mu_* \|_{\mathrm{TV}}
\leq
\frac{1}{1-\alpha}
\exp \left( t \log \left ( \frac{1-\alpha}{t_*} \right ) \right) \| \mu-\mu_* \|_{\mathrm{TV}}.
\qedhere
\end{multline*}
\end{proof}
Harris's Theorem extends this to the setting where we cannot prove
minorisation uniformly on the whole of the state space. The idea is to
use the argument given above on the centre of the state space then
exploit the Lyapunov structure to show that any stochastic process
will return to the centre infinitely often.

We make two assumptions on the behaviour of $\mathcal{P}_{t_*}$, for some fixed $t_*$:
\begin{hyp} [Lyapunov condition]\label{confinement}
There exists some function $V \hspace{2pt}:\hspace{2pt} \Omega \rightarrow [0, \infty)$ and constants $D \geq 0, \alpha \in (0,1)$ such that
\[ (\mathcal{P}^*_{t_*}V)(z) \leq \alpha V(z) + D. \]
\end{hyp}

\begin{remark}
  We use the name Lyapunov condition as it is the standard name used
  for this condition in probability literature. However, we should
  stress this condition is not related to the Lyapunov method for
  proving convergence to equilibrium. We do not prove monotonicity of
  a functional.
\end{remark}

\begin{remark}
  \label{rem:Lyapunov-equivalent}
  In our situation, we have an equation on the law
  $f(t) \equiv \PP_t f_0$. This is equivalent to the statement
  \begin{equation}
    \label{eq:step-Lyap}
    \int_{\Omega} f(t,z) V(z) \mathrm{d}z \leq \alpha \int_{\Omega} f_0(z) V(z)
    \mathrm{d}z + D.
  \end{equation}
  We normally verify this by showing that
  \[ \frac{\mathrm{d}}{\mathrm{d}t} \int_{\Omega} f(t,z)V(z) \mathrm{d}z \leq -
    \lambda \int_{\Omega} f(t,z)V(z) \mathrm{d}z +  K,\] for some positive
  constants $K$ and $\lambda$, which then implies \eqref{eq:step-Lyap}
  for $\alpha = e^{-\lambda t}$ and $D = \frac{K}{\lambda}(1 -
  e^{-\lambda t}) \leq K t$. Equivalently, one can show that
  \begin{equation*}
    \UU V \leq - \lambda V + K,
  \end{equation*}
  where $\UU$ is the forwards operator defined in
  \eqref{eq:forwards_op}.
\end{remark}

The idea behind verifying the Lyapunov structure in our case comes
from \cite{HMS02} where they use similar Lyapunov structures for the
kinetic Fokker-Planck equation. When we work on the torus the Lyapunov
structure is only needed in the $v$ variable and the result is purely
about how moments in $v$ are affected by the collision operator.

The next assumption is a minorisation condition as in Doeblin's
Theorem
\begin{hyp}
  \label{minorisation1}
  There exists a probability measure $\nu$ and a constant
  $\beta \in (0,1)$ such that
  \[ \inf_{z \in \mathcal{C}} \mathcal{P}_{t_*}\delta_z \geq \beta \nu, \] where
  \[ \mathcal{C} =  \{ z \: V(z) \leq R \} \]
  for some $R > 2D/(1-\alpha)$.
\end{hyp}

\begin{remark}
  Production of quantitative lower bounds as a way to quantify the
  \emph{positivity} of a solution has been proved and used in kinetic
  theory before. For example it is an assumption required for the
  works of Desvillettes and Villani \cite{DV01, DV05}. Such lower
  bounds have been proved for the non-linear Boltzmann equation in
  \cite{M05, B15, B15b}.
\end{remark}

This second assumption is more challenging to verify in our
situations. Here we use a strategy based on our observation about how
noise is transferred from the $v$ to the $x$ variable as described
earlier. The actual calculations are based on the PDE governing the
evolution and iteratively using Duhamel's formula.

We define a weighted total variation norm on measures for each $a$ by:
\[\|\mu_1- \mu_2\|_{V,a} = \int ( 1+ a V(z)) |\mu_1 - \mu_2|( \mathrm{d}z). \]
\begin{thm}[Harris's Theorem as in \cite{HM11}] \label{harris}
If Hypotheses \ref{confinement} and \ref{minorisation1} hold then there exist $\bar{\alpha} \in (0,1)$ and $a >0$ such that
\begin{equation}
\| \mathcal{P}_{t_*}\mu_1- \mathcal{P}_{t_*}\mu_2\|_{V,a} \leq \bar{\alpha} \|\mu_1- \mu_2\|_{V,a}.
\end{equation}
 Explicitly if we choose $\beta_0 \in (0, \beta)$ and $\alpha_0 \in ( \alpha + 2D/R, 1)$ then we can set $ \gamma = \beta_0/K$ and $ \bar{\alpha} = \max \left \{ 1-(\beta - \beta_0), (2 + R \gamma \alpha_0)/(2+R \gamma)\right\}.$
\end{thm}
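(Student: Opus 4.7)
The plan is to follow the Hairer–Mattingly argument: show that the weighted norm $\|\cdot\|_{V,a}$ contracts under $\PP_{t_*}$ by combining the Lyapunov bound (which shrinks the $V$-weighted part of the norm whenever we are far from the small set $\CC$) with the minorisation bound (which shrinks the total variation part whenever we are close to $\CC$). The weight $a$ is the tuning parameter that balances these two regimes.

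First, I would reduce to the case of two probability measures $\mu_1,\mu_2$ with mutually singular supports. Given arbitrary probability measures, let $\pi := \mu_1\wedge\mu_2$ denote their common part and write $\mu_i = \pi + \mu_i'$; since $\mu_1-\mu_2 = \mu_1'-\mu_2'$ and $\PP_{t_*}$ is linear, all norms in question are unchanged by this decomposition, and by homogeneity I may rescale to assume $\mu_1',\mu_2'$ are probability measures (the case $\pi = \mu_1 = \mu_2$ being trivial). On this reduced pair, $\|\mu_1-\mu_2\|_{V,a} = 2 + a M$ where $M := \int V\,d\mu_1 + \int V\,d\mu_2$. Applying the integrated form of Hypothesis~\ref{confinement} gives
\[
 \int V\,d(\PP_{t_*}\mu_1) + \int V\,d(\PP_{t_*}\mu_2) \leq \alpha M + 2D.
\]

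Second, I would exploit the minorisation: Hypothesis~\ref{minorisation1} says $S_{t_*}(z,\cdot)\geq \beta\nu$ for $z\in\CC$, so integrating against each $\mu_i$ yields $\PP_{t_*}\mu_i \geq \beta\,\mu_i(\CC)\,\nu$. Setting $m := \min(\mu_1(\CC),\mu_2(\CC))$, the measures $\PP_{t_*}\mu_i - \beta m\nu$ are nonnegative with total mass $1-\beta m$, and subtracting this common part from both sides gives
\[
 \|\PP_{t_*}\mu_1-\PP_{t_*}\mu_2\|_{V,a} \leq 2(1-\beta m) + a\bigl(\alpha M + 2D\bigr),
\]
after dropping the harmless nonnegative term $2a\beta m \int V\,d\nu$. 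A Markov-type inequality $\mu_i(\CC^c)\leq \int V\,d\mu_i / R$ controls $m$ from below by $1 - M/R$ when $M < R$.

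The final step is to show that for an appropriate choice of $a$ the right-hand side is bounded by $\bar\alpha(2+aM)$ for some $\bar\alpha<1$, uniformly in $M\geq 0$. I would split into $M\geq R$ (throw away the minorisation term, use only Lyapunov; here the standing assumption $R > 2D/(1-\alpha)$ makes $\alpha + \tfrac{2(1-\alpha)+2aD}{2+aR} < 1$ for any $a>0$) and $M\leq R$ (keep both terms, which produces the two bounds $1-\beta+aD$ and $\alpha + 2\beta/(aR)$ competing in $a$). The admissible window $2D/(R(1-\alpha)) < a < \beta/D$ is nonempty precisely under the same condition $R > 2D/(1-\alpha)$, and any $a$ in it gives both bounds $<1$, yielding an explicit $\bar\alpha$ of the form stated, with the parameters $\beta_0 \in (0,\beta)$ and $\alpha_0 \in (\alpha+2D/R,1)$ parametrising the slack in each inequality. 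The main obstacle, beyond the bookkeeping, is picking $a$ correctly: since the two contraction mechanisms pull in opposite directions in $a$, one must verify that the window of admissible weights is nonempty, and this is exactly where the assumption $R > 2D/(1-\alpha)$ is essential rather than merely convenient.
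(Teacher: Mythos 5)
The paper does not actually prove this theorem: it is stated as a quotation from Hairer--Mattingly \cite{HM11} and used as a black box, so there is no in-paper argument to compare against. Your proposal is a correct, self-contained proof along the standard ``primal'' lines: reduce to mutually singular probability measures, note $\|\mu_1-\mu_2\|_{V,a}=2+aM$, subtract the common minorising part $\beta m\nu$ after one step, bound the $V$-mass by the integrated Lyapunov estimate, and tune $a$ so that each mechanism (total-variation loss near $\mathcal{C}$, $V$-contraction away from $\mathcal{C}$) absorbs the other's overhead. This is the measure-level analogue of the dual Lipschitz-seminorm argument actually used in \cite{HM11}, and it is stylistically the natural extension of the paper's own proof of Doeblin's Theorem (Theorem \ref{thm:Doeblin}); each route is fine, and yours has the advantage of making visible exactly where $R>2D/(1-\alpha)$ enters.

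One quantitative slip worth fixing. In the regime $M\le R$, your slope comparison requires $\bar\alpha a\ge 2\beta/R+a\alpha$, i.e.\ $a>2\beta/(R(1-\alpha))$, not $a>2D/(R(1-\alpha))$ as written; with your window as stated, nonemptiness would need $2D^2<\beta R(1-\alpha)$, which does not follow from $R>2D/(1-\alpha)$ alone. Either correct the lower endpoint (nonemptiness of $\bigl(2\beta/(R(1-\alpha)),\,\beta/D\bigr)$ is then exactly equivalent to $R>2D/(1-\alpha)$), or avoid the lower bound entirely: both sides of the target inequality are affine in $M$ on $[0,R]$, so it suffices to check $M=0$ (giving $\bar\alpha\ge 1-\beta+aD$, hence only $a<\beta/D$) and $M=R$, where the minorisation term vanishes and the condition $\alpha+\frac{2(1-\alpha)+2aD}{2+aR}<1$ holds for every $a>0$ under the standing assumption. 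Finally, you do not actually derive the explicit $\bar\alpha$ in the statement (with $\gamma=\beta_0/K$, where the theorem's $K$ is evidently the paper's $D$); that requires carrying the slack parameters $\beta_0,\alpha_0$ through the two endpoint inequalities, but it is pure bookkeeping on top of your argument.
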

\begin{remark}
We have that
\[ \min \{ 1, a\} \|\mu_1- \mu_2\|_{V,1} \leq \|\mu_1- \mu_2\|_{V,a} \leq \max \{ 1, a \} \|\mu_1- \mu_2\|_{V,1}.\]
 We can also iterate Theorem \ref{harris} to get
\[ \| \mathcal{P}_{nt_*} \mu_1- \mathcal{P}_{nt_*} \mu_2\|_{V,a} \leq \bar{\alpha}^n \| \mu_1- \mu_2\|_{V,a}. \] Therefore we have that
\[  \| \mathcal{P}_{nt_*} \mu_1- \mathcal{P}_{nt_*} \mu_2\|_{V,1} \leq \bar{\alpha}^n \frac{\max \{ 1, a \} }{ \min \{ 1, a \}} \| \mu_1- \mu_2\|_{V,1}.\]
\end{remark}

\begin{remark}
In this paper we always consider functions $V$ where $V(z) \rightarrow \infty$ as $|z|\rightarrow \infty$. In this case, we can replace $\mathcal{C}$ in Hypothesis \ref{minorisation1} with some ball of radius $R'$ which will contain $\mathcal{C}$.
\end{remark}

Doeblin's Theorem corresponds to the irreducibility property for
Markov processes in the bounded state space. But when the state space
is unbounded it is expected that the process may drift arbitrarily far
away and we cannot prove a uniform minorisation condition on the whole
state space. Harris's Theorem is one way to extend the ideas of
Doeblin to the unbounded state space by finding a Lyapunov functional
with small level sets to show that transition probabilities of the
process converge towards a unique invariant measure. Therefore
Harris's Theorem is based on providing a combination of a minorisation
and a geometric drift conditions. The minorisation condition can be
thought as finding a bound on the probability of transitioning in one
step from any initial state to some specified region which corresponds
to the small level sets of the Lyapunov functional.

Another way to prove convergence towards a unique invariant measure
for a Markov process is to show the Markov semigroup has the
\emph{strong Feller property}, meaning that the semigroup maps bounded
measurable functions to bounded continuous functions. We refer the
reader to \cite{H10} for further comments on this.

There are versions of Harris's Theorem adapted to weaker Lyapunov
conditions which give subgeometric convergence \cite{DFG09}. We use
the following theorem which can be found in Section 4 of \cite{H10}.
\begin{thm}[Subgeometric Harris's Theorem]
  Given the forwards operator, $\UU$, of our Markov semigroup
  $\mathcal{P}$, suppose that there exists a continuous function $V$
  valued in $[1,\infty)$ with pre compact level sets such that
  \[ \UU V \leq K - \phi(V), \] for some constant $K$ and some
  strictly concave function
  $\phi: \mathbb{R}_+ \rightarrow \mathbb{R}$ with $\phi(0) = 0$ and
  increasing to infinity. Assume that for every $C >0$ we have the
  minorisation condition like Hypothesis \ref{minorisation1}. i.e. for
  some $t_*$ a time and $\nu$ a probability distribution and
  $\alpha \in (0,1)$, then for all $z$ with $V(z) \leq C$:
  \[ \mathcal{P}_{t_*} \delta_z \geq \alpha \nu. \] With these
  conditions we have that
  \begin{itemize}
  \item There exists a unique invariant measure $\mu_*$ for the Markov
    process and it satisfies
    \[ \int \phi(V(z)) \mathrm{d}\mu_* \leq K. \]
    
  \item Let $H_\phi$ be the function defined by 
    \[ H_\phi = \int_1^u \frac{\mathrm{d}s}{\phi(s)}.\] Then there
    exists a constant $C$ such that
    \[ \|\mathcal{P}_t \mu - \mu_* \|_{\mathrm{TV}}
      \leq
      \frac{C\mu(V)}{H_\phi^{-1}(t)} + \frac{C}{(\phi \circ
        H_\phi^{-1})(t)}
    \]
    holds for every $\mu$ where $\mu(V) = \int V(z) \mu(\mathrm{d}z)$.
  \end{itemize}
\end{thm}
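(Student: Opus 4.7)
The plan is to follow the Douc--Fort--Moulines/Hairer strategy which reduces the subgeometric case to the classical geometric Harris Theorem \ref{harris} via a one-parameter family of interpolated Lyapunov functions, and then optimizes the parameter as a function of $t$.

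First I would establish existence, uniqueness, and the moment bound for the invariant measure $\mu_*$. Uniqueness is essentially automatic: since the level sets of $V$ are precompact and the minorisation applies on each of them, any two invariant probability measures assign positive mass to a common sublevel set $\{V \leq C\}$ (because integrating the drift inequality shows $\mu(V) < \infty$, hence $\mu(\{V \leq C\}) > 1/2$ for $C$ large), and a Doeblin-type coupling on that set then forces $\mu_1 = \mu_2$. For existence, start from $\delta_{z_0}$ and iterate $\mathcal{P}_{t_*}$; the drift $\mathcal{U} V \leq K - \phi(V)$ combined with precompactness of level sets makes the sequence of Cesàro means tight, and any accumulation point is invariant. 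The bound $\int \phi(V)\,\mathrm{d}\mu_* \leq K$ then follows by integrating the drift inequality against $\mu_*$ (after truncating $V$ to ensure $\int \mathcal{U} V\,\mathrm{d}\mu_* = 0$ by stationarity).

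For the rate, I would build for each $\theta \in (0,1]$ a companion Lyapunov function $V_\theta$ satisfying a classical geometric drift $\mathcal{U} V_\theta \leq -\gamma_\theta V_\theta + D_\theta$, at the price of $\gamma_\theta \to 0$ and $D_\theta \to \infty$ as $\theta \to 0$. The natural choice exploits the concavity of $\phi$: for instance $V_\theta \sim V/(1+\theta H_\phi(V))$ or a closely related functional of $V$, for which the concavity of $\phi$ precisely converts the subgeometric drift on $V$ into a geometric drift on $V_\theta$. The minorisation hypothesis on sublevel sets of $V$ transfers quantitatively to sublevel sets of $V_\theta$, so Theorem \ref{harris} yields an exponential contraction $\bar\alpha_\theta^{n}$ in the $V_\theta$-weighted total variation norm, with $\bar\alpha_\theta$ and the weight comparison constants tracked explicitly in $\theta$.

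The stated rates then emerge by balancing $\theta$ against $t$. The pathwise intuition is that $V$ decays along trajectories roughly like the solution of the ODE $u'(t) = -\phi(u(t))$, i.e.\ $u(t) = H_\phi^{-1}(H_\phi(u(0)) - t)$, which is why $H_\phi^{-1}(t)$ sets the natural scale. Choosing $\theta$ of order $1/H_\phi^{-1}(t)$ and substituting into the weighted contraction produces the two contributions: $C\mu(V)/H_\phi^{-1}(t)$ from the comparison between $V_\theta$ and $V$, and $C/(\phi\circ H_\phi^{-1})(t)$ from the exponential rate $\bar\alpha_\theta^{t/t_*}$. The main obstacle is precisely this construction and bookkeeping: designing $V_\theta$ so that concavity of $\phi$ turns the subgeometric drift into a geometric one, ensuring its level sets are contained in level sets of $V$ where the minorisation applies, and tracking the dependence of $\gamma_\theta, D_\theta, \bar\alpha_\theta$ explicitly enough that the final optimization reproduces the claimed rates with the stated constants.
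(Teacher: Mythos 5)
The paper does not actually prove this statement: it is imported verbatim from Section~4 of \cite{H10}, so there is no in-paper argument to compare yours against. Your first bullet (Krylov--Bogoliubov existence, uniqueness via mutual singularity of distinct ergodic invariant measures versus the minorisation on a common sublevel set, and the moment bound by integrating the truncated drift against $\mu_*$) is correct and standard.

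The rate argument, however, has a genuine gap at its core. No stationary Lyapunov function of the form $V_\theta = G_\theta(V)$, comparable to (a multiple of) $V$ from above, can satisfy a geometric drift $\UU V_\theta \le -\gamma_\theta V_\theta + D_\theta$ together with a minorisation on its sublevel sets when the original drift is genuinely subgeometric: if it did, Theorem~\ref{harris} would give $\|\mathcal{P}_t\mu-\mu_*\|_{\mathrm{TV}} \le C_\theta e^{-\lambda_\theta t}(1+\mu(V))$ for each fixed $\theta$, i.e.\ \emph{exponential} convergence for all data with finite $V$-moment, which is false for standard examples satisfying only $\UU V \le K - V^\beta$, $\beta<1$. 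One can see the obstruction directly: to push the drift through the generator one needs $G_\theta$ concave (Jensen), whence $\UU G_\theta(V) \le G_\theta'(V)\bigl(K-\phi(V)\bigr)$, and the requirement $G_\theta'(v)\phi(v) \ge \gamma G_\theta(v)$ forces $G_\theta(v) \ge c\,e^{\gamma H_\phi(v)}$, which is \emph{convex} for large $v$ whenever $\phi$ is sublinear ($\phi'\to 0$) --- a contradiction. Your test case $V_\theta \sim V/(1+\theta H_\phi(V))$ fails concretely for $\phi(v)=\sqrt v$: there $V_\theta \sim c_\theta\sqrt V$ and $\UU\sqrt V \le \tfrac{K}{2}V^{-1/2}-\tfrac12$, which is bounded but never $\le -\gamma\sqrt V + D$. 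Note also that in the hypothesis the minorisation constants $\alpha, t_*, \nu$ are allowed to depend on the sublevel set $\{V\le C\}$, so even a degenerate family $(\gamma_\theta, D_\theta)$ would drag uncontrolled constants $\alpha(C_\theta)$ into the final bound. The correct implementations of the interpolation idea (Douc--Fort--Moulines--Soulier, \cite{DFG09}, and the proof in \cite{H10}) do not produce stationary geometric drifts; they either work with \emph{time-dependent} comparison functions of the type $H_\phi^{-1}(H_\phi(V)-ct)$ --- exploiting the ODE comparison $\frac{\mathrm d}{\mathrm dt}\,\mathbb{E}V(X_t)\le K-\phi(\mathbb{E}V(X_t))$, which is where concavity of $\phi$ genuinely enters --- or with $\phi\circ H_\phi^{-1}$-modulated moments of return times to a fixed small set, combined with a coupling over successive returns; the two terms $C\mu(V)/H_\phi^{-1}(t)$ and $C/(\phi\circ H_\phi^{-1})(t)$ come from splitting the time interval between the moment-decay phase and the coupling phase, not from optimizing a geometric contraction rate in $\theta$.
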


We will apply the subgeometric Harris's Theorem to the PDEs we study
to show convergence when only a weaker confinement condition is
available.

\section{The linear relaxation Boltzmann equation}

\subsection{On the torus}

This is the simplest operator on the torus, so we do not in fact need
to use Harris's Theorem. We can instead use Doeblin's Theorem where we
have a uniform minorisation condition.

We consider
\begin{equation}
  \label{eq:LBGK}
  \p_t f + v \cdot \nabla_xf = \mathcal{L}f,
\end{equation}
posed for $(x,v) \in \T^d \times \R^d$, where $\T^d$ is the
$d$-dimensional torus of side $1$ and
\begin{equation}
  \label{eq:L-LBGK}
  \mathcal{L}f (x,v) := \mathcal{L}^+ f(x,v) - f(x,v)
  := \left( \ird f(x,u) \d u \right) \mathcal{M}(v) - f(x,v),
\end{equation}
which is a well defined operator from $L^1(\T^d \times \R^d)$ to
$L^1(\T^d \times \R^d)$, and can also be defined as an operator from
$\M(\T^d \times \R^d)$ to $\M(\T^d \times \R^d)$ with the same
expression (where $\ird f(x,u) \d u$ now denotes the marginal of the
measure $f$ with respect to $u$). We define $(T_t)_{t \geq 0}$ as the
transport semigroup associated to the operator $-v \cdot \nabla_x f$
in the space of measures with the bounded Lipschitz topology (see for
example \cite{CCC13}); that is, $t \mapsto T_t f_0$ solves the
equation $\p_t f + v \cdot \nabla_x f = 0$ with initial condition
$f_0$. In this case one can write $T_t$ explicitly as
\begin{equation}
  \label{eq:T-LBGK}
  T_t f_0(x,v) = f_0(x - tv, v).
\end{equation}
Using Duhamel's formula repeatedly one can obtain that, if $f$ is a
solution of \eqref{eq:LBGK} with initial data $f_0$, then
\begin{equation}
  \label{eq:Duhamel}
  e^t f_t \geq \int_0^t \int_0^s T_{t-s} \mathcal{L}^+ T_{s-r} \mathcal{L}^+ T_r f_0
  \d r \d s.
\end{equation}

We will now check two properties, which we list as lemmas. The first
one says that the operator $\mathcal{L}$ always allows jumps to any small
velocity. We always use the notation $\1_A$ to denote the characteristic
function of a set $A$ (if $A$ is a set), or the function which is $1$
where the condition $A$ is met, and $0$ otherwise (if $A$ is a
condition).
\begin{lem}
  \label{lem:HL}
  For all $\delta_L > 0$ there exists $\alpha_L > 0$ such that for all
 $g \in \mathcal{P}(\mathbb{T}^d \times \mathbb{R}^d)$ we have
  \begin{equation}
    \label{eq:HL}
    \mathcal{L}^+ g(x,v) \geq
    \alpha_L \left( \ird g(x,u) \d u \right) \1_{\{|v| \leq \delta_L\}}
  \end{equation}
  for almost all $(x, v) \in \T^d \times \R^d$.
\end{lem}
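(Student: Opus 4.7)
The plan is to observe that this lemma is essentially a direct consequence of the definition of $\mathcal{L}^+$ together with the strict positivity of the Maxwellian $\mathcal{M}$ on any bounded set. The key observation is that $\mathcal{L}^+ g(x,v)$ factors as $\rho_g(x)\,\mathcal{M}(v)$, where $\rho_g(x) := \int_{\R^d} g(x,u)\,\d u$ is the spatial marginal. Since the condition on $v$ appearing on the right-hand side of \eqref{eq:HL} involves only the ball $\{|v| \leq \delta_L\}$, we only need a uniform positive lower bound for $\mathcal{M}$ on this ball.

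Concretely, I would fix $\delta_L > 0$ and note that on $\{|v| \leq \delta_L\}$ we have $|v|^2/2 \leq \delta_L^2/2$, so
\[
  \mathcal{M}(v) = (2\pi)^{-d/2} e^{-|v|^2/2} \geq (2\pi)^{-d/2} e^{-\delta_L^2/2}.
\]
Setting $\alpha_L := (2\pi)^{-d/2} e^{-\delta_L^2/2}$, for $(x,v)$ with $|v|\leq \delta_L$ the desired estimate becomes
\[
  \mathcal{L}^+ g(x,v) = \rho_g(x)\,\mathcal{M}(v) \geq \alpha_L\, \rho_g(x) = \alpha_L \rho_g(x)\,\1_{\{|v|\leq \delta_L\}}.
\]
For $|v| > \delta_L$ the right-hand side of \eqref{eq:HL} is zero, while the left-hand side is always nonnegative (recall $g \geq 0$ since $g$ is a probability measure), so the inequality holds trivially.

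There is essentially no obstacle here: the result reduces to a quantitative lower bound for a smooth positive density on a ball. The content of the lemma is simply to record the explicit constant $\alpha_L$, which will later play the role of a uniform jump-rate to small velocities when producing a minorisation in the proof of the Doeblin condition. I do not need any assumption on $g$ beyond nonnegativity and measurability in $u$ so that the marginal $\rho_g(x)$ is well defined, which is guaranteed for $g \in \PP(\T^d \times \R^d)$ by Fubini's theorem.
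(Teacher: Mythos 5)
Your proof is correct and follows exactly the paper's (one-line) argument: the constant $\alpha_L := (2\pi)^{-d/2}e^{-\delta_L^2/2}$ you choose is precisely $\mathcal{M}(v)$ evaluated at $|v| = \delta_L$, which is what the paper takes. The extra remarks about the case $|v| > \delta_L$ and the well-definedness of the marginal are harmless elaborations of the same idea.
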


\begin{proof}
  Given any $\delta_L$ it is enough to choose $\alpha_L := \M(v)$ for
  any $v$ with $|v| = \delta_L$.
\end{proof}

The second one is regarding to the behaviour of the transport part alone. It
says that if we start at any point inside a ball of radius $R$, and we
are allowed to start with any small velocity, then we can reach any
point in the ball of radius $R$ with a predetermined bound on the
final velocity. We use $B(\delta)$
  to denote the open ball $\{x \in \R^d \mid |x| < \delta\}$, and in
  general we will use the notation $B(z,\delta)$ to denote the open ball of
  radius $\delta$ centered at $z \in \R^d$.The lemma is:
\begin{lem}
  \label{lem:HT}
  Given any time $t_0 > 0$ and radius $R > 0$ there exist $\delta_L, R'
  > 0$ such that for all $t \geq t_0$ it holds that
  \begin{equation}
    \label{eq:HT}
    \int_{B(R')} T_{t} \Big(\delta_{x_0}(x) \1_{\{|v| \leq \delta_L\}}\Big) \d v
    \geq
    \frac{1}{t^d} \1_{\{|x| \leq R \}}
    \qquad \text{for all $x_0$ with $|x_0| < R$.}
  \end{equation}
  In particular, if we take $R > \sqrt{d}$, there exist $\delta_L, R'
  > 0$ such that
  \begin{equation}
    \label{eq:HT*}
    \int_{B(R')} T_{t} \Big(\delta_{x_0}(x) \1_{\{|v| \leq \delta_L\}}\Big) \d v
    \geq
    \frac{1}{t^d}
    \qquad \text{for all $x_0 \in \T^d$.}
  \end{equation}
\end{lem}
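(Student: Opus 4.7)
My plan is to compute the left-hand side of \eqref{eq:HT} explicitly using the transport formula \eqref{eq:T-LBGK}, reducing the inequality to a Jacobian calculation combined with a simple covering argument on the torus. I will choose $\delta_L$ and $R'$ to be any constants at least $\sqrt{d}/(2t_0)$, so that $t\delta_L \geq \sqrt{d}/2$ whenever $t \geq t_0$. The key geometric fact is that the ball $B(\sqrt{d}/2) \subset \R^d$ already contains the closed cube $[-1/2,1/2]^d$, which is a fundamental domain for $\T^d = \R^d/\Z^d$.

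Since $T_t$ is the pushforward of a measure by $(x,v) \mapsto ((x+tv)\bmod \Z^d,\,v)$, and since I take $R' \geq \delta_L$, the measure
\[
  g \;:=\; \int_{B(R')} T_t\bigl(\delta_{x_0}(x)\,\mathbf{1}_{\{|v|\leq \delta_L\}}\bigr)\,dv
\]
on $\T^d$ is exactly the pushforward of $\mathbf{1}_{B(\delta_L)}(v)\,dv$ under $v \mapsto (x_0 + tv) \bmod \Z^d$. A change of variables $y = tv$ produces the Jacobian factor $1/t^d$ and shows that $g$ is absolutely continuous with respect to Lebesgue measure on $\T^d$, with density
\[
  g(x) \;=\; \frac{1}{t^d}\,\#\bigl\{k \in \Z^d \;:\; |x - x_0 + k| \leq t\delta_L\bigr\},
\]
where $x - x_0$ denotes any fixed representative in $\R^d$.

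The last step is to check that this cardinality is at least $1$ for every $x \in \T^d$: given any representative of $x - x_0$ in $\R^d$, there exists an integer vector $k$ placing $x - x_0 + k$ inside $[-1/2,1/2]^d$, whose Euclidean norm is at most $\sqrt{d}/2 \leq t\delta_L$ by our choice of $\delta_L$. Consequently $g(x) \geq 1/t^d$ almost everywhere on $\T^d$, which is stronger than \eqref{eq:HT}. The second statement \eqref{eq:HT*} follows immediately: the hypothesis $R > \sqrt{d}$ ensures $|x| \leq R$ for every representative in $[0,1)^d$ of a point of $\T^d$, so the indicator $\mathbf{1}_{\{|x|\leq R\}}$ equals $1$ on the whole torus and the restriction $|x_0| < R$ may be dropped. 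The argument is essentially a one-line change of variables; the only mild subtlety is to carry the torus identification through the pushforward onto $\T^d$ and to interpret the conclusion as an inequality of measures on $\T^d$ rather than of pointwise functions — neither of which poses a real obstacle.
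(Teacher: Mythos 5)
Your proof is correct. You arrive at the same conclusion as the paper, but by a genuinely different path: you lift the torus computation to an explicit density formula on $\T^d$, expressing $g(x) = t^{-d}\,\#\{k\in\Z^d : |x-x_0+k|\leq t\delta_L\}$ and using that the closed ball of radius $\sqrt d/2$ contains the fundamental cube $[-1/2,1/2]^d$. The paper instead works as if in $\R^d$: after the same change of variables $y=x-tv$, it bounds the indicator $\1_{B(\delta_L)}((x-y)/t)$ from below by $\1_{B(R)}(x)\1_{B(R)}(y)$ via the triangle inequality, and uses $B(x,tR')\supseteq B(R)$ for $|x|<R$; it then deduces \eqref{eq:HT*} as the special case $R>\sqrt d$. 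The two differ in emphasis: your argument makes the torus wrap-around explicit and directly yields the stronger uniform bound $g\geq t^{-d}$ everywhere (so your constants $\delta_L,R'\gtrsim\sqrt d/t_0$ depend only on $d,t_0$, not on $R$), whereas the paper's calculation is genuinely an $\R^d$ statement that is then read on the torus, and its constants $\delta_L,R'>2R/t_0$ scale with $R$. The paper's $\R^d$ formulation also matches the pattern of the analogous lemma with a confining potential (Lemma \ref{lem:HTshooting}), which is why the two-step structure \eqref{eq:HT}--\eqref{eq:HT*} is retained there. One small thing worth making explicit in your write-up: the condition $R'\geq\delta_L$ is not automatic from ``both at least $\sqrt d/(2t_0)$'' and must be imposed so that the $v$-integral over $B(R')$ captures all of $B(\delta_L)$; you do state it, but it should be stated as a constraint rather than a consequence.
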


\begin{proof}
  Take $t, R > 0$. We have
  \[ T_t \left(\delta_{x_0}(x) \1_{B(\delta_L)}(v) \right) =
    \delta_{x_0}(x-vt) \1_{B(\delta_L)}(v). \]   Integrating this and
  changing variables gives that
  \[ \int_{B(R')} T_t \left(
      \delta_{x_0}(x) \1_{B(\delta_L)}(v)
    \right)  \d v =
    \frac{1}{t^d}
    \int_{B(x, t R')} \delta_{x_0}(y) \1_{B(\delta_L)}\left( \frac{x-y}{t}
    \right) \mathrm{d}y.\]
  Since $|x-y| \leq |x|+|y|$ we have that
  \[ \1_{B(\delta_L)}\left( \frac{x-y}{t} \right)
    \geq \1_{B(\delta_L/2)}\left(\frac{x}{t}\right)\, \1_{B(\delta_L/2)}\left(\frac{y}{t}\right).\]
  Therefore if we take $\delta_L > 2R/t$ we have
  \[ \1_{B(\delta_L)}\left( \frac{x-y}{t} \right) \geq \1_{B(R)}(x)
    \1_{B(R)}(y).\]
  On the other hand, if we take $|x| < R$ and $R' > 2 R / t$ then
  \begin{equation*}
    B(x, t R') \supseteq B(x, 2R) \supseteq B(R).
  \end{equation*}
  Hence if $\delta_L > 2R / t$ and $R' > 2R / t$,
  \[ \int_{B(R')} T_t \left(\delta_{x_0}(x) \1_{B(\delta_L)}(v) \right) \d v \geq
    \frac{1}{t^d} \1_{B(R)}(x), \]
  which proves the result.
\end{proof}

\begin{lem}[Doeblin condition for the linear relaxation Botzmann equation on the torus]
  \label{lem:Doeblin_LBGK}
  For any $t_* > 0$ there exist constants $\alpha, \delta_L > 0$
  (depending on $t_*$) such that any solution $f$ to equation
  \eqref{eq:LBGK} with initial condition
  $f_0 \in \P(\T^d \times \R^d)$ satisfies
  \begin{equation}
    \label{eq:Doeblin_LBGK}
    f(t_*, x, v) \geq \alpha \1_{\{|v| \leq \delta_L\}},
  \end{equation}
  where the inequality is understood in the sense of measures.
\end{lem}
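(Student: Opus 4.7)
The plan is to apply the iterated Duhamel lower bound \eqref{eq:Duhamel}, which bounds $e^{t_*}f_{t_*}$ from below by a double time integral of $T_{t_*-s}\mathcal{L}^+ T_{s-r}\mathcal{L}^+ T_r f_0$. Qualitatively, the first $\mathcal{L}^+$ concentrates the mass of $f_0$ in a small ball of velocities (Lemma \ref{lem:HL}); the intermediate free transport $T_{s-r}$ spreads that piece spatially so that its $x$-marginal covers the whole torus (Lemma \ref{lem:HT}); the second $\mathcal{L}^+$ then converts this uniform $x$-mass back into a Maxwellian profile in $v$, which trivially dominates an indicator. The outer transport $T_{t_*-s}$ will be harmless because the eventual lower bound depends on $v$ only.

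Concretely, fix $t_0 \in (0,t_*/2)$ and apply Lemma \ref{lem:HT} with some $R>\sqrt{d}$ to produce parameters $\delta_L, R' >0$ such that \eqref{eq:HT*} holds for every $t \geq t_0$. Lemma \ref{lem:HL} applied with the same $\delta_L$ then supplies $\alpha_L>0$ with
\[
\mathcal{L}^+ T_r f_0(x,v) \geq \alpha_L\,\1_{\{|v|\leq \delta_L\}}\, \rho_1(x),
\qquad \rho_1(x):=\int_{\R^d}(T_r f_0)(x,u)\,\d u,
\]
and $\rho_1$ is a probability measure on $\T^d$. Transport by $T_{s-r}$ only shifts the $x$ variable, so a further application of $\mathcal{L}^+$ gives
\[
\mathcal{L}^+ T_{s-r}\mathcal{L}^+ T_r f_0(x,v)
\geq \M(v)\,\alpha_L \int_{\R^d}\1_{\{|u|\leq \delta_L\}}\,\rho_1\bigl(x-(s-r)u\bigr)\,\d u.
\]
Writing $\rho_1$ as an average of Dirac masses and invoking \eqref{eq:HT*} on the inner integral, which is legitimate as soon as $s-r\geq t_0$, yields the uniform bound
\[
\mathcal{L}^+ T_{s-r}\mathcal{L}^+ T_r f_0(x,v)\geq \frac{\alpha_L}{(s-r)^d}\,\M(v),\qquad x\in\T^d,\ v\in\R^d,
\]
and since the right-hand side depends on $v$ only, it is preserved by $T_{t_*-s}$.

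Inserting this into \eqref{eq:Duhamel} and restricting the double integral to $\{(r,s)\,:\,t_0\leq s\leq t_*,\ 0\leq r\leq s-t_0\}$ so that $s-r\geq t_0$ throughout, one obtains
\[
e^{t_*}f_{t_*}(x,v)\geq \alpha_L\,\M(v)\int_{t_0}^{t_*}\!\int_0^{s-t_0}\frac{\d r\,\d s}{(s-r)^d} =: C\,\M(v),
\]
with a constant $C>0$ independent of $f_0$. Since $\M$ is radial and decreasing, $\M(v)\geq \M(\delta_L)\,\1_{\{|v|\leq \delta_L\}}$, so setting $\alpha:=e^{-t_*}C\,\M(\delta_L)$ gives exactly \eqref{eq:Doeblin_LBGK}.

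The main technical point is the compatibility of the two lower-bound lemmas: Lemma \ref{lem:HT} requires the intermediate time $s-r$ to lie above the threshold $t_0$ that was used to fix $\delta_L$ and $R'$, so keeping $t_0$ strictly positive removes a slice of the $(r,s)$ region. This only results in a smaller but still positive constant $C$, after which passing from the Maxwellian to an indicator on a small velocity ball is immediate.
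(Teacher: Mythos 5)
Your proof is correct and follows essentially the same strategy as the paper: bound $e^{t_*}f_{t_*}$ by the iterated Duhamel formula, apply Lemma~\ref{lem:HL} to get a velocity ball after the first jump, use Lemma~\ref{lem:HT} to spread the mass over the torus under the intermediate free transport, apply $\mathcal{L}^+$ once more to regenerate a Maxwellian in $v$, and note that the final transport $T_{t_*-s}$ leaves the $x$-independent lower bound untouched. The only cosmetic differences from the paper's writeup are that you work with a general $\rho_1$ and disintegrate it into Dirac masses at the point where Lemma~\ref{lem:HT} is invoked rather than reducing to $f_0=\delta_{(x_0,v_0)}$ at the outset, and that your integration region $\{t_0\le s\le t_*,\ 0\le r\le s-t_0\}$ is slightly larger than the paper's rectangle; both are valid and give a positive constant.
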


\begin{proof}
  It is enough to prove it for $f_0 := \delta_{(x_0,v_0)}$, where
  $(x_0,v_0) \in \T^d \times \R^d$ is an arbitrary point. From Lemma
  \ref{lem:HT} (with $R > \sqrt{d}$ and $t_0 := t_*/3$) we will use
  that there exists $\delta_L > 0$ such that
  \begin{equation*}
    \ird T_{t} \Big(\delta_{x_0}(x) \1_{\{|v| \leq \delta_L\}}\Big) \d v
    \geq
    \frac{1}{t^d}
    \qquad \text{for all $x_0 \in \T^d$, $t > t_0$}.
  \end{equation*}
  Also, Lemma \ref{lem:HL} gives an $\alpha_L > 0$ such that
  \begin{equation*}
    \mathcal{L}^+ g \geq \alpha_L
    \left( \ird g(x,u) \d u\right) \, \1_{\{|v| \leq \delta_L\}}.
  \end{equation*}
  Take any $r > 0$. Since $T_r f_0 = \delta_{(x_0 - v_0 r, v_0)}$,
  using this shows
  \begin{equation*}
    \mathcal{L}^+ T_r f_0 \geq
    \alpha_L \,
    \delta_{x_0 - v_0 r} (x)\, \1_{\{|v| \leq \delta_L\}}.
  \end{equation*}
  Hence, whenever
  $s-r > t_0$ we have
  \begin{align*}
    \mathcal{L}^+ T_{s-r} \mathcal{L}^+ T_r f_0
    &\geq
      \alpha_L
      \left(
      \ird T_{s-r} \mathcal{L}^+ T_r f_0 \d u
      \right)
      \1_{\{|v| \leq \delta_L\}}
    \\
    &\geq
      \alpha_L^2 \,
      \left(
      \ird T_{s-r} \big(
      \delta_{x_0 - v_0 r} (x) \, \1_{\{|u| \leq \delta_L\}}
      \big) \d u
      \right)
      \1_{\{|v| \leq \delta_L\}}
    \\
    &\geq
      \frac{1}{(s-r)^d} \alpha_L^2 \,
      \, \1_{\{|v| \leq \delta_L\}}.
  \end{align*}
  Finally, for the movement along the flow
  $T_{t-s}$, notice that
  \[ T_{t} \Big( \1_{\T^d} (x) \1_{\{|v| < \delta_L\}}(v) \Big) =
    \1_{\T^d}(x) \1_{\{|v| < \delta_L\}}(v)
  \qquad \text{for all $t \geq 0$}.\]
  This means that for
  all $t > s > r > 0$ such that $s-r > t_0$ we have
  \[  T_{t-s}\mathcal{L}^+ T_{s-r} \mathcal{L}^+ T_r f_0 \geq
    \frac{1}{(s-r)^d} \alpha_L^2
    \, \1_{\{|v| \leq \delta_L\}}. \]
  For any $t_*$ we have then, recalling that $t_0 = t_* / 3$,
  \begin{multline*}
    \int_{0}^{t_*} \int_0^{s} T_{t_*-s}\mathcal{L}^+T_{s-r}\mathcal{L}^+ T_r f_0 \d r \d s
    \geq
    \alpha_L^2 \, \1_{\{|v| \leq \delta_L\}}
    \int_{2 t_0}^{t_*} \int_0^{t_0}
    \frac{1}{(s-r)^d} \d r \d s
    \\
    \geq
    \frac{t_0^2}{t_*^d} \alpha_L^2 \1_{\{|v| \leq \delta_L\}}
    =
    \frac{1}{9} t_*^{2-d} \alpha_L^2 \1_{\{|v| \leq \delta_L\}}.
  \end{multline*}
  Finally, from Duhamel's formula \eqref{eq:Duhamel} we obtain
  \begin{equation*}
    f(t_*,x,v)
    \geq  \frac{1}{9} e^{-t_*} t_*^{2-d} \alpha_L^2
    \1_{\{|v| \leq \delta_L\}},
  \end{equation*}
  which gives the result.
\end{proof}

\begin{proof}[Proof of Theorem \ref{thm:main-torus} in the case of the linear relaxation Boltzmann equation]
Lemma \ref{lem:Doeblin_LBGK} allows us to apply directly Doeblin's
Theorem \ref{thm:Doeblin} to obtain fast exponential convergence to
equilibrium in the total variation distance. This rate is also explicitly
calculable. Therefore, the proof follows.
\end{proof}

\subsection{On the whole space with a confining potential}

Now we consider the equation

\begin{equation}
  \label{eq:LBGKc}
  \p_t f + v \cdot \nabla_xf - \nabla_x \Phi(x) \cdot \nabla_v f = \mathcal{L}f,
\end{equation}
where $\mathcal{L}$ is defined as in the previous section and
$x, v \in \mathbb{R}^d$.
We want to use a slightly different strategy to show the minorisation
condition based on the fact that we instantaneously produce large
velocities. We first need a result on the trajectories of particles
under the action of the potential $\Phi$. Always assuming that $\Phi$
is a $\mathcal{C}^2$ function, we consider the characteristic ordinary
differential equations associated to the transport part of
\eqref{eq:LBGKc}:
\begin{equation}
  \label{eq:ODE1}
  \begin{aligned}
    &\dot{x} = v
    \\
    &\dot{v} = - \nabla \Phi(x),
  \end{aligned}
\end{equation}
and we denote by $(X_t(x_0,v_0), V_t(x_0,v_0))$ the solution at time
$t$ to \eqref{eq:ODE1} with initial data $x(0) = x_0$, $v(0) =
v_0$. Performing time integration twice, it clearly satisfies
\begin{equation}
\label{1}
X_t(x_0, v_0)
= x_0 + v_0 t
+ \int_0^t \int_0^s \nabla \Phi(X_u(x_0, v_0)) \d u \d s
\end{equation}
for any $x_0, v_0 \in \R^d$ and any $t$ for which it is
defined. Intuitively the idea is that for small times we can
approximate $(X_t, V_t)$ by $(X^{(0)}_t, V^{(0)}_t)$ which is a
solution to the ordinary differential equation
\begin{equation}
  \label{eq:ODE2}
  \begin{aligned}
    &\dot{x} = v
    \\
    &\dot{v} = 0,
  \end{aligned}
\end{equation}
whose explicit solution is $(X_t^{(0)}, V_t^{(0)}  )= (x_0 + v_0 t, v_0)$.
If we want to hit a point $x_1$ in time $t$ then if
we travel with the trajectory $X^{(0)}$ we just need to choose
$v_0 = (x_1-x_0)/t$. Now we choose an interpolation between
$(X^{(0)}, V^{(0)})$ and $(X,V)$. We denote it by
$(X^{(\epsilon)}, V^{(\epsilon)})$ which is a solution to the ordinary
differential equation

\begin{equation}
  \label{eq:ODE3}
  \begin{aligned}
    &\dot{x} = v
    \\
    &\dot{v} = -\epsilon^2 \nabla \Phi (x),
  \end{aligned}
\end{equation}
still with initial data $(x_0, v_0)$. We calculate that
\[ X^{(\epsilon)}_t(x_0, v_0) = X_{\epsilon t}\left( x_0,
    \frac{v_0}{\epsilon} \right), \qquad V^{(\epsilon)}_t (x_0, v_0) =
  \epsilon V_{\epsilon t}\left( x_0, \frac{v_0}{\epsilon} \right). \]
Now we can see from the ODE representation (and we will make this more
precise later) that $(X,V)$ is a $C^1$ map of $(t, \epsilon, x,
v)$. Therefore if we fix $t$ and $x_0$ we can define a $C^1$ map
\[ F \: [0,1] \times \mathbb{R}^d \rightarrow \mathbb{R}^d, \] by
\[ F(\epsilon, v) = X^{(\epsilon)}_t (x_0, v). \] Then for
$\epsilon = 0$ we can find $v_*$ such that $F(0, v_*) = x_1$ as given
above. Furthermore $\nabla F(0, v_*) \neq 0$ so by the implicit
function theorem for all $\epsilon$ less than some $\epsilon_*$ we
have a $C^1$ function $v(\epsilon)$ such that
$F(\epsilon, v(\epsilon)) = x_1$. This means that
\[X_{\epsilon t} \left(x_0, \frac{v(\epsilon)}{\epsilon} \right) = x_1
  .\] So if we take $s < \epsilon_* t$ then we can choose $v$ such
that $X_s(x_0, v) = x_1$. We now need to get quantitative estimates on
$\epsilon_*$, and we do this by tracking the constants in the proof of
the contraction mapping theorem.

In order to make these ideas quantitative and to check that the
solution is in fact $C^1$ we need to get bounds on $(X_t, V_t)$ and
$\nabla \Phi (X_t)$ for $t$ is some fixed intervals. For the
potentials of interest we will have that the solutions to these ODEs
will exist for infinite time. We prove bounds on the solutions and
$\nabla \Phi (X_t)$ for any potential:

\begin{lem}
  \label{lem:Xt-bound}
  Assume that the potential $\Phi$ is $\mathcal{C}^2$ in $\R^d$. Take
  $\lambda > 1$, $R > 0$ and $x_0, v_0 \in \R^d$ with $|x_0| \leq
  R$. The solution $t \mapsto X_t(x_0,v_0)$ to \eqref{eq:ODE1} is
  defined (at least) for $|t| \leq T$, with
  \begin{equation*}
    T := \min\left\{ \frac{(\lambda-1) R}{2 |v_0|},
      \frac{\sqrt{(\lambda-1) R}}{\sqrt{2 C_{\lambda R}}}  \right\},
    \qquad
    C_{\lambda R} := \max_{|x| \leq  \lambda R} |\nabla \Phi(x)|.
  \end{equation*}
  (It is understood that any term in the above minimum is $+\infty$ if
  the denominator is $0$.) Also, it holds that
  \begin{equation*}
    |X_t(x_0, v_0)| \leq \lambda R
    \qquad \text{for $|t| \leq T$.}
  \end{equation*}
  from this we can deduce
  \begin{equation*}
    |V_t(x_0, v_0)| \leq |v_0| +C_{\lambda R} t
        \qquad \text{for $|t| \leq T$.}
  \end{equation*}
\end{lem}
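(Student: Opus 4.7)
My plan is a standard continuation (bootstrap) argument based on the integral identity \eqref{1}. Set
\[ T^\star := \sup\{\tau > 0 : t \mapsto X_t(x_0,v_0) \text{ is defined on } [0,\tau] \text{ and } |X_t(x_0,v_0)| \leq \lambda R \text{ for all } t \in [0,\tau] \}. \]
The goal is to show $T^\star \geq T$; the velocity bound will then follow immediately by integrating the second equation in \eqref{eq:ODE1}.

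First I would note that $T^\star > 0$, by standard local existence for the smooth ODE system \eqref{eq:ODE1} combined with the strict inequality $|x_0| \leq R < \lambda R$ and continuity of $t \mapsto X_t$. Second, on $[0,T^\star)$ the integrand of \eqref{1} is bounded by $C_{\lambda R}$, so
\[ |X_t(x_0,v_0)| \leq R + |v_0|\, t + \frac{C_{\lambda R}}{2}\, t^2. \]
The definition of $T$ is tuned precisely so that on $[0,T]$ the two summands on the right are bounded by $(\lambda-1)R/2$ and $(\lambda-1)R/4$ respectively, yielding the strict bound $|X_t(x_0,v_0)| \leq R + 3(\lambda-1)R/4 < \lambda R$ on $[0,\min(T,T^\star))$. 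Third, I would argue by contradiction that $T^\star \geq T$: if $T^\star < T$ were finite, continuity of $X_t$ would force $|X_{T^\star}(x_0,v_0)| = \lambda R$, contradicting the strict inequality just obtained. Finite-time blow-up of the ODE flow on $[0,T^\star)$ is ruled out because the bound on $|X_t|$ forces $|V_t| \leq |v_0| + C_{\lambda R}\, t$ via the integrated form of $\dot V_t = -\nabla \Phi(X_t)$, so the solution extends past $T^\star$. This same integral formula, now restricted to $|t| \leq T$, also yields the advertised bound on $V_t$.

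For negative times, the involution $t \mapsto (X_{-t}, -V_{-t})$ turns a solution of \eqref{eq:ODE1} with initial data $(x_0, v_0)$ into a solution with initial data $(x_0, -v_0)$, to which the argument above applies verbatim on $[-T, 0]$. I do not expect a serious obstacle here: the only delicate point is avoiding circularity in the bootstrap, namely establishing simultaneously that the solution exists on all of $[0,T]$ and that $|X_t|$ stays below $\lambda R$. This is handled cleanly by encoding both properties into the single quantity $T^\star$ and by ruling out finite-time blow-up on $[0, T^\star)$ via the position--velocity control indicated above.
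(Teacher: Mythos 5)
Your proof is correct and uses essentially the same idea as the paper's: bound $|X_t|$ via the integral identity \eqref{1} with $|\nabla\Phi| \leq C_{\lambda R}$ on the relevant time interval, and observe that the definition of $T$ keeps the right-hand side strictly below $\lambda R$, which simultaneously rules out blow-up and establishes the stated bound. The paper phrases this as a ``first time $t_0$ at which $|X_{t_0}| = \lambda R$'' contradiction, while you phrase it as an explicit bootstrap with $T^\star$ and a continuation argument; these are the same mechanism, and your handling of negative times via the time-reversal involution and of the velocity bound by integrating $\dot V$ matches what the paper implicitly does.
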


\begin{proof}
  By standard ODE theory, the solution is defined in some maximal
  (open) time interval $I$ containing $0$; if this maximal interval
  has any finite endpoint $t_*$, then $X_t(x_0,v_0)$ has to blow up as
  $t$ approaches $t_*$. Hence if the statement is not satisfied, there
  must exist $t \in I$ with $|t| \leq T$ such that
  $|X_t(x_0,v_0)| \geq \lambda R$. By continuity, one may take
  $t_0 \in I$ to be the ``smallest'' time when this happens: that is,
  $|t_0| \leq T$ and
  \begin{gather*}
    X_{t_0}(x_0,v_0) = \lambda R,
    \\
    |X_{t_0}(x_0,v_0)| \leq \lambda R
    \qquad \text{for $|t| \leq |t_0|$}.
  \end{gather*}
  By \eqref{1} and using that $|t_0| \leq T$ we have
  \begin{align*}
    \lambda R = |X_{t_0}(x_0, v_0)|
    &\leq |x_0 |
    + |v_0 t_0| + \frac {t_0^2} 2 \max \{|\nabla \Phi(X_t(x_0, v_0))|
    \colon
    t \le t_0\}
    \\
    &\leq R
    + \frac{(\lambda - 1) R}{2}
      + \frac {C_{\lambda R}}{2} t_0^2
      =
      \frac{(\lambda + 1) R}{2} + \frac {C_{\lambda R}}{2} t_0^2,
  \end{align*}
  which implies that
  \begin{equation*}
    (\lambda-1) R \leq C_{\lambda R} t_0^2.
  \end{equation*}
  If $C_{\lambda R}= 0$ this is false; if $C_{ \lambda R} > 0$, then this
  contradicts with that $|t_0| \leq T$.
\end{proof}

We now follow the intuition given at the beginning of this section. However we collapse the variables $\epsilon$ and $t$ together and consequently look at $X_t\left (x, \frac{v}{t}\right )$ which is intuitively less clear but algebraically simpler.
\begin{lem}
  \label{lem:shooting}
  Assume that $\Phi \in \mathcal{C}^2(\R^d)$, and take
  $x_0, x_1 \in \R^d$. Let $R := \max\{ |x_0|, |x_1| \}$. There exists
  $0 < t_1 = t_1(R)$ such that for any $t \le t_1$ we can find a
  $|v_0| \le 4R$ such that \[X_t \left ( x_0, \frac {v_0} {t} \right)= x_1.\] In fact,
  it is enough to take $t_1 > 0$ such that
  \begin{equation*}
    C t_1^2 e^{C t_1^2 } \le \frac 1 4,
    \qquad t_1 \leq \frac{\sqrt{R}}{\sqrt{2 C_{2R}}},
    \quad \quad
    t_1\leq \frac{2 \sqrt{R}}{\sqrt{C_{9R}}},
  \end{equation*}
  where \[C := \sup_{|x| \leq 9R} |D^2 \Phi (x)| \], $C_{\lambda R}$
  is defined in Lemma \ref{lem:Xt-bound} and $D^2 \Phi$ denotes the
  Hessian matrix of $\Phi$.
\end{lem}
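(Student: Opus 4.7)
The plan is to recast the shooting equation $X_t(x_0, v_0/t) = x_1$ as a fixed-point problem for $v_0$. Using the integral identity~\eqref{1} with initial velocity $v_0/t$,
\[ X_t(x_0, v_0/t) = x_0 + v_0 + \Psi(t, v_0), \qquad \Psi(t, v_0) := \int_0^t\!\!\int_0^s \nabla \Phi(X_u(x_0, v_0/t))\,\d u\,\d s, \]
the equation becomes $v_0 = (x_1 - x_0) - \Psi(t, v_0)$. Since $|x_1 - x_0| \leq 2R$, I look for a fixed point of $F_t(v_0) := (x_1 - x_0) - \Psi(t, v_0)$ in the closed ball $\overline{B}(x_1 - x_0, 2R) \subset \overline{B}(0, 4R)$, which would immediately yield $|v_0| \leq 4R$ as required.

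The first step is to ensure $F_t$ is well defined on $\overline{B}(0, 4R)$ and self-maps the smaller ball. For $|v_0| \leq 4R$ the initial velocity $v_0/t$ has size $\leq 4R/t$, and applying Lemma~\ref{lem:Xt-bound} with $\lambda = 9$ the first condition $(\lambda - 1)R/(2|v_0/t|) \geq t$ reduces to the tautology $t \geq t$, while the second becomes $t \leq 2\sqrt{R/C_{9R}}$. Under this bound the trajectory stays in $\{|x| \leq 9R\}$ on $[0,t]$, which gives $|\nabla\Phi(X_u)| \leq C_{9R}$ and therefore $|\Psi(t, v_0)| \leq C_{9R}\, t^2 / 2 \leq 2R$, so $F_t$ maps $\overline{B}(x_1 - x_0, 2R)$ into itself. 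The stated condition $t_1 \leq \sqrt{R/(2 C_{2R})}$ comes from a sharper version of this estimate, valid in a narrower tube around the straight line from $x_0$ to $x_1$, on which $|\nabla\Phi|$ is controlled by the tighter constant $C_{2R}$.

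The core step is the contraction estimate. Given two initial velocities $v_0, v_0' \in \overline{B}(0, 4R)$ with corresponding trajectories $X_u, X_u'$, both confined to $\{|x| \leq 9R\}$, subtracting the integral forms produces
\[ |X_u - X_u'| \leq \frac{|v_0 - v_0'|}{t}\,u + C \int_0^u (u - s)\,|X_s - X_s'|\,\d s, \qquad C := \sup_{|x| \leq 9R}|D^2\Phi(x)|, \]
and a Gronwall-type comparison with the linear ODE $y'' = C y$ yields $|X_u - X_u'| \leq (|v_0 - v_0'|/t)\,u\,e^{C u^2}$. Inserting this bound into the definition of $\Psi$ gives
\[ |\Psi(t, v_0) - \Psi(t, v_0')| \leq C\, t^2\, e^{C t^2}\,|v_0 - v_0'|, \]
so the assumption $C t_1^2 e^{C t_1^2} \leq 1/4$ forces $F_t$ to be a strict contraction with ratio at most $1/4$. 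Banach's fixed-point theorem then delivers a unique $v_0$ with $X_t(x_0, v_0/t) = x_1$ and $|v_0| \leq 4R$.

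The main technical obstacle is the singular rescaling $v_0/t$: as $t \to 0$ the actual initial velocity blows up, yet the exact cancellation $(v_0/t)\,t = v_0$ keeps the displacement of order $R$, so all estimates must be carried out in the rescaled variable $v_0$. Lemma~\ref{lem:Xt-bound} is tailored exactly for this regime, providing the uniform pointwise control of $\nabla \Phi$ and $D^2 \Phi$ along the trajectory that both the self-map and contraction steps require. The three separate conditions on $t_1$ in the statement then correspond to (i) the contraction factor being $<1$, (ii) the self-map bound $|\Psi|\leq 2R$ in its sharper form using $C_{2R}$, and (iii) confinement of the trajectory inside the ball on which $C$ and $C_{9R}$ are defined.
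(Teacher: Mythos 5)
Your proof is correct and follows the same broad strategy as the paper: recast the shooting problem as a fixed-point equation in the rescaled velocity $v_0$, obtain a Lipschitz bound of ratio $\leq 1/4$ via a Gronwall argument, establish invariance of a ball of radius $\leq 4R$, and invoke Banach's fixed-point theorem. Your map $F_t(v_0) = (x_1 - x_0) - \Psi(t,v_0)$ is in fact identical to the paper's $A_t(v) = v - f(t,v)$, and your Gronwall step (comparison with $y'' = Cy$, giving $u e^{Cu^2}$) is an equivalent variant of the paper's first-order Gronwall on $|X_u(x_0,\tilde v_1)-X_u(x_0,\tilde v_0)|$.

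The genuinely different step is the self-map argument, and yours is cleaner. You bound $|\Psi(t,v_0)| \leq C_{9R}\, t^2/2 \leq 2R$ directly from the condition $t_1 \leq 2\sqrt{R/C_{9R}}$, which shows $F_t$ maps $\overline B(x_1-x_0,2R)$ into itself; this does not use the condition $t_1 \leq \sqrt{R/(2C_{2R})}$ at all. The paper instead applies its Lipschitz estimate at $v_1=0$, $v_0=v$ to get $|A_t(v)| \leq \tfrac14|v| + |x_1| + |X_t(x_0,0)|$ and then invokes Lemma~\ref{lem:Xt-bound} with $\lambda = 2$ to bound $|X_t(x_0,0)| \leq 2R$; it is this last step, and only this step, that uses $t_1 \leq \sqrt{R/(2C_{2R})}$. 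Your remark that the $C_{2R}$ condition ``comes from a sharper version\ldots{} in a narrower tube around the straight line from $x_0$ to $x_1$'' is therefore not an accurate account of where that condition arises: there is no tube argument in the paper, and your own proof renders that condition superfluous. This is a misstatement in the expository commentary rather than a gap in the argument, but you should either omit the $C_{2R}$ condition (since you don't need it) or correctly attribute it to the paper's alternative self-map bound via $|X_t(x_0,0)|$.
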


\begin{proof}
  We define
  \begin{gather*}
    f(t, v) = X_t \left( x_0, \frac v t \right) - x_1,
    \qquad \text{$t \neq 0$, $v \in \R^d$,}
    \\
    f(0,v) := x_0 + v - x_1,
    \qquad \text{$v \in \R^d$.}
  \end{gather*}
  Notice that due to Lemma \ref{lem:Xt-bound} with $\lambda = 9$, this
  is well-defined whenever
  \begin{equation*}
    |t| \leq \frac{2 \sqrt{R}}{\sqrt{C_{9R}}} =: t_2,
    \quad
    |v| \leq 4 R.
  \end{equation*}
  Our goal is to find a neighbourhood of $t=0$ on which there exists
  $v = v(t)$ with $f(t,v(t)) = 0$, for which we will use the implicit
  function theorem.

  Now, notice that we have
  \[f(0, x_1 - x_0) =0\]
  and
  \begin{equation*}
    \frac {\partial f} {\partial {v_i}} (0, x_1-x_0) = 1,
    \hspace{7pt} i= 1, \dots,d.
  \end{equation*}
  We can apply the implicit function theorem to find a
  neighbourhood $I$ of $t=0$ and a function $v= v(t)$ such that
  $f(t,v(t)) = 0$ for $t \in I$. However, since we need to estimate the size of
  $I$ and of $v(t)$, we carry out a constructive proof.

  Take $v_0, v_1 \in \R^d$ with $|v_0|, |v_1| \leq 4 R$, and denote
  $\tilde{v}_0 := v_0 / t$, $\tilde{v}_1 := v_1 / t$. By \eqref{1},
  for all $0 < t \leq t_2$ we have
  \begin{equation}
    \label{eq:Xtdif}
    X_t(x_0, \tilde{v}_1 )  - X_t(x_0, {\tilde{v}_0} )
    =  (\tilde v_1 -\tilde v_0) t
    + \int_0^t \int_0^s \nabla \Phi(X_u(x_0, {\tilde v_1}  ))
    - \nabla \Phi(X_u(x_0 , \tilde v_0 ))      \d u \d s.
  \end{equation}
  Take any $t_1 \leq t_2$, to be fixed later. Then Lemma
  \ref{lem:Xt-bound} implies, for all $0 \leq t \leq t_1$,
  \begin{equation*}
    |X_t(x_0, {\tilde{v}_1} )  - X_t(x_0, {\tilde{v}_0} ) |
    \le  |\tilde{v}_1 -\tilde{v}_0| t + C t_1 \int_0^t
    |X_u(x_0, {\tilde{v}_1}  ) - X_u(x_0, \tilde{v}_0 )   |   \d u.
  \end{equation*}
  by Gronwall's Lemma we have
  \begin{equation*}
    |X_t(x_0, {\tilde{v}_1} )  - X_t(x_0, {\tilde{v}_0} ) |
    \le  |\tilde{v}_1 -\tilde{v}_0| t e^{C t_1 t}
    \qquad \text{for $0 < t \leq t_1$}.
  \end{equation*}
  Using this again in \eqref{eq:Xtdif} we have
  \begin{align*}
    |X_t(x_0, {\tilde{v}_1} )  - X_t(x_0, {\tilde{v}_0} ) - (\tilde{v}_1- \tilde{v}_0)t |
    & \le |\tilde{v}_1 -\tilde{v}_0| C t_1\int_0^t  u e^{C t_1u}  \d u 
    \\
    & \le  |\tilde{v}_1 -\tilde{v}_0| t\, C t_1^2 e^{C t_1^2}.
  \end{align*}
  Taking $t_1$ such that
  \begin{equation}
    \label{eq:T1-bound0}
    C t_1^2 e^{C t_1^2 } \le \frac 1 4  
  \end{equation}
  we have
  \begin{equation*}
    |X_t(x_0, {\tilde{v}_1} ) - X_t(x_0, {\tilde{v}_0} ) - (\tilde{v}_1- \tilde{v}_0)t |
    \le \frac 1 4 |\tilde{v}_1 -\tilde{v}_0| t
  \end{equation*}
  which is the same as
  \begin{equation}
    \label{2}
    \left | X_t\left( x_0, \frac {v_1} t \right)- X_t \left( x_0, \frac {v_0} t \right) - (v_1- v_0) \right |
    \le \frac 1 4 |v_1 -v_0|,
  \end{equation}
  for any $0 < t \leq t_1$ and any $v_0, v_1$ with $|v_0|, |v_1| \leq 4 R$.
  Now, for any
  $0 \leq t \le t_1$ and $|v| \leq 4 R$ we define
  \begin{equation*}
    A_t (v) = v - f(t, v).
  \end{equation*}
  A fixed point of $A_t(v)$ satisfies $f(t, v) =0$, and by \eqref{2}
  $A_t(v)$ is contractive:
  \begin{equation*}
    |A_t(v_1) - A_t (v_0)|
    \le \frac 1 4 |v_1 -  v_0|
    \qquad \text{for $0 \leq t \leq t_1$, $|v| \leq 4 R$}.
  \end{equation*}
  (Equation \eqref{2} proves this for $0 < t \leq t_1$, and for $t= 0$
  it is obvious.) In order to use the Banach fixed-point theorem we
  still need to show that the image of $A_t$ is inside the set with
  $|v| \leq 4R$. Using \eqref{2} for $v_1 = 0$, $v_0 = v$ we also see
  that
  \begin{equation*}
    \left | X_t(x_0, 0) - X_t \left( x_0, \frac {v} t \right) + v \right |
    \le \frac 1 4 |v|,
  \end{equation*}
  which gives
  \begin{equation*}
    |A_t(v) + x_1 - X_t(x_0,0) |  \le \frac 1 4 |v|,
  \end{equation*}
  so
  \begin{equation}
    \label{eq:At_bound}
    |A_t(v) | \le \frac 1 4 |v| + |x_1| + |X_t(x_0,0) |
    \leq
    2 R + |X_t(x_0,0) |.
  \end{equation}
  If we take
  \begin{equation}
    \label{eq:T1-bound}
    t_1 \leq \frac{\sqrt{R}}{\sqrt{2 C_{2R}}}
  \end{equation}
  then Lemma \ref{lem:Xt-bound} (used for $\lambda = 2$) shows that
  \begin{equation*}
    |X_t(x_0,0) | \leq 2 R \qquad \text{for $0 \leq t \leq t_1$},
  \end{equation*}
  and from \eqref{eq:At_bound} we have
  \begin{equation*}
    |A_t(v)|
    \leq
    4 R
   \quad \text{for}\quad 0 < t \leq t_1.
  \end{equation*}
  Hence, as long as $t_1$ satisfies \eqref{eq:T1-bound0} and
  \eqref{eq:T1-bound}, $A_t$ has a fixed point $|v|$ for any
  $0 < t \leq t_1$, and this fixed point satisfies $|v| \leq 4R$.
\end{proof}

\begin{lem}
  \label{lem:HTshooting}
  Assume the potential $\Phi \in \mathcal{C}^2(\R^d)$ is bounded
  below, and let $T_s$ denote the transport semigroup associated to
  the operator
  $f\mapsto -v \cdot \nabla_x f + \nabla_x \Phi(x) \cdot \nabla_v
  f$. Given any $R > 0$ there exists a time $t_1 > 0$ such that for
  any $0 < s < t_1$ one can find constants $\alpha, R', R_2 > 0$
  (depending on $s$ and $R$) such that
  \begin{equation}
    \label{eq:HTshooting}
    \int_{B(R')} T_s (\delta_{x_0} \1_{\{|v| \leq R_2 \}} ) \d v
    \geq
    \alpha \1_{\{|x| \leq R\}},
  \end{equation}
  for any $x_0$ with $|x_0| \leq R$. The constants $\alpha, R', R_2$
  are uniformly bounded in bounded intervals of time; that is, for any
  closed interval $J \subseteq (0,t_1)$ one can find
  $\alpha, R', R_2$ for which the inequality holds for all
  $s \in J$.
\end{lem}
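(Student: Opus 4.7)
The plan is to compute the left-hand side of \eqref{eq:HTshooting} directly using the fact that $T_s$ acts on measures as pushforward along the Hamiltonian flow $\Psi_s(x_0, v_0) = (X_s(x_0, v_0), V_s(x_0, v_0))$ of \eqref{eq:ODE1}. Concretely, for any nonnegative test function $\psi \in C_c(\R^d)$ one has
\[
  \int_{\R^d} \psi(x) \left( \int_{B(R')} T_s\bigl(\delta_{x_0} \1_{\{|v| \leq R_2\}}\bigr) \, dv \right) dx = \int_{B(R_2)} \psi\bigl(X_s(x_0, v_0)\bigr) \, \1_{\{|V_s(x_0, v_0)| \leq R'\}} \, dv_0,
\]
so it suffices to bound this integral from below by $\alpha \int_{B(R)} \psi$ for appropriate $\alpha, R', R_2$.

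Next I would fix $t_1 := t_1(R)$ from Lemma \ref{lem:shooting} and $s \in (0, t_1)$, set $R_2 := 4R/s$, and study the map $F_s \colon B(R_2) \to \R^d$ defined by $F_s(v) = X_s(x_0, v)$. Lemma \ref{lem:shooting}, applied to each target point $x_1 \in B(R)$, shows that $F_s(B(R_2)) \supseteq B(R)$. Moreover, the contraction estimate \eqref{2} established inside the proof of Lemma \ref{lem:shooting} translates, via $G_s(v) := X_s(x_0, v/s) = F_s(v/s)$, into the bi-Lipschitz bound
\[
  \tfrac{3}{4}|v_1 - v_0| \leq |G_s(v_1) - G_s(v_0)| \leq \tfrac{5}{4}|v_1 - v_0| \quad \text{for } v_0, v_1 \in B(4R).
\]
Since $G_s$ is $C^1$, this forces $\|DG_s - I\|_{\mathrm{op}} \leq 1/4$ and therefore $|\det DG_s| \in [(3/4)^d, (5/4)^d]$; rescaling gives $|\det DF_s(v)| \in [(3s/4)^d, (5s/4)^d]$ on $B(R_2)$, together with injectivity of $F_s$ there. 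Finally, Lemma \ref{lem:Xt-bound} (with $\lambda = 9$) yields $|V_s(x_0, v_0)| \leq |v_0| + C_{9R}\, s \leq 4R/s + C_{9R}\, s$ for $v_0 \in B(R_2)$, so the choice $R' := 4R/s + C_{9R}\, s$ makes the velocity indicator identically $1$ on $B(R_2)$.

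With these choices the change of variables $y = F_s(v_0)$ yields, for any $\psi \geq 0$ supported in $B(R)$,
\[
  \int_{B(R_2)} \psi\bigl(X_s(x_0, v_0)\bigr) \, dv_0 = \int_{B(R)} \frac{\psi(y)}{|\det DF_s(F_s^{-1}(y))|} \, dy \geq \left(\frac{4}{5s}\right)^{\!d} \int_{B(R)} \psi(y) \, dy,
\]
which is the inequality \eqref{eq:HTshooting} with $\alpha := (4/(5s))^d$. Since $\alpha$, $R'$ and $R_2$ are continuous in $s \in (0, t_1)$ (with $\alpha$ bounded below and $R', R_2$ bounded above away from $s = 0$), one can take their extreme values over any closed subinterval $J \subseteq (0, t_1)$, giving the uniformity claim. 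The only point requiring care is extracting the quantitative bi-Lipschitz and Jacobian bounds on $F_s$ from \eqref{2}; once those are in hand, the rest is a routine change of variables combined with Lemmas \ref{lem:shooting} and \ref{lem:Xt-bound}.
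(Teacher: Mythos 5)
Your proof is correct and follows essentially the same route as the paper's: the dual/pushforward formulation, Lemma \ref{lem:shooting} for surjectivity of $v \mapsto X_s(x_0,v)$ from $\{|v|\leq R_2\}$ onto $B(R)$, a bound on $|V_s|$ to discharge the velocity indicator, and a Jacobian estimate combined with a change of variables. The only differences are refinements rather than a different approach: you extract an explicit two-sided Jacobian bound (and injectivity) from the contraction estimate \eqref{2}, where the paper simply posits an upper bound $M$ on $|\Jac_v X_s|$ and uses the area formula (which requires no injectivity), and you control $|V_s|$ via Lemma \ref{lem:Xt-bound} where the paper uses conservation of the energy $H$.
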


\begin{proof}
  Since the statement is invariant if $\Phi$ changes by an additive
  constant, we may assume that $\Phi \geq 0$ for simplicity. Using
  Lemma \ref{lem:shooting} we find $t_1$ such that for any $s < t_1$
  and every $x_1 \in B(R)$ there exists $v \in B(4R)$ (depending on
  $x_0$, $x_1$ and $s$) such that
  \[ X_s\left(x_0, \frac{v}{s} \right) = x_1. \] Since
  $v/s \in B(4R/s)$, call $R_2 := 4R/s$. We see that for every
  $x_1 \in B(0, R)$ there is at least one $u \in \R^d$ such that
  \[ (x_1, u) \in T_s\left( \{x_0\} \times \{ |v| \leq R_2\}\right). \]
  In other words,
  \begin{equation}
    \label{eq:shooting_sets}
    X_s(x_0, \{ |v| \leq R_2 \}) \supseteq B(0,R).
  \end{equation}
  This essentially contains our result, and we just need to carry out
  a technical argument to complete it and estimate the constants
  $\alpha$ and $R'$. For any compactly supported, continuous and
  positive $\varphi \: \R^d \to \R$ we have
  \begin{multline}
    \label{eq:Tsdual}
    \ird \varphi(x) \int_{B(R')} T_s (\delta_{x_0} \1_{\{|v| \leq R_2 \}})
    \d v \d x
    \\
    =
    \ird \ird \1_{\{|V_s(x,v)| < R'\}} \, \varphi(X_s(x, v)) \delta_{x_0}(x) \1_{\{|v| \leq R_2 \}})
      \d v \d x
    \\
    =
    \int_{|v| \leq R_2} \1_{\{|V_s(x_0,v)| < R'\}}\, \varphi(X_s(x_0, v) \d v,
  \end{multline}
  since the characteristics map $(x,v) \mapsto (X_s(x,v), V_s(x,v))$
  is measure-preserving. If we write the energy as
  $H(x,v) = |v|^2/2 + \Phi(x)$ and call
  \begin{equation*}
    E_0 := \sup \, \{H(x,v) \,:\, |x| < R, |v| < R_2\}.
  \end{equation*}
  Then for all $s \geq 0$
  \begin{equation*}
    H(X_s(x_0,v), V_s(x_0,v)) \leq E_0,
  \end{equation*}
  and in particular
  \begin{equation*}
    |V_s(x_0,v)| \leq \sqrt{2E_0}.
  \end{equation*}
  If we take $R' > \sqrt{2 E_0}$ then the term
  $\1_{\{|V_s(x_0,v)| < R'\}}$ is always $1$ in \eqref{eq:Tsdual} and we
  get
  \begin{equation*}
    \ird \varphi(x) \int_{B(R')} T_s (\delta_{x_0} \1_{\{|v| \leq R_2 \}})
    \d v \d x
    =
    \int_{|v| \leq R_2} \varphi(X_s(x_0, v)) \d v.
  \end{equation*}
  Now, take an $M > 0$ such that
  $|\Jac_v X_s(x, v) | \leq M$ for all $(x,v)$ with $|x| \leq R$ and
  $|v| \leq R_2$. (Notice this $M$ depends only on $\Phi$, $R$ and
  $R_2$.) Then
  \begin{align*}
    \int_{|v| \leq R_2} \varphi(X_s(x_0, v)) \d v
    &\geq
    \frac{1}{M} \int_{|v| \leq R_2} \varphi(X_s(x_0, v))
    |\Jac_v X_s(x_0, v) | \d v
    \\
    &=
      \frac{1}{M} \int_{X_s(x_0, \{|v| \leq R_2\})} \varphi(x) \d x
      \geq
      \frac{1}{M} \int_{B(0,4R)} \varphi(x) \d x,
  \end{align*}
  where we have used \eqref{eq:shooting_sets} in the last step. In sum
  we find that
  \begin{equation*}
    \ird \varphi(x) \ird T_s (\delta_{x_0} \1_{\{|v| \leq R_2 \}})
    \d v \d x
    \geq
    \frac{1}{M} \int_{B(0,R)} \varphi(x) \d x
  \end{equation*}
  for all compactly supported, continuous and positive functions
  $\varphi$. This directly implies the result.
\end{proof}

\begin{lem}[Doeblin condition for linear relaxation Boltzmann equation with a confining
  potential]
  \label{lem:LBEdoeblin}
  Let the potential $\Phi \: \R^d \to \R$ be a $\mathcal{C}^2$
  function with compact level sets. Given $t > 0$ and $K > 0$ there
  exist constants $\alpha, \delta_X, \delta_V > 0$ such that any
  solution $f$ to equation \eqref{eq:LBGKc} with initial condition
  $f_0 \in \P(\R^d \times \R^d)$ supported
  on $B(0,K) \times B(0,K)$ satisfies
  \begin{equation*}
    f(t, x, v) \geq \alpha \1_{\{|x| < \delta_X\}} \ \1_{\{|v| < \delta_V\}}
  \end{equation*}
 in the sense of measures.
\end{lem}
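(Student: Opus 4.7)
The strategy parallels Lemma \ref{lem:Doeblin_LBGK}, replacing Lemma \ref{lem:HT} with the shooting lemma (Lemma \ref{lem:HTshooting}) and adding a continuity argument for the final transport step to accommodate the potential $\Phi$.

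By linearity it suffices to establish the bound for a Dirac initial datum $f_0 = \delta_{(x_0,v_0)}$ with constants uniform in $(x_0, v_0) \in \overline{B(0,K)} \times \overline{B(0,K)}$. Iterating Duhamel's formula exactly as in \eqref{eq:Duhamel} gives
\[
  e^t f(t) \ge \int_0^t \int_0^s T_{t-s}\, \mathcal{L}^+\, T_{s-r}\, \mathcal{L}^+\, T_r f_0 \,\mathrm{d}r\,\mathrm{d}s,
\]
where $T_u$ now denotes the Hamiltonian transport semigroup; since the characteristic flow is measure-preserving, $T_u g(x,v) = g(X_{-u}(x,v), V_{-u}(x,v))$. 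By Lemma \ref{lem:Xt-bound}, some $R_0 = R_0(K, t, \Phi)$ satisfies $|X_r(x_0, v_0)| \le R_0$ for all $r \in [0, t]$ and all $(x_0, v_0) \in B(0,K)^2$. Pick $R_1 \ge R_0$ (to be enlarged below): Lemma \ref{lem:HTshooting} with $R = R_1$ furnishes $t_1, \alpha_S, R', R_2 > 0$, and Lemma \ref{lem:HL} with $\delta_L = R_2$ gives an $\alpha_L > 0$ such that
\[
  \mathcal{L}^+ T_r f_0 \ge \alpha_L \, \delta_{X_r(x_0, v_0)}(x) \, \1_{\{|v| \le R_2\}},
\]
and hence, whenever $s - r$ lies in a fixed closed subinterval of $(0, t_1)$,
\[
  \mathcal{L}^+ T_{s-r}\mathcal{L}^+ T_r f_0(x, v) \ge \alpha_L \alpha_S \, \mathcal{M}(v) \, \1_{\{|x| \le R_1\}}.
\]

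For the final transport,
\[
  T_{t-s}\!\left[\mathcal{M}(v) \, \1_{\{|x| \le R_1\}}\right](x, v) = \mathcal{M}\!\left(V_{-(t-s)}(x, v)\right) \, \1_{\{|X_{-(t-s)}(x, v)| \le R_1\}}.
\]
Choose closed sub-intervals $[r_1, r_2]$ and $[s_1, s_2]$ of $(0, t)$ with $r_2 < s_1$ and $s_2 - r_1 < t_1$, and enlarge $R_1$ if necessary so that $\sup_{s \in [s_1, s_2]} |X_{-(t-s)}(0,0)| + 1 \le R_1$. By continuity of the characteristic flow in $(x, v)$, uniform on the compact time set $[t - s_2, t - s_1]$, there exist $\delta_X, \delta_V, V_{\max} > 0$ such that $|X_{-(t-s)}(x, v)| \le R_1$ and $|V_{-(t-s)}(x, v)| \le V_{\max}$ for every $(x, v) \in B(0, \delta_X) \times B(0, \delta_V)$ and every $s \in [s_1, s_2]$. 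Integrating over $r \in [r_1, r_2]$ and $s \in [s_1, s_2]$ and multiplying by $e^{-t}$ yields the claim with
\[
  \alpha \ge e^{-t}(r_2 - r_1)(s_2 - s_1)\, \alpha_L \, \alpha_S \, \mathcal{M}(V_{\max}) > 0.
\]

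The main delicate point is the interplay between the parameters: enlarging $R_1$ to dominate the origin's backward trajectory may shrink $t_1$, but since only \emph{some} strictly positive constants are needed, one fixes $R_1$ first and then selects the sub-intervals so that $s_2 - r_1 < t_1$ and $r_2 < s_1$. Uniformity over $(x_0, v_0) \in B(0, K)^2$ is automatic, since every constant appearing (namely $R_0$, $R_1$, $R_2$, $\alpha_L$, $\alpha_S$, $V_{\max}$, $\delta_X$, $\delta_V$) depends only on $K$, $t$, and $\Phi$.
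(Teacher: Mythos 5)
Your argument is essentially sound, and it reaches the same conclusion as the paper, but there are two points worth flagging — one a small imprecision, one a genuine (though valid) difference in route.

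First, the claim that Lemma~\ref{lem:Xt-bound} directly furnishes an $R_0$ with $|X_r(x_0,v_0)|\le R_0$ for \emph{all} $r\in[0,t]$ is not quite right: that lemma only controls the flow on a time interval $|r|\le T$, where $T$ depends on $|v_0|$ and on $\sup_{B(\lambda R)}|\nabla\Phi|$, and $T$ may well be much smaller than the prescribed $t$. Iterating the lemma degrades the constants at each step and does not obviously close. The paper instead uses conservation of the Hamiltonian $H(x,v)=\Phi(x)+|v|^2/2$ together with the compactness of the level sets of $\Phi$: set $H_{\max}(K)=\max\{H(x,v):|x|,|v|\le K\}$ and $R:=\max\{|x|:\Phi(x)\le H_{\max}(K)\}$; then $|X_r|\le R$ for \emph{all} $r\ge 0$ with no time restriction. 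You should use this energy argument to justify your $R_0$.

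Second, your treatment of the last transport step is a genuinely different (and correct) route. The paper restricts the final transport to a short time $t-s\le\epsilon$ and uses that for $\tau$ small the flow $T_\tau$ is near the identity, so $T_\tau(\1_{B(R)}\1_{B(R_2)})\ge\1_{B(R/2)}\1_{B(R_2/2)}$; this gives $\delta_X=R/2$, $\delta_V=R_2/2$ essentially for free. You instead keep $t-s$ away from zero, write the final transport exactly as composition with the backward flow, and then use continuity of $(x,v)\mapsto(X_{-u}(x,v),V_{-u}(x,v))$ uniformly on a compact time set to shrink the final ball $B(\delta_X)\times B(\delta_V)$ so its backward image stays inside $B(R_1)\times B(V_{\max})$. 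This works, but it forces you to enlarge $R_1$ to dominate $\sup_u|X_{-u}(0,0)|$, and that enlargement must be done \emph{before} applying Lemma~\ref{lem:HTshooting}, since the constants $t_1,\alpha_S,R',R_2$ from that lemma depend on $R_1$. Your final paragraph correctly identifies and resolves this ordering issue (the quantity $\sup_{u\in[0,t]}|X_{-u}(0,0)|$ is a fixed constant of $\Phi$ and $t$, so $R_1$ can be fixed first), but as written the proof applies Lemma~\ref{lem:HTshooting} before the enlargement, which would need to be restated cleanly. A small cosmetic remark: you track the Maxwellian $\mathcal M(v)$ through the second application of $\mathcal L^+$ rather than lower-bounding it again by an indicator as the paper does — either is fine, and yours gives a marginally more explicit final constant.
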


\begin{proof}
  Fix any $t, K > 0$. Set
  \[H_{\max}(K)=\max \, \left \{ H(x,v)= |v|^2/2+\Phi(x) \,:\, x \in B(0,K), v \in B(0,K) \right \} \] and then define
  \[R:=\max\, \left \{ |x| \,:\, \Phi(x) \leq H_{\max}(K)\right \}. \] Since our
  conditions on $\Phi$ imply that its level sets are compact we know
  that $R$ is finite. We use Lemma \ref{lem:HTshooting} to find
  constants $\alpha, R_2 > 0$ and an interval $[a,b] \subseteq (0,t)$
  such that
  \begin{equation*}
    \ird T_s (\delta_{x_0} \1_{\{|v| \leq R_2 \}} ) \d v
    \geq
    \alpha \1_{\{|x| \leq R\}},
  \end{equation*}
  for any $x_0$ with $|x_0| \leq R$ and any $s \in [a,b]$. From Lemma
  \ref{lem:HL} we will use that there exists a constant $\alpha_L > 0$
  such that
  \begin{equation}
    \label{eq:HL2}
    \mathcal{L}^+ g(x,v) \geq
    \alpha_L \left( \ird g(x,u) \d u \right) \1_{\{|v| \leq R_2\}}
  \end{equation}
  for all nonnegative measures $g$. We first notice that we can do the
  same estimate as in formula \eqref{eq:Duhamel}, where now $(T_t)_{t
    \geq 0}$ represents the semigroup generated by the operator $-v
  \cdot \nabla_x f + \nabla_x \Phi(x) \cdot \nabla_v f$:
  \begin{equation}
    \label{eq:Duhamel2}
    e^t f_t \geq \int_0^t \int_0^s T_{t-s} \mathcal{L}^+ T_{s-r} \mathcal{L}^+ T_r f_0
    \d r \d s.
  \end{equation}
  Take $x_0, v_0 \in B(0,K)$, and call $f_0 :=
  \delta_{(x_0,v_0)}$. For all $r$ we have by the definition of $R$
  that
  \begin{equation}
    \label{eq:Xr-bound}
    |X_r(x_0,v_0)| \leq  R
    \qquad
    \text{for all $0 \leq r$.}  
  \end{equation}
  For any $r > 0$, since
  $T_r f_0 = \delta_{(X_r(x_0,v_0), V_r(x_0, v_0))}$, using
  \eqref{eq:HL} gives
  \begin{equation*}
    \mathcal{L}^+ T_r f_0 \geq
    \alpha_L  \delta_{X_r(x_0,v_0)}(x) \1_{\{|v| \leq R_2\}}.
  \end{equation*}
  Then, using \eqref{eq:Xr-bound} and our two lemmas, whenever
  $s-r \in [a,b]$ we have
  \begin{align*}
    \mathcal{L}^+ T_{s-r} \mathcal{L}^+ T_r f_0
    &\geq
    \alpha_L \left( \ird T_{s-r} \mathcal{L}^+ T_r f_0 \d u \right)
      \1_{\{|v| \leq R_2\}}
    \\
    &
      \geq
      \alpha_L^2 \left(
      \ird T_{s-r} \Big( \delta_{X_r(x_0,v_0)}(x)
      \1_{\{|u| \leq R_2\}}
      \Big)
      \d u
      \right)
      \1_{\{|v| \leq R_2\}}
    \\
    &
      \geq
      \alpha_L^2 \alpha\,
      \1_{\{|x| \leq R\}}
      \1_{\{|v| \leq R_2\}}.
  \end{align*}
  We now need to allow for a final bit of movement along the flow
  $T_{t-s}$. The time gradient of the flow is bounded by
  $|\nabla_x(\Phi(X_t(x,v))|+|V_t(x,v)|$ and this quantity is bounded
  on sublevel sets of the Hamiltonian which are preserved by the flow
  so there exists a sufficiently small, quantifiable $\epsilon>0$ so
  that for all $0 \leq \tau \leq \epsilon$ we have
  \begin{equation}
    \label{eq:p1}
    T_{\tau} \Big( \1_{B(R)}(x) \1_{B(R_2)}(v) \Big)
    \geq
    \1_{B(R/2)}(x) \1_{B(R_2/2)}(v).
  \end{equation}
  (We point out a way to quantify $\epsilon$: since $T_\tau$ is
  measure-preserving, we have $T_\tau (h)(x,v) = h(X_\tau(x,v),
  V_t(x,v))$ for any function $h = h(x,v)$. Define the inverse flow of $T_\tau$ by $G_\tau$, if we denote $G_\tau(x_0 ,v_0 ) =(Y_\tau(x_0, v_0), Z_\tau(x_0, v_0))$, then $Y_\tau, Z_\tau$ satisfies
\begin{equation}
  \nonumber
  \begin{aligned}
    &\dot{y} = -z
    \\
    &\dot{z} = W(y),
  \end{aligned}
\end{equation}
 with initial condition $\{x_0, y_0 \}$. Hence \eqref{eq:p1} holds
   if $|Y_\tau(x,v)| \leq R$ and $|Z_\tau(x,v)| \leq R_2$ for all $|x|
   \leq R/2$, $|v| \leq R_2/2$. It's easily seen that the result of Lemma \ref{lem:Xt-bound} will still hold for $G_\tau$, so we can take  
   \begin{equation*}
    \epsilon = \min\left\{ \frac{R}{2 R_2},
      \frac{\sqrt{ R}}{2 \sqrt{ C_{ R}}}   ,  \frac{R_2}{2 R} \right\},
    \qquad
    C_{ R} := \max_{|x| \leq R} |\nabla \Phi(x)|.
  \end{equation*}
  by Lemma \ref{lem:Xt-bound}.)
   
  From \eqref{eq:p1}, for all $t, s, r$ such that $t-s \leq \epsilon$
  and $s-r \in (a, b)$ we have
  \[  T_{t-s}\mathcal{L}^+ T_{s-r} \mathcal{L}^+ T_r f_0 \geq
    \alpha_L^2 \alpha\,
    \1_{\{|x| \leq R/2\}} \, \1_{\{|v| \leq R_2/2\}}. \]
  We have then
  \begin{multline*}
    \int_{0}^{t} \int_0^{s}
    T_{t-s}\mathcal{L}^+T_{s-r}\mathcal{L}^+ T_r f_0 \d r \d s
    \geq
    \alpha_L^2 \alpha
    \int_{t-\epsilon}^{t} \int_{s-b}^{s-a}
    \1_{\{|x| \leq R/2\}} \1_{\{|v| \leq R_2/2\}} \d r \d s
    \\
    =
    \alpha_L^2 \alpha \epsilon (b-a)
    \1_{\{|x| \leq R/2\}} \1_{\{|v| \leq R_2/2\}}.
  \end{multline*}
  Finally, from Duhamel's formula \eqref{eq:Duhamel2} we obtain
  \begin{equation*}
    f(t,x,v) \geq e^{-t} \alpha_L^2 \alpha \epsilon (b-a)
    \, \1_{\{|x| \leq R/2\}} \1_{\{|v| \leq R_2/2\}},
  \end{equation*}
  which gives the result.  
\end{proof}

\begin{lem}[Lyapunov condition]\label{lem:LBEconfining}
  Suppose that $\Phi(x)$ is a $\mathcal{C}^2$ function satisfying
  \[ x \cdot \nabla \Phi(x) \geq \gamma_1 |x|^2 + \gamma_2 \Phi(x) - A \] for
  positive constants $A$, $\gamma_1 $ $\gamma_2$.  Then we have that
  \[ V(x,v)= 1+\Phi(x) + \frac{1}{2}|v|^2 +
    \frac{1}{4}x \cdot v + \frac{1}{8}|x|^2 
  \] is a function for which the semigroup satisfies Hypothesis
  \ref{confinement}.
\end{lem}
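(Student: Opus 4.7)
The plan is to verify Hypothesis \ref{confinement} via its differential version as in Remark \ref{rem:Lyapunov-equivalent}: it suffices to show $\UU V \leq -\lambda V + K$ for some positive $\lambda, K$, where $\UU$ is the forwards operator associated to \eqref{eq:LBGKc}. A duality calculation (integration by parts in $x$ and $v$, together with the dual of $\mathcal{L}^+ - I$) gives
\begin{equation*}
\UU V(x,v) = v \cdot \nabla_x V - \nabla_x \Phi \cdot \nabla_v V
+ \int_{\R^d} \mathcal{M}(v') V(x,v') \d v' - V(x,v).
\end{equation*}
So the first step is to plug the specific $V$ into this formula and identify the cancellations.

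The main algebraic observation is that the cross term $\tfrac14 x\cdot v$ in $V$ is chosen precisely to cancel the sign-indefinite contribution $v\cdot\nabla\Phi$ that the transport part would otherwise produce. Indeed, with $\nabla_x V = \nabla\Phi + \tfrac14 v + \tfrac14 x$ and $\nabla_v V = v + \tfrac14 x$, the two $v\cdot\nabla\Phi$ terms from $v\cdot\nabla_x V$ and $-\nabla_x\Phi\cdot\nabla_v V$ cancel, leaving $\tfrac14|v|^2 + \tfrac14 x\cdot v - \tfrac14 x\cdot\nabla\Phi$. On the other hand, the Maxwellian averages $\int \M(v')\,dv' = 1$, $\int \M(v') v' \, dv' = 0$ and $\int \M(v')|v'|^2 \, dv' = d$ yield
\begin{equation*}
\int \M(v') V(x,v')\,\d v' - V(x,v)
= \tfrac{d}{2} - \tfrac12 |v|^2 - \tfrac14 x\cdot v.
\end{equation*}
Summing the two contributions, the $x \cdot v$ terms and half of the $|v|^2$ cancel, leaving the clean identity
\begin{equation*}
\UU V(x,v) = \tfrac{d}{2} - \tfrac14 |v|^2 - \tfrac14 x \cdot \nabla\Phi(x).
\end{equation*}

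Once this identity is in hand, the hypothesis $x \cdot \nabla\Phi(x) \geq \gamma_1|x|^2 + \gamma_2 \Phi(x) - A$ immediately yields
\begin{equation*}
\UU V \leq \tfrac{d}{2} + \tfrac{A}{4} - \tfrac14|v|^2 - \tfrac{\gamma_1}{4}|x|^2 - \tfrac{\gamma_2}{4}\Phi(x).
\end{equation*}
To close the estimate, I would use Young's inequality $|\tfrac14 x \cdot v| \leq \tfrac18|v|^2 + \tfrac18|x|^2$, which shows that $V$ is comparable to $1 + \Phi + |v|^2 + |x|^2$, bounded both above (so $\lambda V$ can be dominated term-by-term by the negative part of the above bound) and below by a positive multiple of the same quantity (so $V \geq 1$ when $\Phi$ is normalized to be nonnegative, as one may assume since $\Phi$ is bounded below). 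Choosing $\lambda$ small enough (for instance any $\lambda \leq \min\{\gamma_2/4,\ \gamma_1,\ 2/5\}$) and $K$ correspondingly large produces $\UU V \leq -\lambda V + K$, which translates into Hypothesis \ref{confinement} at any $t_* > 0$ with constants $\alpha = e^{-\lambda t_*}$ and $D = \tfrac{K}{\lambda}(1-e^{-\lambda t_*})$.

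The only real obstacle is bookkeeping: the proof is entirely algebraic and hinges on the precise choice of the cross term $\tfrac14 x \cdot v$. Without it, the transport produces an uncontrolled $v \cdot \nabla\Phi$; without the $\tfrac18|x|^2$ piece, the lower bound on $V$ after applying Young's inequality would fail. Every coefficient in the definition of $V$ is therefore tuned to make the signs match, but given the identity for $\UU V$ above, the conclusion is routine.
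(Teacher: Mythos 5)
Your proposal is correct and follows essentially the same route as the paper: computing the forwards operator on $V$ via the duals of the transport and relaxation operators, observing that the choice $a=1/4$, $b=1/8$ makes the $x\cdot v$ terms cancel to leave $\UU V = \tfrac{d}{2} - \tfrac14|v|^2 - \tfrac14 x\cdot\nabla\Phi$, and then invoking the confinement hypothesis together with the equivalence of $V$ with $1+\Phi+|x|^2+|v|^2$. The only cosmetic difference is that the paper keeps the coefficients $a,b$ general until the end (imposing $a^2<2b$ for positivity) whereas you substitute them from the start.
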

\begin{remark}
If $\Phi$
  is superquadratic at infinity (which is implied by earlier
  assumptions) then $V$ is equivalent to $1+H(x,v)$ where the energy is defined as $H(x,v) = |v|^2/2+\Phi(x)$. So the total variation distance weighted by $V$ is equivalent to the total variation distance weighted by $1+H(x,v)$.
\end{remark}

\begin{proof}

  We look at the forwards operator acting on an observable $\phi$,
  \[ \UU \phi = v \cdot \nabla_x \phi - \nabla_x \Phi(x) \cdot
    \nabla_v \phi + \LL^* \phi
    =: \TT^* \phi + \LL^* \phi, \]
  where $\LL^*$ is the adjoint of the linear relaxation Boltzmann
  operator $\LL$, given by
  \[ \LL^* \phi(x,v) = \int \phi(x,u) \M(u) \d u  - \phi(x,v). \]
  We want a function $V(x,v)$ such that
  \[ \UU V \leq - \lambda V + K \] for some constants
  $\lambda > 0, K\geq 0$. We need to make the assumption that
  \begin{equation}
    \label{eq:Phi-bound}
    x \cdot \nabla_x \Phi(x) \geq \gamma_1 |x|^2 + \gamma_2 \Phi(x) - A.
  \end{equation}
  for some positive constant $A, \gamma_1, \gamma_2$. We then try the
  function
  \[ V(x,v) = H(x,v) + a x \cdot v + b |x|^2
    =  \Phi(x) + \frac{1}{2}|v|^2 + a x \cdot v + b |x|^2, \]
 with $a, b>0$ to be fixed later. We want this to be positive so we impose $a^2 < 2b$.  Using that
  \begin{equation*}
    \LL^* (|v|^2) = d - |v|^2,
    \qquad
    \LL^* (x \cdot v) = -x \cdot v,
    \qquad
    \LL^* (\Phi(x)) = \LL^* (|x|^2) = 0
  \end{equation*}
  and that
  \begin{equation*}
    \TT^* (H(x,v)) = 0,
    \qquad
    \TT^* (x \cdot v) = |v|^2 - x \cdot \nabla_x \Phi(x),
    \qquad
    \TT^* (|x|^2) = 2 x \cdot v,
  \end{equation*}
  we see that
  \begin{align*}
    \UU (V) =& \frac{d}{2} - \frac{1}{2}|v|^2 - a x \cdot v + a |v|^2 - a x \cdot \nabla_x \Phi(x) +2b x \cdot v\\
    \leq & C' -\left(\frac{1}{2} - a \right) |v|^2 +(2b-a) x \cdot v - a
           \gamma_1 |x|^2 - a \gamma_2\Phi(x),
  \end{align*}
  where we have used \eqref{eq:Phi-bound}, and $C' := \frac{d}{2} +
  aA$. Now, taking $a=1/4, b=1/8$,
  \begin{align*}
    \UU (V)
    = & C' - \frac{1}{4} |v|^2
        - \frac{\gamma_1}{4} |x|^2 - \frac{\gamma_2}{4}\Phi(x) 
    \\
    \leq & C' - \frac{\min(\gamma_1, 1)}{4} (|x|^2 + |v|^2) -
        \frac{\gamma_2}{4} \Phi(x)
    \\
    \leq
      & C' -\frac{\min(\gamma_1, 1)}{4}
        \left( \frac{1}{2}|v|^2 + \frac{1}{4}x \cdot v + \frac{1}{8}
        |x|^2 \right) - \frac{\gamma_2}{4} \Phi(x).
  \end{align*}
  So $V(x,v)$ works with
  \[ \lambda = \frac{\min(\gamma_1, \gamma_2, 1)}{4}. \qedhere\]
\end{proof}

\medskip
\begin{proof}[Proof of Theorem \ref{thm:main-confining} in the case of
  the linear relaxation Boltzmann equation]
  The proof follows by applying Harris's Theorem since Lemmas
  \ref{lem:LBEdoeblin} and \ref{lem:LBEconfining} show that the
  equation satisfies the hypotheses of the theorem.
\end{proof}

\subsection{Subgeometric convergence}

When we do not have the superquadratic behaviour of the confining
potential at infinity we can still use a Harris type theorem to show
convergence to equilibrium. This time we must pay the price of having
subgeometric rates of convergence. We use the subgeometric Harris's Theorem given in Section \ref{sec:Harris} which
can be found in Section 4 of \cite{H10}.
Now instead of our earlier assumption on the confining potential $\Phi$, we instead make a weaker assumption that $\Phi$ is a $C^2$ function satisfying

\[ x \cdot \nabla_x \Phi(x) \geq \gamma_1 \langle x \rangle^{2\beta} + \gamma_2 \Phi(x) - A, \]  for some positive constant $A, \gamma_1, \gamma_2$, where
\[ \langle x \rangle = \sqrt{1+|x|^2}, \] and $ \beta \in (0,1)$.

\begin{proof}[Proof of Theorem \ref{thm:main-subgeometric} in the case of the linear relaxation Boltzmann equation]
We have already proved the minorisation condition. We can also replicate the calculations for the Lyapunov function to get that in this new situation, take the $V$ in Lemma \ref{lem:LBEconfining}, we have for $a=1/4, b=1/8$ that
\[ \UU V \leq C' - \frac{1}{4}|v|^2 - \frac{\gamma_1}{4} \langle x \rangle^{2\beta} - \frac{\gamma_2}{4} \Phi(x). \] For $x, y \geq 1$
\[ (x+y)^\beta \leq x^\beta + y^\beta.  \]
So we have
\begin{align*}
\UU V \leq & C' - \frac{\min(\gamma_1, 1)}{4} \left(\langle v \rangle^2 + \langle x \rangle^{2\beta} \right ) - \frac{\gamma_2}{4} \Phi(x)\\
\leq  &C'' - \frac{\min(\gamma_1, 1)}{4} \left ( 1+|x|^2+|v|^2 \right )^\beta - \frac{\gamma_2}{4} \Phi(x)^\beta \\
\leq & C'' - \lambda\left(1+ \frac{1}{2}|v|^2 + \frac{1}{4}x \cdot v + \frac{1}{8} |x|^2 \right)^\beta - \lambda \Phi(x)^\beta\\
\leq & C'' - \lambda \left( \Phi(x) +\frac{1}{2}|v|^2 + \frac{1}{4}x \cdot v + \frac{1}{8} |x|^2 \right)^\beta,
\end{align*} for some constant $\lambda, C''>0$ that can be explicitly computed, so we have that
\[ \UU V \leq - \lambda V^\beta + C''. \] This means we can take $\phi(s) = 1+s^\beta$. Therefore, for $u$ large
\[ H_\phi(u) = \int_1^u \frac{1}{1+t^\beta} \mathrm{d}t \sim 1+ u^{1-\beta}, \] and for $t$ large
\[ H_{\phi}^{-1}(t)  \sim 1+t^{1/(1-\beta)} \]  and
\[ \phi \circ H_\phi^{-1}(t) \sim (1+t)^{\beta/(1-\beta)}. \]
\end{proof}

\section{The linear Boltzmann Equation}

We now look at the linear Boltzmann equation. This has been studied in
the spatially homogeneous case in \cite{BCL15, CEL17}. Here the
interest is partly that this is a more complex and physically relevant
operator. Also, it presents less globally uniform behaviour in $v$
which means that we have to use a Lyapunov function even on the
torus. Apart from this, the strategy is very similar to that from the
linear relaxation Boltzmann equation. The full Boltzmann equation has
been studied as a Markov process in \cite{FM01}, the linear case is
similar and more simple. It is well known that this equation preserves 
positivity and mass, which follows from standard techniques
both in the spatially homogeneous case and the case with transport. The
Lyapunov condition on the torus and the bound below on the jump
operator have to be verified in this situation.

We consider for $x \in \mathbb{T}^d$
\begin{equation}
\label{LBEharris2}
\partial_t f + v \cdot \nabla_x f = \int_{\mathbb{R}^d}\int_{\mathbb{S}^{d-1}} B \left( \frac{v-v_*}{|v-v_*|}\cdot \sigma, |v-v_*|\right) \left(f(v')\mathcal{M}(v_*')-f(v)\mathcal{M}(v_*) \right) \mathrm{d}\sigma \mathrm{d}v_*.
\end{equation} 
We assume that $B$ splits as 
\begin{equation} \label{eq:B-splits}
B \left( \frac{v-v_*}{|v-v_*|}\cdot \sigma, |v-v_*|\right) = b
\left( \frac{v-v_*}{|v-v_*|} \cdot \sigma \right)
|v-v_*|^{\gamma}.
\end{equation}
We make a cutoff assumption that $b$ is
integrable in $\sigma$. In fact, we make a much stronger assumption
that $b$ is bounded below by a constant. We also work in the hard
spheres/Maxwell molecules regime that is to suppose $\gamma \geq
0$. When working with the Boltzmann collision kernel we have a choice
of parametrizations for the incoming velocities. Choosing this
`$\sigma$-parametrization' allows for simpler calculations, but is not
otherwise essential. We notice that for the physical hard spheres
kernel the angular kernel $b$ is bounded below in the $\sigma$
parametrization. We have
\[\partial_t f + v \cdot \nabla_x f = \mathcal{L}^+ f
  - \kappa(v) f,  \]
where
$\kappa(v) \geq 0$ and $\kappa(v)$ behaves like $|v|^{\gamma}$
for large $v$; that is,
\begin{equation}
  \label{eq:sigma_bound}
  0 \leq \kappa(v) \leq (1 + |v|^2)^{\gamma/2},
  \qquad v \in \R^d.
\end{equation}
See \cite{CEL17} Lemma 2.1 for example.

We also look at the situation where the spatial variable is in
$\mathbb{R}^d$ and we have a confining potential. With hard
sphere, the operator $\mathcal{L}^+$ acting on $x\cdot v$ produces error terms
which are difficult to deal with. We show that when we have hard
spheres with $\gamma >0$ we can still show exponential convergence
when $\Phi(x)$ is growing at least as fast as $|x|^{\gamma+2}$. In the
subgeometric case we suppose $\Phi(x)$ grows at least as fast as $|x|^{\epsilon+1} ,\epsilon >0$. The equation is
\begin{equation}
\label{LBEharris3}
\partial_t f + v \cdot \nabla_x f - \left( \nabla_x \Phi(x) \cdot \nabla_v f\right) = Q(f,\mathcal{M}).
\end{equation}

We begin by proving lemmas which are useful for proving the Doeblin
condition in both situations. We want to reduce to a similar situation
to the linear relaxation Boltzmann equation.

\begin{lem}
  \label{lem:reducing}
  Let $f$ be a solution to \eqref{LBEharris2} or \eqref{LBEharris3}, and
  define $H(x,v) := |v|^2/2$ on the torus for \eqref{LBEharris2} or
  $H(x,v) := \Phi(x) + |v|^2/2$ in the whole space for
  \eqref{LBEharris3}, where $\Phi$ is a $\mathcal{C}^2$ potential
  bounded below. Take $E_0 > 0$ and assume that $f$ has initial
  condition $f_0 = \delta_{(x_0,v_0)}$ with
  \begin{equation*}
    H(x_0,v_0) \leq E_0.
  \end{equation*}
  Then there exists a constant $C_1 > 0$ such that
  \[
    f(t,x,v) \geq e^{-tC_1} \int_0^t
    \int_0^s T_{t-s} \widetilde{\mathcal{L}}^+ T_{s-r} \widetilde{\mathcal{L}}^+
    T_r (\1_{E} f_0(x,v))   \d r \d s,
  \]
  where
  \begin{equation*}
    \widetilde{\mathcal{L}}^+ g := \1_E \mathcal{\mathcal{L}}^+ g,
    \qquad
    E := \{(x,v) \in \R^d \times \R^d \,:\, H(x,v) \leq E_0\}.
  \end{equation*}
\end{lem}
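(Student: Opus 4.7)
The plan is to exploit the fact that on the energy sublevel set $E = \{H \leq E_0\}$ the multiplicative loss term $\kappa(v)$ is uniformly bounded, while the Hamiltonian transport preserves $H$ exactly. Rewriting the equation as
\[
\partial_t f + v \cdot \nabla_x f - \nabla_x \Phi \cdot \nabla_v f + \kappa(v) f = \mathcal{L}^+ f,
\]
(with $\Phi \equiv 0$ in the torus case), I first observe that on $E$ one has $|v|^2 / 2 \leq E_0 - \inf \Phi$, and by \eqref{eq:sigma_bound} this yields a constant $C_1 = C_1(E_0, \inf \Phi, \gamma)$ with $\kappa(v) \leq C_1$ for every $(x,v) \in E$. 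Moreover, because $H$ is a conserved quantity along the characteristic flow defining the transport semigroup $T_t$, any non-negative measure supported in $E$ remains supported in $E$ under $T_t$.

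Next I would introduce the damped transport semigroup $S_t$ generated by $-(v\cdot\nabla_x - \nabla_x\Phi\cdot\nabla_v) - \kappa(v)$, which along characteristics reads
\[
S_t g(X_t(x,v), V_t(x,v)) = \exp\!\left(-\int_0^t \kappa(V_s(x,v))\,ds\right) g(x,v).
\]
Combining the two observations above, whenever $g \geq 0$ is supported in $E$ one has the pointwise lower bound $S_t g \geq e^{-C_1 t}\, T_t g$, and the right-hand side is again supported in $E$.

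The Duhamel formula for \eqref{LBEharris2}--\eqref{LBEharris3} then reads $f(t) = S_t f_0 + \int_0^t S_{t-s} \mathcal{L}^+ f(s)\,ds$; iterating twice and dropping all the remaining non-negative terms gives
\[
f(t) \geq \int_0^t \int_0^s S_{t-s} \mathcal{L}^+ S_{s-r} \mathcal{L}^+ S_r f_0 \,dr\,ds.
\]
Starting from $f_0 = \1_E f_0$, the reduction is then a bookkeeping exercise: at each stage I replace $S_u$ by $e^{-C_1 u}\, T_u$ (using that everything in sight is supported in $E$) and I bound the full gain operator from below by its truncated version via $\mathcal{L}^+ g \geq \widetilde{\mathcal{L}}^+ g = \1_E \mathcal{L}^+ g$, which both restores the support condition in $E$ needed for the next application of the $S \geq e^{-C_1 \cdot} T$ estimate and is legitimate since $\mathcal{L}^+ g \geq 0$. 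Concatenating the three factors produces the claimed factor $e^{-tC_1}$.

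The main (and essentially only) obstacle is the careful interleaving of the indicator $\1_E$ with $\mathcal{L}^+$ and $T_\cdot$: the full collision gain $\mathcal{L}^+$ does not preserve $E$ (it can scatter into arbitrarily large velocities), so one must systematically truncate via $\widetilde{\mathcal{L}}^+$ before each transport step in order to be allowed to use energy conservation and the bound $\kappa \leq C_1$. Everything else is routine positivity and the standard iterated Duhamel expansion.
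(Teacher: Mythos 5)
Your plan is correct and is essentially the paper's own argument: both identify the damped transport semigroup, use energy conservation to keep the argument of $\kappa$ bounded on $E$ (giving the uniform bound $\kappa \leq C_1$ via \eqref{eq:sigma_bound}), insert the truncation $\1_E$ ahead of each transport step, and iterate Duhamel twice while dropping the non-negative remainder. The only cosmetic difference is that you write the twice-iterated Duhamel formula explicitly before doing the replacements, whereas the paper iterates the one-step inequality.
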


\begin{proof}
  Call $(X_t(x,v), V_t(x,v))$ the solution to the backward
  characteristic equations obtained from the transport part of either
  \eqref{LBEharris2} or \eqref{LBEharris3}.
  Let us call
  \[ \Sigma(s,t,x,v) = e^{\int_s^t \kappa(V_r(x,v)) \mathrm{d}r}.
  \]
  Looking at Duhamel's formula again we get
  \begin{equation*}
    f(t,x,v)
    = \Sigma(0,t,x,v) T_{t} f_0
    + \int_0^t \Sigma(0,t-s,x,v) (T_{t-s} \mathcal{L}^+ f_s)(x,v) \d s
  \end{equation*}
  If a function $g = g(x,v)$ has support on the set
  $$E := \{(x,v) \,:\, H(x,v) \leq E_0 \},$$ then the same is true of
  $T_t g$ (since the transport part preserves energy). On the set $E$
  we have, using \eqref{eq:sigma_bound},
  \[
    \int_s^t \kappa(V_r(x,v)) \d r
    \leq (t-s)C \left( 1 + 2 E_0 \right)^{\gamma/2}
    =: (t-s)C_1,
    \qquad (x,v) \in E.
  \]
  Hence
  \begin{align*}
    f(t,x,v)
    &\geq \Sigma(0,t,x,v) T_{t} (\1_E f_0)
    + \int_0^t \Sigma(0,t-s,x,v) (T_{t-s} (\1_{E} \mathcal{L}^+ f_s))(x,v) \d s
    \\
    &\geq e^{-t C_1} T_{t} (\1_E f_0)
    + \int_0^t e^{-(t-s) C_1} (T_{t-s} (\1_{E} \mathcal{L}^+ f_s))(x,v) \d s
    \\
    &= e^{-t C_1} T_{t} f_0
    + \int_0^t e^{-(t-s) C_1} (T_{t-s} (\widetilde{\mathcal{L}}^+ f_s))(x,v) \d s,
  \end{align*}
  where we define
  \begin{equation*}
    \widetilde{\mathcal{L}}^+ g := \1_E \mathcal{L}^+ g.
  \end{equation*}
  Iterating this formula we obtain the result.
\end{proof}

The next lemma we want to prove is a local version of Lemma
\ref{lem:HL}, which states that the operator $\mathcal{L}^+$ allows
jumps between any two velocities with a probability which is bounded
below, provided the size of the two velocities is bounded by a fixed
number. In order to do this, let us first rewrite the operator
$\mathcal{L}^+$. We have that
\[ \mathcal{L}^+f = \int_{\mathbb{R}^d} \int_{\mathbb{S}^{d-1}}
  b\left(\frac{v-v_*}{|v-v_*|}\cdot \sigma \right)|v-v_*|^{\gamma}
  f(v') \mathcal{M}(v_*') \mathrm{d}\sigma \mathrm{d}v_*.
\]
Using the Carleman representation we rewrite this as
\[
  \mathcal{L}^+ f = \int_{\mathbb{R}^d} \frac{f(v')}{|v-v'|^{d-1}}
  \int_{E_{(v,v')}} B(|u|, \xi)\mathcal{M}(v_*')\mathrm{d}v_*' \d v',
\]
where $E_{(v,v')}$ denotes the hyperplane
$\{ v'_* \in \R^d \mid (v - v') \cdot (v - v'_*) = 0 \}$, and the
integral in $v_*'$ is understood to be with respect to
$(n-1)$-dimensional measure on this hyperplane. We want to bound this in
the manner of Lemma \ref{lem:HL} from the first part.  We look at hard
spheres and no angular dependence, which means
\[
  B(|u|, \xi) = C|u|^{\gamma} \xi^{d-2}
\]
with $\gamma \geq 0$. We also have that
\[ \xi = \frac{|v-v'|}{|2v-v'-v'_*|}, \hspace{10pt} |u|=
  |2v-v'-v'_*|.
\]
So we have that
\[
  \mathcal{L}^+f = \int_{\mathbb{R}^d} \frac{f(v')}{|v-v'|}
  \int_{E_{(v,v')}}|2v-v'-v_*'|^{\gamma -d -2} \mathcal{M}(v_*')
  \mathrm{d}v_*' \d v'.
\]
With this we can give the following lower bound of $\mathcal{L}^+$,
which the reader can compare to Lemma \ref{lem:HL}:

\begin{lem}
  \label{lem:L-Boltzmann}
  Consider the positive part $\mathcal{L}^+$ of the linear Boltzmann operator
  for hard spheres, assuming \eqref{eq:Bsplits} with $\gamma \geq 0$,
  and \eqref{eq:b-hyp}. For all $R_L, r_L > 0$, there exists
  $\alpha > 0$ such that for all $g \in \mathcal{P}$
  \[
    \mathcal{L}^+ g (v) \geq
    \alpha  \int_{B(R_L)} g(u) \d u
    \qquad \text{for all $v \in \R^d$ with $|v| \leq r_L$}.
  \]
\end{lem}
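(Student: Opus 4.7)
The plan is to exploit the Carleman representation of $\mathcal{L}^+$ that the authors derive immediately above the statement, and to show that its integrand admits a uniform positive lower bound on the region where $|v|\le r_L$ and $v'\in B(R_L)$. Starting from the $\sigma$-parametrization, I would first use the angular lower bound $b\ge C_b$ from \eqref{eq:b-hyp} to pull $C_b$ outside the integral, then apply the Carleman change of variables $(v_*,\sigma)\mapsto(v',v_*')$ to arrive at an inequality of the form
\[
\mathcal{L}^+ g(v) \ge C_b \int_{\R^d} \frac{g(v')}{|v-v'|} \int_{E_{v,v'}} |2v-v'-v_*'|^{\gamma-d-2}\,\mathcal{M}(v_*')\,\d v_*'\,\d v'.
\]
Since every factor is nonnegative, the inequality is preserved if the outer integration is restricted to $v'\in B(R_L)$; for such $v'$ and $|v|\le r_L$ we have $|v-v'|\le r_L+R_L$, hence $1/|v-v'|\ge 1/(r_L+R_L)$.

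For the inner integral I would parametrize $v_*'=v+w$ with $w$ in the $(d-1)$-dimensional subspace perpendicular to $v-v'$, so that $|2v-v'-v_*'|^2=|v-v'|^2+|w|^2$, and then restrict integration to the slab $|w|\le 1$. Because the Carleman exponent $(\gamma-d-2)/2$ is negative for all $\gamma\in[0,d+2)$ (which covers every physically relevant case), the integrand $(|v-v'|^2+|w|^2)^{(\gamma-d-2)/2}$ is a decreasing function of both $|w|$ and $|v-v'|$, so it is bounded below by $\bigl((r_L+R_L)^2+1\bigr)^{(\gamma-d-2)/2}$ uniformly on the restricted region. Coupled with the strictly positive lower bound $\inf_{|v|\le r_L,\,|w|\le 1}\mathcal{M}(v+w)>0$ and the fixed $(d-1)$-dimensional volume of the unit disk, the inner integral is bounded below by a positive constant $c_1=c_1(r_L,R_L,\gamma,d)$. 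Putting everything together yields the required inequality with $\alpha = C_b\, c_1/(r_L+R_L)$, depending only on $r_L$, $R_L$, $\gamma$, $d$ and $C_b$, and independent of $v$ and $g$.

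The principal technical point is the kinematic singularity at $v_*'=v$ on the hyperplane, encoded in the negative Carleman exponent. This singularity is harmless for a \emph{lower} bound: restricting to the compact slab $|w|\le 1$ makes the integrand bounded and monotone in the relevant parameters, so all constants are explicit and finite. If one were in the regime $\gamma\ge d+2$ the exponent would be nonnegative and the same strategy would work a fortiori by taking the minimum in the opposite direction. The only book-keeping needed is to verify that all constants depend only on $r_L,R_L,d,\gamma,C_b$, which is immediate from the monotonicity argument.
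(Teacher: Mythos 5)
Your proposal is correct and follows the same overall strategy as the paper: both start from the Carleman representation of $\mathcal{L}^+$ displayed just before the lemma, bound the inner hyperplane integral below by a quantity controlled in terms of $|v|$ and $|v-v'|$, and then restrict the outer integral to $v'\in B(R_L)$. The only real difference is in how the inner integral is handled. The paper dominates the power-law kernel from below by a Gaussian, using $|2v-v'-v_*'|^{-d-2}\geq C_d\exp\bigl(-\tfrac12|v-v'|^2-\tfrac12|v-v_*'|^2\bigr)$ together with $|2v-v'-v_*'|^\gamma\geq|v-v_*'|^\gamma$, and then integrates explicitly over the hyperplane; this yields a global pointwise lower bound $\mathcal{L}^+g(v)\geq Ce^{-2R_L^2}e^{-3|v|^2}\int_{B(R_L)}g$ valid for \emph{all} $v$, from which the lemma follows by restricting to $|v|\leq r_L$. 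You instead truncate the hyperplane integral to the compact slab $|w|\leq1$ and use monotonicity of $(|v-v'|^2+|w|^2)^{(\gamma-d-2)/2}$; this is more elementary and gives cleaner constants, at the price of only producing the bound on $|v|\leq r_L$ (which is all the lemma asks for).

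One small point to tighten: your treatment of the regime $\gamma\geq d+2$, where the exponent becomes nonnegative, is stated imprecisely. ``Taking the minimum in the opposite direction'' over the region $|v-v'|\leq r_L+R_L$, $|w|\leq1$ gives the value at the corner $v'=v$, $w=0$, which is zero and hence useless. The fix is easy — restrict $|w|$ to an annulus such as $\tfrac12\leq|w|\leq1$ so that $(|v-v'|^2+|w|^2)^{(\gamma-d-2)/2}\geq(1/2)^{\gamma-d-2}$ — but as written the parenthetical does not quite work. Since the physically relevant range is $\gamma\in[0,1]$, and the paper's own Gaussian bound sidesteps the sign of the exponent in essentially the same way (it too relies on the exponent $-d-2$ being negative for the factor it Gaussian-dominates), this is a cosmetic rather than a substantive defect.
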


\begin{proof} 
  First we note that on $E_{(v,v')}$ we have
  \[ |2v -v' -v_*'|^{-d-2} \geq C_d \exp \left( - \frac{1}{2}|v-v_*'|^2 - \frac{1}{2}|v-v'|^2 \right). \] Then since $\gamma \geq 0$ we have
  \[ |2v-v'-v_*'|^\gamma = \left(|v-v'|^2 + |v-v'_*|^2 \right)^{\gamma/2} \geq |v-v'_*|^\gamma.  \] So this means that
  \begin{align*}
    \int_{E_{(v,v')}} |2v-v'-&v'_*|^{\gamma -d -2} \mathcal{M}(v_*')
    \mathrm{d}v_*'
    \\&\geq C e^{-|v-v'|^2/2} \int_{E_{(v,v')}}
                          |v-v_*'|^\gamma\exp \left(-
                          \frac{1}{2}|v-v'_*|^2 - \frac{1}{2}|v_*'|^2
                          \right)\mathrm{d}v_*'
    \\
    &\geq C e^{-|v-v'|^2/2 - |v|^2/2}\int_{E_{(v,v')}}|v-v_*'|^\gamma e^{-|v-v'_*|^2} \mathrm{d}v_*'\\
    &=  C' e^{-|v-v'|^2/2 - |v|^2/2}.
  \end{align*}
  So we have that
  \begin{align*}
    \mathcal{L}^+ f (v) & \geq C \int_{\mathbb{R}^d} f(v') |v-v'|^{-1}
                 e^{-|v-v'|^2/2 - |v|^2/2}\mathrm{d}v'
    \\
    & \geq C \int_{\mathbb{R}^d} f(v') e^{-2|v'|^2-3|v|^2} \d v'\\
    & \geq C e^{-2R_L^2} e^{-3|v|^2} \int_{B(0,R_L)} f(v') \mathrm{d}v',
  \end{align*}
  which is a similar bound to the one we found in Lemma
  \ref{lem:HL}. This gives the result by choosing
  $\alpha := C \exp({-2R_L^2 - 3|r_L|^2})$.
\end{proof}

\subsection{On the torus}

Now we work specifically on the torus. For the minorisation we can
argue almost exactly as for the linear relaxation Boltzmann equation.

\begin{lem}[Doeblin condition]
  \label{lem:HL2}
  Assume \eqref{eq:Bsplits} with $\gamma \geq 0$, and
  \eqref{eq:b-hyp}. Given $t_* > 0$ and $R > 0$ there exist constants
  $0 < \alpha < 1$, $\delta_L > 0$ such that any solution
  $f = f(t,x,v)$ to the linear Boltzmann equation \eqref{LBEharris2}
  on the torus with initial condition $f_0 = \delta_{(x_0,v_0)}$ with
  $|v_0| \leq R$ satisfies
  \[ f(t_*,x,v) \geq \alpha \1_{\{|v| \leq \delta_L\} } \]
in the sense of measures.
\end{lem}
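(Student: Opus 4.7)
The plan is to mimic the three-step Duhamel argument from Lemma \ref{lem:Doeblin_LBGK} (the linear relaxation Boltzmann case on the torus), replacing Lemma \ref{lem:HL} by Lemma \ref{lem:L-Boltzmann} and using Lemma \ref{lem:reducing} to tame the loss term $-\kappa(v) f$. Since the initial datum $f_0 = \delta_{(x_0, v_0)}$ satisfies $H(x_0, v_0) = |v_0|^2/2 \le R^2/2$, I will choose an energy threshold $E_0$ larger than both $R^2/2$ and $\delta_L^2/2$ (with $\delta_L$ to be fixed below), so that both $(x_0, v_0)$ and the low-velocity slab $\{|v| \le \delta_L\}$ lie in $E = \{H \leq E_0\}$. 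Lemma \ref{lem:reducing} then yields
\[ f(t_*, x, v) \geq e^{-t_* C_1} \int_0^{t_*} \int_0^s T_{t_*-s} \widetilde{\mathcal{L}}^+ T_{s-r} \widetilde{\mathcal{L}}^+ T_r f_0 \d r \d s. \]

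I will iterate bounds in three stages, in direct analogy with the linear relaxation Boltzmann proof. First, $T_r f_0 = \delta_{(x_0 - v_0 r, v_0)}$, and applying Lemma \ref{lem:L-Boltzmann} with $r_L = \delta_L$ and $R_L > R$ (so that $v_0 \in B(R_L)$) gives
\[ \widetilde{\mathcal{L}}^+ T_r f_0(x,v) \geq \alpha_1 \, \delta_{x_0 - v_0 r}(x) \, \1_{\{|v| \leq \delta_L\}}, \]
the $\1_E$ cutoff being harmless because $\{|v| \leq \delta_L\} \subseteq E$. Next, I fix $\delta_L$ and $R'$ via Lemma \ref{lem:HT} applied with $t_0 = t_*/3$ and some radius larger than $\sqrt{d}$, so that for $s - r \geq t_0$ one has
\[ \int_{B(R')} T_{s-r}\bigl( \delta_{x_0 - v_0 r}(x)\, \1_{\{|v| \leq \delta_L\}} \bigr) \d v \geq \frac{1}{(s-r)^d}. \]
A second application of Lemma \ref{lem:L-Boltzmann} with $r_L = \delta_L$ and now $R_L \geq R'$ then yields
\[ \widetilde{\mathcal{L}}^+ T_{s-r} \widetilde{\mathcal{L}}^+ T_r f_0(x, v) \geq \frac{\alpha_1 \alpha_2}{(s-r)^d}\, \1_{\{|v| \leq \delta_L\}}, \]
which is independent of $x$. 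The final transport $T_{t_*-s}$ acts trivially on this function (it is constant in $x$ and supported in $v$), so integrating over $0 \leq r \leq t_0$ and $2t_0 \leq s \leq t_*$ produces the announced minorisation with a positive constant $\alpha$ proportional to $e^{-t_* C_1} \alpha_1 \alpha_2 t_*^{2-d}$.

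The main technical obstacle is the bookkeeping of the three parameters $E_0$, $\delta_L$, $R_L$, which must be selected in a consistent order. Concretely, one first chooses $\delta_L$ and $R'$ via Lemma \ref{lem:HT}, then picks $E_0$ larger than $\max(R^2, \delta_L^2)/2$ to guarantee that neither the initial Dirac nor the low-velocity indicator is cut off by $\1_E$, and finally sets $R_L \geq \max(R, R')$. The constant $C_1$ from Lemma \ref{lem:reducing} depends on $E_0$ and $\gamma$ through the bound \eqref{eq:sigma_bound} but remains finite, so the exponential factor $e^{-t_* C_1}$ is harmless. Beyond this ordering, the argument is essentially identical to the linear relaxation Boltzmann case, and all constants are quantitative.
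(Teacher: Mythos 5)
Your proposal is correct and follows essentially the same route as the paper: the paper likewise combines Lemma \ref{lem:reducing} (with $E_0 = \max\{R^2/2,\delta_L^2/2\}$), Lemma \ref{lem:L-Boltzmann} (with $R_L = \max\{R,R'\}$), and Lemma \ref{lem:HT} (with $t_0 = t_*/3$), and then repeats the iteration of Lemma \ref{lem:Doeblin_LBGK} verbatim. Your explicit ordering of the parameter choices and the tracking of the two constants $\alpha_1,\alpha_2$ match the paper's argument in substance.
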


\begin{proof}
  Take $f_0 := \delta_{(x_0,v_0)}$, where
  $(x_0,v_0) \in \T^d \times \R^d$ is an arbitrary point with
  $|v_0| \leq R$.  From Lemma \ref{lem:HT} (with $R > \sqrt{d}$ and
  $t_0 := t_*/3$) we will use that there exist $\delta_L, R' > 0$ such
  that
  \begin{equation}
    \label{eq:BT1}
    \int_{B(R')} T_{t} \Big(\delta_{x_0}(x) \1_{\{|v| \leq \delta_L\}}\Big) \d v
    \geq
    \frac{1}{t^d}
    \qquad \text{for all $x_0 \in \T^d$, $t > t_0$}.
  \end{equation}
  Also, Lemma \ref{lem:L-Boltzmann} gives an $\alpha > 0$ such that
  \begin{equation}
    \label{eq:BT2}
    \mathcal{L}^+ g \geq \alpha
    \left( \int_{B(R_L)} g(x,u) \d u\right) \, \1_{\{|v| \leq \delta_L\}},
  \end{equation}
  where $R_L := \max\{R', R\}$.  Finally, from Lemma
  \ref{lem:reducing} we can find $C_1 > 0$ (depending on $R$) such
  that
  \begin{equation*}
    f(t,x,v) \geq e^{-t C_1} \int_0^t
    \int_0^s T_{t-s} \widetilde{\mathcal{L}}^+ T_{s-r} \widetilde{\mathcal{L}}^+
    T_r (\1_{E} \delta_{(x_0,v_0)})   \d r \d s,
  \end{equation*}
  where $E$ is the set of points with energy less than $E_0$, with
  \begin{equation*}
    E_0 := \max\{ R^2/2, \delta_L^2/2\},
  \end{equation*}
  and we recall that $\widetilde{\mathcal{L}}^+ f := \1_E \mathcal{L}^+ f$. Due to our
  choice of $E_0$, we see that equation \eqref{eq:BT1} also holds with
  $\widetilde{\mathcal{L}}^+$ in the place of $\mathcal{L}^+$. One can then carry out the
  same proof as in Lemma \ref{lem:Doeblin_LBGK}, using estimates
  \eqref{eq:BT1} and \eqref{eq:BT2} instead of the corresponding ones
  there.
\end{proof}

Since our Doeblin condition holds only on sets which are bounded in
$|v|$, we do need a Lyapunov functional in this case (as opposed to
the linear relaxation Boltzmann equation, where Lemma \ref{lem:Doeblin_LBGK} gives a
lower bound for all starting conditions $(x,v)$). Testing with $V=v^2$
involves proving a result similar to the moment control result from
\cite{BCL15}. Instead of the $\sigma$ representation we use the
$n$-representation for the collisions:
\begin{equation*}
  v' = v - n (u \cdot n),
  \qquad
  v'_* = v_* + n (u \cdot n).
\end{equation*}
By our earlier assumption, the collision kernel can be written as
\begin{equation*}
  \tilde{B}(|v-v_*|, |\xi|) = |v-v_*|^\gamma \tilde{b}(|\xi|),
\end{equation*}
where
\begin{equation*}
  \xi := \frac{u \cdot n}{|u|},
  \qquad
  u := v-v_*.
\end{equation*}
Here the $\tilde{B},\tilde{b}$ are different from those in the $\sigma$
representation because of the change of variables.
We also have by assumption that $\tilde{b}$ is normalised, that is,
\begin{equation*}
  \int_{\mathbb{S}^d} \tilde{b}(|w \cdot n|) \d n = 1
\end{equation*}
for all unit vectors $w \in \S^{d-1}$.

\begin{lem} \label{lem:HT-Boltzmann}
The function $V(x,v) = |v|^2$ is a Lyapunov function for the linear Boltzmann equation on the torus in the sense that it is a function for which the associated semigroup satisfies Hypothesis \ref{confinement}.
\end{lem}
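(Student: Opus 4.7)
The plan is to verify Hypothesis \ref{confinement} via the pointwise formulation of Remark \ref{rem:Lyapunov-equivalent}: it suffices to find constants $\lambda > 0$ and $K \geq 0$ such that $\UU V \leq -\lambda V + K$. Since $V(x,v) = |v|^2$ depends only on $v$, the transport term $v \cdot \nabla_x V$ vanishes, and the problem reduces to showing that
\[
\mathcal{L}^* |v|^2 (v) := \int_{\R^d}\int_{\S^{d-1}} B(|v-v_*|,\sigma)\,\M(v_*)\,(|v'|^2 - |v|^2)\,\d\sigma\,\d v_* \leq -\lambda |v|^2 + K,
\]
where $\mathcal{L}^*$ is the adjoint of the collision operator from \eqref{LBEharris2} acting on the observable $v \mapsto |v|^2$.

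The first step is an explicit angular integration. Working in the $n$-representation $v' = v - n(u\cdot n)$ with $u := v - v_*$, one computes directly that $|v'|^2 - |v|^2 = -(u\cdot n)\,(v+v_*)\cdot n$. By rotational symmetry of the measure $\tilde b(|u\cdot n|/|u|)\,\d n$ about the axis $\hat u := u/|u|$, the angular integral of a bilinear form $n \mapsto (u\cdot n)(w\cdot n)$ is proportional to $(u\cdot\hat u)(w\cdot\hat u)$; applied with $w = v+v_*$, this gives $(v-v_*)\cdot(v+v_*) = |v|^2 - |v_*|^2$. One therefore obtains an identity of the form
\[
\int_{\S^{d-1}} \tilde b(|u\cdot n|/|u|)\,(|v'|^2 - |v|^2)\,\d n = \mu\,(|v_*|^2 - |v|^2),
\]
for an explicit constant $\mu$, whose strict positivity $\mu > 0$ follows from the pointwise lower bound \eqref{eq:b-hyp} on the angular kernel. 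Substituting back produces
\[
\mathcal{L}^* |v|^2 (v) = \mu \int_{\R^d} |v-v_*|^\gamma\, \M(v_*)\,(|v_*|^2 - |v|^2)\,\d v_*.
\]

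It remains to estimate this radial integral. For the positive contribution, the elementary bound $|v-v_*|^\gamma \leq 2^\gamma(|v|^\gamma + |v_*|^\gamma)$ combined with the Gaussian moments of $\M$ yields $\int |v-v_*|^\gamma \M(v_*)|v_*|^2\,\d v_* \leq C(1+|v|^\gamma)$. For the negative contribution, the uniform lower bound $\int |v-v_*|^\gamma \M(v_*)\,\d v_* \geq c_0 > 0$ (in fact $\geq c_0(1+|v|)^\gamma$ for large $|v|$) follows by splitting the $v_*$-integral at $|v_*| = |v|/2$ and using the Gaussian tail of $\M$. Combining these estimates yields $\mathcal{L}^* |v|^2 \leq -\lambda |v|^2 + K$ for explicit $\lambda, K > 0$, completing the verification of the Lyapunov condition. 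The main obstacle is the clean angular computation and the verification that $\mu > 0$, both of which rely crucially on the pointwise positivity \eqref{eq:b-hyp} of $b$; the remaining radial moment estimates are routine Gaussian bounds, and in the hard-potential case $\gamma > 0$ they actually produce the stronger bound $\mathcal{L}^* |v|^2 \leq -\lambda |v|^{2+\gamma} + K$, although only the weaker form is needed here.
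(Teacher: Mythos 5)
Your proof is correct and arrives at the same final estimate, but the key angular step is handled by a genuinely different (and cleaner) computation than the paper's. The paper expands $|v'|^2-|v|^2 = -|v|^2\xi^2-|v_*|^2\xi^2+2\,v\cdot v_*\,\xi^2-2(v_*\cdot n)(v\cdot n)+2(v_*\cdot n)^2$ and then estimates each of the resulting radial integrals separately, keeping only the first term as the good negative one and absorbing the cross terms with a small-$\epsilon$ argument. You instead integrate out $n$ exactly: from $|v'|^2-|v|^2=-(u\cdot n)\,(v+v_*)\cdot n$ and the symmetry of the measure $\tilde b(|\hat u\cdot n|)\,\d n$ you obtain the identity $\int_{\S^{d-1}}\tilde b(|\xi|)\,(|v'|^2-|v|^2)\,\d n=\gamma_b\,(|v_*|^2-|v|^2)$ with $\gamma_b=\int\xi^2\tilde b(|\xi|)\,\d n>0$ (the same constant the paper introduces), so all cross terms cancel before any estimation; what each approach "buys" is that yours reduces the problem to a single radial integral, at the price of a slightly more delicate symmetry argument. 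On that point, one caveat: your statement that the angular integral of $(u\cdot n)(w\cdot n)$ is proportional to $(u\cdot\hat u)(w\cdot\hat u)$ is not true for arbitrary pairs of vectors --- the matrix $\int n\otimes n\,\tilde b(|\hat u\cdot n|)\,\d n$ has the form $a\,\hat u\otimes\hat u+b\,(I-\hat u\otimes\hat u)$, so in general there is an extra transverse contribution proportional to $u\cdot w-(u\cdot\hat u)(w\cdot\hat u)$. It vanishes in your application only because one of the two vectors is $u$ itself, which lies on the symmetry axis; you should say so explicitly. After the angular step both arguments coincide, using the two-sided Gaussian moment bounds $A_k(1+|v|^\gamma)\le\int|v_*|^k\M(v_*)|v-v_*|^\gamma\,\d v_*\le C_k(1+|v|^\gamma)$; note that for $\gamma\ge 2$ your parenthetical strengthening of the lower bound to order $(1+|v|)^\gamma$ is actually needed (the constant $c_0$ alone only handles $\gamma<2$), but since you state it, the argument is complete.
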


\begin{proof} We define $\mathcal{L}$ to be the linear Boltzmann operator.
  Using the weak formulation of the operator,
  \begin{equation*}
    \int_{\mathbb{R}^d} \mathcal{L}(f) |v|^2 \d v
    =
    \int_{\mathbb{R}^d} \int_{\mathbb{R}^d} \int_{\mathbb{S}^{d-1}} f(v) \M(v_*) |v-v_*|^\gamma \tilde{b}(|\xi|)
    (|v'|^2 - |v|^2) \d n \d v \d v_*.
  \end{equation*}
  In other words,
  \begin{equation*}
    \LL^* (|v|^2)
    =
    \int_{\mathbb{R}^d} \int_{\mathbb{S}^{d-1}} \M(v_*) |v-v_*|^\gamma \tilde{b}(|\xi|)
    (|v'|^2 - |v|^2) \d n \d v_*.
  \end{equation*}
  We are going to prove the Lyapunov condition by showing that
  \begin{equation*}
    \int_{\R^d} \int_{\R^d} (\LL(f) + \TT(f)) |v|^2 \d x \d v
    \leq
    -\lambda \int_{\R^d} \int_{\R^d} f |v|^2  \d x \d v
    + K \int_{\R^d} \int_{\R^d} f  \d x \d v,
  \end{equation*}
  where $\TT f = -v \nabla_x f$ is the transport operator. The transport part plays no role, since
  \begin{equation*}
    \int_{\R^d} \int_{\R^d} \TT(f) |v|^2  \d x \d v = 0.
  \end{equation*}
  For the collisional part, we notice that
  \begin{align*}
    |v'|^2 - |v|^2
    &= |v_*|^2 - |v'_*|^2
    = - (u\cdot n)^2 - 2 (v_* \cdot n) (u \cdot n)
      \\
    &= - |u|^2 \xi^2 - 2 (v_* \cdot n) (v \cdot n)
      + 2 (v_* \cdot n)^2
      \\
    &= - |v|^2 \xi^2 - |v_*|^2 \xi^2
      + 2v \cdot v_* \xi^2
      - 2 (v_* \cdot n) (v \cdot n)
    + 2 (v_* \cdot n)^2.
  \end{align*}
  Note that the first term is negative and quadratic in $v$, and the
  rest of the terms are of lower order in $v$. Hence, calling
    $\gamma_b := \int_{\mathbb{S}^{d-1}} \xi^2 \tilde{b}(|\xi|) \d \xi $
  we have
  \begin{align*}
    \int_{\mathbb{R}^d} \mathcal{L}(f) |v|^2 \d v
    =
    &- \gamma_b \int_{\mathbb{R}^d} |v|^2 f(v) \int_{\mathbb{R}^d} \M(v_*) |v-v_*|^\gamma \d v_* \d v
    \\
    &- \gamma_b \ird f(v) \int_{\mathbb{R}^d} |v_*|^2 \M(v_*) |v-v_*|^\gamma \d v_* \d v
    \\
    &+ 2\gamma_b \int_{\mathbb{R}^d} v f(v) \int_{\mathbb{R}^d} v_* \M(v_*) |v-v_*|^\gamma \d v_* \d v
    \\
    &- 2 \isd\int_{\mathbb{R}^d} (v \cdot n) f(v) \int_{\mathbb{R}^d} (v_* \cdot n) \M(v_*) |v-v_*|^\gamma \d v_* \d v \d n
    \\
    &+\isd \int_{\mathbb{R}^d} f(v) \int_{\mathbb{R}^d} (v_* \cdot n)^2 \M(v_*) |v-v_*|^\gamma \d v_* \d v\d n
    \\
    \leq
    & - \gamma_b \int_{\mathbb{R}^d} |v|^2 f(v) \int_{\mathbb{R}^d} \M(v_*) |v-v_*|^\gamma \d v_* \d v
    \\
    &+ (2 + \gamma_b) \int_{\mathbb{R}^d} |v| f(v) \int_{\mathbb{R}^d} |v_*| \M(v_*) |v-v_*|^\gamma \d v_* \d v
    \\
    &+ \int_{\mathbb{R}^d} f(v) \int_{\mathbb{R}^d} |v_*|^2 \M(v_*) |v-v_*|^\gamma \d v_* \d v.
  \end{align*}
  We can now use the following bound, which holds for all $k \geq 0$
  and some constants $0 < A_k \leq C_k$ depending on $k$:
  \begin{equation*}
    A_k (1 + |v|^\gamma)
    \leq
    \int_{\mathbb{R}^d} |v_*|^k \M(v_*) |v-v_*|^\gamma \d v_*
    \leq
    C_k (1 + |v|^\gamma),
    \qquad v \in \R^d.
  \end{equation*}
  Choosing $\epsilon > 0$ we get
  \begin{align*}
    \int_{\mathbb{R}^d} \mathcal{L}(f) |v|^2 \d v
    \leq
    & - A_0 \gamma_b \int_{\mathbb{R}^d} |v|^2 (1 + |v|^\gamma) f(v) \d v    + C_1 (2 + \gamma_b) \int_{\mathbb{R}^d} |v| (1 + |v|^\gamma) f(v) \d v
    \\
    &+ C_2 \int_{\mathbb{R}^d} f(v) (1 + |v|^\gamma) \d v \\
\leq &  \int_{\mathbb{R}^d} f(v) (C_2 + C_1(1+\gamma_b/2)/\epsilon)\left( 1 + |v|^\gamma\right) \mathrm{d}v \\
& - (A_0 \gamma_b - \epsilon C_1(1+ \gamma_b/2))\int_{\mathbb{R}^d} |v|^2(1+ |v|^\gamma) f(v) \mathrm{d}v \\
\leq &  \ird  \left( C_2 + C_1 (1+ \gamma_b/2)/\epsilon + (\epsilon C_1 (1+ \gamma_b/2) - A_0 \gamma_b)|v|^2\right)(1+|v|^\gamma) f(v) \mathrm{d}v \\
& - (A_0 \gamma_b - \epsilon C_1(1+ \gamma_b/2)) \ird |v|^2 f(v) \mathrm{d}v \\
\leq & \alpha_1 \ird f(v) \mathrm{d}v - \alpha_2 \ird |v|^2 f(v) \mathrm{d}v.
  \end{align*}
Here we choose $\epsilon$ sufficiently small to make the constant in front of the second moment negative. This also means that 
\[ (C_2 + C_1(1+ \gamma_b/2)/\epsilon + (\epsilon C_1(1+ \gamma_b/2) -
  A_0 \gamma_b ))|v|^2(1+ |v|^\gamma) \] is bounded above. These things
together give that
\begin{equation*}
  \int_{\R^d} \int_{\R^d} (\LL(f) + \TT(f)) |v|^2 \d x \d v
  \leq
  - \alpha_2 \int_{\R^d} \int_{\R^d} f |v|^2  \d x \d v
  + \alpha_1 \int_{\R^d} \int_{\R^d} f  \d x \d v,
\end{equation*}
which finishes the proof.
\end{proof}

\begin{proof}[Proof of Theorem \ref{thm:main-torus} in the case of the linear Boltzmann equation]
  We have the Doeblin condition from Lemma \ref{lem:HL2} and the
  Lyapunov structure from Lemma
  \ref{lem:HT-Boltzmann}. Harris's Theorem gives the result.
\end{proof}

\subsection{On the whole space with a confining potential}

We now work on the whole space with a confining potential. As we
stated earlier, we cannot verify the Lyapunov condition in the hard
spheres case. However, the proof for the Doeblin's condition is the
same in the hard sphere or Maxwell molecule case. We need to combine
the Lemmas \ref{lem:HTshooting}, \ref{lem:reducing} and \ref{lem:L-Boltzmann}.

\begin{lem}
  \label{lem:LBEwholespaceminorisation}
  Let the potential $\Phi \: \R^d \to \R$ be a $\mathcal{C}^2$
  function with compact level sets. Given $t > 0$ and $K > 0$ there
  exist constants $\alpha, \delta_X, \delta_V > 0$ such that for any
  $(x_0,v_0)$ with $|x_0|, |v_0| < K$ the solution $f$ to
  \eqref{LBEharris3} with initial data $\delta_{(x_0, v_0)}$ satisfies
  \[ f_t \geq \alpha \1_{ \{ |x| \leq \delta_X \}} \1_{ \{ |v| \leq \delta_V \}}. \]
\end{lem}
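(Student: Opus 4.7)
The plan is to mirror almost line-for-line the proof of Lemma \ref{lem:LBEdoeblin}, replacing the positive part of the linear relaxation Boltzmann operator by that of the linear Boltzmann operator and using Lemma \ref{lem:reducing} to absorb the velocity-dependent loss term $\kappa(v) f$. First I fix $t, K > 0$ and set
\[ E_0 := \sup\{H(x,v) : |x| \leq K,\ |v| \leq K\},
   \qquad R := \max\{|x| : \Phi(x) \leq E_0\}. \]
The second supremum is finite because $\Phi$ has compact level sets (being $\mathcal{C}^2$ and bounded below with the growth hypothesis implicit in the confining setting). The key point is that the support of $f_0 = \delta_{(x_0,v_0)}$ lies inside $E := \{H \leq E_0\}$, and since the Hamiltonian flow preserves $H$, so does the support of $T_r f_0$ for every $r \geq 0$.

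Apply Lemma \ref{lem:reducing} with this $E_0$ to obtain a constant $C_1 > 0$ and the Duhamel-type lower bound
\[ f(t,x,v) \geq e^{-tC_1} \int_0^t \int_0^s T_{t-s} \widetilde{\mathcal{L}}^+ T_{s-r} \widetilde{\mathcal{L}}^+ T_r(\1_E f_0) \d r \d s, \]
where $\widetilde{\mathcal{L}}^+ g = \1_E \mathcal{L}^+ g$. Next apply Lemma \ref{lem:HTshooting} with the radius $R$ above to produce a time $t_1 > 0$, an interval $[a,b] \subset (0,t_1)$ (chosen with $b \leq t$), and constants $R', R_2, \alpha_T > 0$ such that the shooting estimate
\[ \int_{B(R')} T_s\big(\delta_{x_0} \1_{\{|v| \leq R_2\}}\big) \d v \geq \alpha_T \1_{\{|x| \leq R\}} \]
holds for every $s \in [a,b]$ and every $|x_0| \leq R$. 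Finally apply Lemma \ref{lem:L-Boltzmann} with $R_L := \max(R', R_2)$ and $r_L := R_2$ to obtain $\alpha_L > 0$ with $\mathcal{L}^+ g(v) \geq \alpha_L \int_{B(R_L)} g(u) \d u$ for $|v| \leq r_L$.

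Now iterate exactly as in Lemma \ref{lem:LBEdoeblin}. For any $r \geq 0$, $T_r f_0$ is the Dirac at $(X_r(x_0,v_0), V_r(x_0,v_0))$, with $|X_r| \leq R$ because energy is conserved. Applying Lemma \ref{lem:L-Boltzmann} then the shooting estimate then Lemma \ref{lem:L-Boltzmann} once more yields, for $s - r \in [a,b]$,
\[ \widetilde{\mathcal{L}}^+ T_{s-r} \widetilde{\mathcal{L}}^+ T_r f_0
   \geq \alpha_L^2 \alpha_T\, \1_{\{|x| \leq R\}}\, \1_{\{|v| \leq R_2\}}; \]
the $\1_E$ cutoff is free of charge because after the cutoff $|v| \leq R_2$ and $|x| \leq R$ we have $H(x,v) \leq \Phi(x) + R_2^2/2 \leq E_0$ provided $E_0$ was chosen sufficiently large (otherwise enlarge $E_0$ and redo the definition of $R$; the compact level set assumption keeps $R$ finite). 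A short-time flow continuity argument, of the same type used to prove \eqref{eq:p1}, gives an $\epsilon > 0$ such that for $0 \leq \tau \leq \epsilon$
\[ T_\tau\big(\1_{\{|x| \leq R\}} \1_{\{|v| \leq R_2\}}\big)
   \geq \1_{\{|x| \leq R/2\}} \1_{\{|v| \leq R_2/2\}}. \]
Integrating over $r \in [0, b-a]$, $s \in [b, t]$ with $s - r \in [a,b]$ and $t - s \in [0, \epsilon]$ (possibly shrinking the interval $[a,b]$ so that $t - b \leq \epsilon$) produces a strictly positive multiple of $\1_{\{|x| \leq R/2\}} \1_{\{|v| \leq R_2/2\}}$, which is the desired minorisation with $\delta_X := R/2$, $\delta_V := R_2/2$.

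The main obstacle is the compatibility bookkeeping among the radii $R, R', R_2, R_L, r_L$ and the energy threshold $E_0$, so that each intermediate function in the iteration is supported inside $E$ (allowing us to replace $\mathcal{L}^+$ by $\widetilde{\mathcal{L}}^+$ without loss) while simultaneously remaining in the regime where both Lemma \ref{lem:HTshooting} and Lemma \ref{lem:L-Boltzmann} apply. This is managed by first choosing $R$ from the initial data, then using the three lemmas in sequence to fix the remaining parameters, and only afterwards enlarging $E_0$ if necessary so that the post-collisional support $\{|x| \leq R\} \cap \{|v| \leq R_2\}$ sits well inside $E$.
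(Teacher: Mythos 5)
Your proposal is correct and follows essentially the same route as the paper: fix $R$ from the energy of the initial data via the compact level sets of $\Phi$, combine Lemma \ref{lem:HTshooting}, Lemma \ref{lem:L-Boltzmann} and Lemma \ref{lem:reducing}, and iterate Duhamel exactly as in Lemma \ref{lem:LBEdoeblin}. One caution: your parenthetical suggestion to ``enlarge $E_0$ and redo the definition of $R$'' would be circular as stated (a larger $R$ changes $R_2$ via the shooting lemma, hence $E_0$ again); the paper avoids this --- as your own closing paragraph also does --- by keeping $R$ tied only to the initial energy bound $H_{\max}(K)$ and choosing $E_0$ last, after $R'$ and $R_2$ have been fixed.
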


\begin{proof}
  We fix $R > 0$ as in Lemma \ref{lem:LBEdoeblin}. We use Lemma
  \ref{lem:HTshooting} to find constants $\alpha, R_2, R' > 0$ and an
  interval $[a,b] \subseteq (0,t)$ such that
  \begin{equation*}
    \int_{B(R')} T_s (\delta_{x_0} \1_{\{|v| \leq R_2 \}} ) \d v
    \geq
    \alpha \1_{\{|x| \leq R\}},
  \end{equation*}
  for any $x_0$ with $|x_0| \leq R$ and any $s \in [a,b]$. From Lemma
  \ref{lem:L-Boltzmann} we will use that there exists a constant
  $\alpha_L > 0$ such that
  \begin{equation}
    \label{eq:HL2}
    \mathcal{L}^+ g(x,v) \geq
    \alpha_L \left( \int_{R_L} g(x,u) \d u \right) \1_{\{|v| \leq R_2\}}
  \end{equation}
  for all nonnegative measures $g$, where $R_L := \max\{R, R'\}$. From
  Lemma \ref{lem:reducing} we can find $C_1 > 0$ (depending on $R$) such
  that
  \begin{equation*}
    f(t,x,v) \geq e^{-t C_1} \int_0^t
    \int_0^s T_{t-s} \widetilde{\mathcal{L}}^+ T_{s-r} \widetilde{\mathcal{L}}^+
    T_r (\1_{E} \delta_{(x_0,v_0)})   \d r \d s,
  \end{equation*}
  where $E$ is the set of points with energy less than $E_0$, with
  \begin{equation*}
    E_0 := \max\{ H(x,v) \,:\, |x| \leq R, |v| \leq  \max \{R_L, R_2\}  \},
  \end{equation*}
  and we recall that $\widetilde{\mathcal{L}}^+ f := \1_E \mathcal{L}^+ f$. These three
  estimates allow us to carry out a proof which is completely
  analogous to that of Lemma \ref{lem:LBEdoeblin}; notice that the
  only difference is the appearance of $R'$ here, and the need to use
  $\widetilde{\mathcal{L}}^+$ (which still satisfies a bound of the same type).
\end{proof}

Now we need to find a Lyapunov functional. As before we will look at $V$ of the form
\[ V(x,v) = \Phi(x) + \frac{1}{2}|v|^2 + \alpha x \cdot v + \beta |x|^2. \]
for some $\alpha, \beta>0$. In this case we need a stronger bound for $\Phi$, as stated in the
following:
\begin{lem} \label{lem:LBEwholespacelyapunov}
  Assume that for some $\gamma_1, \gamma_2 >0$ and some $A \in \R$ we have,
  \[ x \cdot \nabla_x \Phi(x) \geq \gamma_1 \langle x \rangle^{\gamma+2} + \gamma_2 \Phi(x) -A,\]
  where $\gamma$ is the exponent in \eqref{eq:B-splits}.
  We can find $\alpha, \beta$ such that
  \[ V(x,v) = \Phi(x) + \frac{1}{2}|v|^2 + \alpha x \cdot v +
    \beta|x|^2 \] is a function for which the semigroup associated to
  equation \eqref{LBEharris3} satisfies Hypothesis \ref{confinement}.
\end{lem}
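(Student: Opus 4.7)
The plan is to follow closely the structure of the proof of Lemma \ref{lem:LBEconfining}, but pay attention to the error terms generated by the hard-sphere collisions. Write $\UU \phi = v\cdot\nabla_x\phi - \nabla_x\Phi\cdot\nabla_v\phi + \LL^*\phi$, where $\LL^*$ is now the adjoint of the linear Boltzmann operator. Since $\Phi$ and $|x|^2$ depend only on $x$, and using the transport identities $\TT^*(H)=0$, $\TT^*(x\cdot v)=|v|^2-x\cdot\nabla\Phi$, $\TT^*(|x|^2)=2x\cdot v$, exactly as in Lemma \ref{lem:LBEconfining}, we obtain
\begin{equation*}
\UU V = \tfrac12 \LL^*(|v|^2) + \alpha\bigl(|v|^2 - x\cdot\nabla\Phi + x\cdot \LL^*(v)\bigr) + 2\beta\, x\cdot v.
\end{equation*}
The first new feature compared with the BGK case is that $\LL^*(|v|^2)$ and $\LL^*(v)$ are no longer polynomial in $v$: they have growth governed by the hard-sphere factor $|v-v_*|^{\gamma}$.

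First I would derive the pointwise bounds on the collisional terms. Repeating the computation in Lemma \ref{lem:HT-Boltzmann} pointwise instead of in integral form, together with the two-sided bound $A_k(1+|v|^\gamma)\le \int |v_*|^k \M(v_*) |v-v_*|^\gamma \, dv_* \le C_k(1+|v|^\gamma)$, gives
\begin{equation*}
\tfrac12 \LL^*(|v|^2) \le -c_0\,|v|^{\gamma+2} + C_0,
\end{equation*}
while a direct calculation using the $\sigma$-symmetry of the collision kernel (so that only the component of $\sigma$ along $(v-v_*)/|v-v_*|$ survives) yields
\begin{equation*}
|\LL^*(v)| \le C_1\bigl(1+|v|^{\gamma+1}\bigr).
\end{equation*}
These two estimates and the strengthened confinement assumption $x\cdot\nabla\Phi \ge \gamma_1 \langle x\rangle^{\gamma+2} + \gamma_2\Phi - A$ are the ingredients we need.

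Next, plug these into the expression for $\UU V$. The dangerous cross term is $\alpha\, x\cdot\LL^*(v)$, which is bounded by $C_1\alpha |x|(1+|v|^{\gamma+1})$. This is where Young's inequality with conjugate exponents $p=\gamma+2$, $q=(\gamma+2)/(\gamma+1)$ comes in: for every $\eta>0$,
\begin{equation*}
|x|\,|v|^{\gamma+1} \le \eta\,|x|^{\gamma+2} + C_\eta\,|v|^{\gamma+2}.
\end{equation*}
Combining this with the confinement bound $-\alpha\, x\cdot\nabla\Phi \le -\alpha\gamma_1\langle x\rangle^{\gamma+2} - \alpha\gamma_2\Phi + \alpha A$, the term $-\alpha\gamma_1\langle x\rangle^{\gamma+2}$ will absorb $\alpha\eta\,|x|^{\gamma+2}$ once $\eta$ is chosen small enough (say $\eta\le \gamma_1/2$). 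On the velocity side, the term $-c_0|v|^{\gamma+2}$ coming from $\tfrac12 \LL^*(|v|^2)$ will absorb $\alpha C_1 C_\eta |v|^{\gamma+2}$ as soon as $\alpha$ is small enough, and it also swallows the lower-order $\alpha |v|^2$ outside a compact set. Finally the cross term $2\beta\, x\cdot v$ is bounded by $\beta(|x|^2+|v|^2)$, which is of lower order in both variables and can be absorbed in the same way, provided $\beta$ is small. Throughout we impose $\alpha^2 < 2\beta$ so that the quadratic form $\tfrac12|v|^2 + \alpha\, x\cdot v + \beta|x|^2$ is positive, which makes $V \ge 0$ (using that $\Phi$ is bounded below).

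After these cancellations one obtains an estimate of the form
\begin{equation*}
\UU V \le -\lambda\bigl(\Phi(x) + |v|^{\gamma+2} + \langle x\rangle^{\gamma+2}\bigr) + K,
\end{equation*}
for some explicit $\lambda, K>0$, and since the assumption on $\Phi$ together with $\Phi$ bounded below implies $V \lesssim 1 + \Phi(x) + \langle x\rangle^{\gamma+2} + |v|^{\gamma+2}$ outside a compact set, this gives $\UU V \le -\lambda' V + K'$, which is the Lyapunov condition by Remark \ref{rem:Lyapunov-equivalent}. The main obstacle is precisely the term $\alpha\,x\cdot\LL^*(v)$: in the BGK case it was simply $-\alpha\, x\cdot v$, but here its growth $|x|\,|v|^{\gamma+1}$ forces us to upgrade the assumption on $\Phi$ from $|x|^2$ to $\langle x\rangle^{\gamma+2}$ so that Young's inequality closes the estimate.
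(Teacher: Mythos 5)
Your proposal is correct and follows essentially the same approach as the paper: the same ansatz $V=\Phi+\tfrac12|v|^2+\alpha x\cdot v+\beta|x|^2$, the same pointwise collisional bounds $\LL^*(|v|^2)\lesssim -\langle v\rangle^{\gamma+2}+C$ and $\LL^*(x\cdot v)\lesssim |x|\langle v\rangle^{\gamma+1}$ (the paper computes $\LL^*(x\cdot v)$ directly rather than $x\cdot\LL^*(v)$, but since $\LL$ acts only in $v$ these are identical), and the same Young's inequality with exponents $\gamma+2$ and $(\gamma+2)/(\gamma+1)$ to close the cross term against the strengthened confinement $x\cdot\nabla\Phi\gtrsim\langle x\rangle^{\gamma+2}$. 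The paper fixes $\beta=\alpha$ small whereas you keep $\alpha,\beta$ separate with $\alpha^2<2\beta$, but this is a cosmetic difference.
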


\begin{proof}
  We are going to show that, for an appropriate choice of
  $\alpha, \beta$ it holds that
  \begin{equation*}
    (\TT^* + \LL^*)(V) \leq -\lambda V + K,
  \end{equation*}
  for some $\lambda, K > 0$, where $\LL^*$ is the dual of the
  Boltzmann collision operator, and
  $\TT^* f =  v\cdot \nabla_xf - \nabla_x \Phi \cdot \nabla_vf$ (the
  dual of the transport operator, which has the same expression as
  -$\TT$). This will show Hypotheses \ref{confinement} (see Remark
  \ref{rem:Lyapunov-equivalent}).
  
  Let us look at how the collision operator acts on the different
  terms. First,
\[ \int_{\mathbb{R}^d} \mathcal{L}(f) |v|^2 \mathrm{d}v
  =\int_{\mathbb{R}^d} \int_{\mathbb{R}^d} \int_{\mathbb{S}^{d-1}}
  f(v) \mathcal{M}(v_*) \tilde{b}(|\xi|) |v-v_*|^\gamma \left(|v'|^2 -
    |v|^2 \right) \mathrm{d}n \mathrm{d}v \mathrm{d}v_*. \] Repeating
the same calculation as in the proof of Lemma \ref{lem:HT-Boltzmann}
in this case, we see that
\[ \int_{\mathbb{R}^d}\mathcal{L}(f)|v|^2 \mathrm{d}v \leq -\alpha_1
  \int_{\mathbb{R}^d}\langle v \rangle^{\gamma+2} f(v)\mathrm{d}v +
  \alpha_2 \int_{\mathbb{R}^d} f(v) \mathrm{d}v.\]
That is,
\begin{equation}
  \label{eq:L1}
  \LL^* (|v|^2) \leq -\alpha_1 \langle v \rangle^{\gamma + 2} + \alpha_2.
\end{equation}
Similarly we have
\[ \int_{\mathbb{R}^d} \mathcal{L}(f) x \cdot v \mathrm{d}v = \int_{\mathbb{R}^d} \int_{\mathbb{R}^d} \int_{\mathbb{S}^{d-1}} f(v) \mathcal{M}(v_*) \tilde{b}(|\xi|)|v-v_*|^\gamma \left( v' \cdot x - v \cdot x\right) \mathrm{d}n \mathrm{d}v \mathrm{d}v_*. \]
We can see that
\[ v' \cdot x - v \cdot x = (v \cdot n)(x \cdot n) - (v_* \cdot n) (x \cdot n). \] Integrating this gives that
\begin{align*}
\int_{\mathbb{R}^d} \mathcal{L}(f) x\cdot v \mathrm{d}v = & \int_{\mathbb{R}^d} \int_{\mathbb{R}^d} \int_{\mathbb{S}^{d-1}} f(v) \mathcal{M}(v_*) \tilde{b}(|\xi|)|v-v_*|^\gamma (v \cdot n) (x \cdot n) \mathrm{d}v_* \mathrm{d}v \mathrm{d}n\\
\leq & \int_{\mathbb{R}^d} f(v) \langle v\rangle^{\gamma+1} |x| \mathrm{d}v,
\end{align*}
so
\begin{equation}
  \label{eq:L2}
  \LL^*(x\cdot v) \leq \langle  v\rangle^{\gamma + 1} |x|.
\end{equation}
For the effect of $\TT^*$, notice that
\[ V(x,v) = \Phi(x) + |v|^2/2 + \alpha x\cdot v + \beta |x|^2 =
  H(x,v) + \alpha x\cdot v + \beta |x|^2, \] where $H(x,v)$ denotes
the energy. We have
\begin{equation*} 
  \TT^* (H(x,v)) = 0,
  \qquad
  \TT^* (|x|^2) = 2 x \cdot v,
  \qquad
  \TT^* (x \cdot v) = |v|^2 - x \cdot \nabla_x \Phi(x)
\end{equation*}
Using this together with \eqref{eq:L1} and \eqref{eq:L2} we have
\begin{multline*}
  (\LL^* + \TT^*) (V)
  \leq
  - \frac{\alpha_1}{2} \langle v\rangle^{\gamma+2}
  + \frac{\alpha_2}{2} + \alpha \langle v\rangle^{\gamma+1}|x|
  + \alpha |v|^2
  - \alpha x \cdot \nabla_x \Phi(x)
  + 2\beta x \cdot v
  \\
  \leq
   \left (\alpha-\frac{\alpha_1}{2} \right ) \langle v\rangle^{\gamma+2} +
    (\alpha+2\beta)|x|\langle v\rangle^{\gamma+1}
  - \alpha \gamma_1 \langle x\rangle^{\gamma+2}
    - \alpha \gamma_2 \Phi(x)+ \frac{\alpha_2}{2}
    + \alpha A.
  \end{multline*}
  Setting $\beta=\alpha$, $\alpha \leq \alpha_1/4$, and using Young's
  inequality $AB \leq \frac{1}{p} A^p + \frac{1}{q} B^q$ on
  $\left( |x|/\epsilon\right)\left(\langle v \rangle^{\gamma+1}
    \epsilon \right)$ with $p=(\gamma +2)/(\gamma+1)$ and
  $q=\gamma+2$, we get
  \begin{multline*} (\LL^* + \TT^*) (V) \leq
    \left(-\frac{\alpha_1}{4}
      + 3 \epsilon^{\frac{\gamma+2}{\gamma+1}}
      \frac{\gamma+1}{\gamma+2}\right)\langle v\rangle^{\gamma +2} +
    \left(\frac{3 \alpha^{\gamma+2}}{(\gamma+2)} \frac{1}{\epsilon^{\gamma+2}} - \alpha
      \gamma_1 \right) \langle x\rangle^{\gamma+2}
    \\
    - \alpha
      \gamma_2 \Phi(x) +\frac{\alpha_2}{2} + \alpha A.
\end{multline*}
Now we can choose $\epsilon$ small enough so that the
$\langle v \rangle^{\gamma+2}$ term is negative and then for this
$\epsilon$ choose $\alpha$ small enough so that the
$\langle x \rangle^{\gamma+2}$ term is negative (since
$\gamma+2 \geq 1$). Then, since $\langle z \rangle^{\gamma+2}$ grows faster than $|z|^2$ at infinity this gives
\[ (\LL^* + \TT^*)(V)
  \leq
  -\lambda_1
  (|x|^2+|v|^2)- \lambda_2 \Phi(x) + K.
\]
Then using equivalence between the quadratic forms
\[ |x|^2 + |v|^2 \quad \mbox{and} \quad \frac{1}{2}|v|^2 + \alpha x \cdot v + \alpha |x|^2, \] when $\alpha < 1/2$ we have the result in the Lemma.
\end{proof}

\begin{proof}[Proof of Theorem \ref{thm:main-confining} in the case of the linear Boltzmann equation]
We have the minorisation condition in Lemma \ref{lem:LBEwholespaceminorisation} and the Lyapunov condition from Lemma \ref{lem:LBEwholespacelyapunov}. Therefore we can apply Harris's Theorem.
\end{proof}

\subsection{Subgeometric convergence}

As with the linear relaxation Boltzmann equation, the minorisation
results in Lemma \ref{lem:LBEwholespaceminorisation} holds for $\Phi$
which are not sufficiently confining to prove the Lyapunov
structure. However in this situation we can still prove subgeometric
rates of convergence. Here in order to find a Lyapunov functional we
need to be more precise about how $\mathcal{L}$ acts on the $x \cdot v$ moment.

We need $\Phi(x)$ to provide a stronger confinement if we are in the hard potential case. We want 
\begin{equation} \label{eq:Phi-subgeometric}
x \cdot \nabla_x \Phi(x) \geq \gamma_1 \langle x \rangle^{1+ \delta} + \gamma_2 \Phi(x) -A, \quad \Phi(x) \le \gamma_3 \langle x \rangle^{1+\delta}
\end{equation}
for some $\gamma_1, \gamma_2, \gamma_3, \delta>0$. Then we have
\begin{lem} \label{lem:LBEwholespacesubgeometriclyapunov}
Assume that $\Phi$ is a $C^2$ potential satisfying \eqref{eq:Phi-subgeometric}.
Then there exist some $\alpha, \beta >0$ satisfying $4 \alpha^2 < \beta$ such that the function 
\[ V(x,v) = \Phi(x) + \frac{1}{2}|v|^2 + \frac {\alpha x \cdot v } {\langle x \rangle} + \beta \langle x \rangle \] satisfies
\[ \UU(V) \leq -\lambda V^{\delta / (1+\delta)} + K, \] for some positive constants $\lambda,K$.
\end{lem}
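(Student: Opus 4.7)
The plan is to expand $\UU V = \TT^* V + \LL^* V$, where $\TT^* \phi = v \cdot \nabla_x \phi - \nabla_x \Phi \cdot \nabla_v \phi$ and $\LL^*$ is the dual of the linear Boltzmann operator acting in the $v$ variable. The decomposition $V = H(x,v) + \alpha (x \cdot v)/\langle x \rangle + \beta \langle x \rangle$, with $H := \Phi + \tfrac{1}{2}|v|^2$, is engineered so that $\TT^* H = 0$ while $\LL^* \Phi = \LL^* \langle x \rangle = 0$. The essential design choice is the division by $\langle x \rangle$ in the cross term: this converts the growth $x \cdot \nabla_x \Phi \gtrsim \langle x \rangle^{1+\delta}$ into a dissipation of order $\langle x \rangle^{\delta}$, which is precisely the exponent matching the subgeometric rate $\delta/(1+\delta)$ predicted by the subgeometric Harris theorem.

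First I would compute the transport action on the cross term. Direct differentiation gives
\[
\TT^*\!\left(\frac{x \cdot v}{\langle x \rangle}\right) = \frac{|v|^2}{\langle x \rangle} - \frac{(x \cdot v)^2}{\langle x \rangle^3} - \frac{x \cdot \nabla_x \Phi(x)}{\langle x \rangle},
\]
and hypothesis \eqref{eq:Phi-subgeometric} bounds the last term above by $-\gamma_1 \langle x \rangle^{\delta} - \gamma_2 \Phi/\langle x \rangle + A/\langle x \rangle$. One also has $\TT^* \langle x \rangle = (x \cdot v)/\langle x \rangle$. For the collisional part, the moment estimate underlying Lemma \ref{lem:HT-Boltzmann} gives $\LL^*(|v|^2) \leq -\alpha_1 \langle v \rangle^{\gamma+2} + \alpha_2$, and the argument of Lemma \ref{lem:L-Boltzmann} together with $|x/\langle x \rangle| \leq 1$ yields $|\LL^*(\frac{x\cdot v}{\langle x \rangle})| \leq C \langle v \rangle^{\gamma+1}$. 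Dropping the non-positive $-(x\cdot v)^2/\langle x \rangle^3$ term and collecting everything:
\[
\UU V \leq -\tfrac{\alpha_1}{2} \langle v \rangle^{\gamma+2} + \alpha \frac{|v|^2}{\langle x \rangle} + C\alpha \langle v \rangle^{\gamma+1} + \beta \frac{x \cdot v}{\langle x \rangle} - \alpha \gamma_1 \langle x \rangle^{\delta} - \alpha \gamma_2 \frac{\Phi}{\langle x \rangle} + K_0.
\]

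Next, choosing $\alpha$ small and applying Young's inequality, the positive $v$-contributions $\alpha |v|^2/\langle x \rangle \leq \alpha |v|^2$, $C\alpha \langle v \rangle^{\gamma+1}$, and $\beta |v|$ are all absorbed into a fraction of $-\tfrac{\alpha_1}{2}\langle v \rangle^{\gamma+2}$ plus a constant, giving
\[
\UU V \leq -c_1 \langle v \rangle^{\gamma+2} - c_2 \langle x \rangle^{\delta} - c_3 \frac{\Phi}{\langle x \rangle} + K.
\]
The condition $4 \alpha^2 < \beta$ ensures (via Young's inequality applied to $\alpha x \cdot v/\langle x \rangle$) that $V \gtrsim \tfrac{1}{4}|v|^2 + \tfrac{\beta}{2}\langle x \rangle$, so after shifting $V$ by a constant we have $V \geq 1$ and $V \leq C(\langle x \rangle^{1+\delta} + |v|^2)$, where the upper bound also uses $\Phi \leq \gamma_3 \langle x \rangle^{1+\delta}$. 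Hence
\[
V^{\delta/(1+\delta)} \leq C'\bigl(\langle x \rangle^{\delta} + |v|^{2\delta/(1+\delta)}\bigr),
\]
and since $\delta < 1$ forces $2\delta/(1+\delta) < 1 \leq \gamma+2$, the term $\langle v \rangle^{\gamma+2}$ dominates $|v|^{2\delta/(1+\delta)}$. Both pieces of $V^{\delta/(1+\delta)}$ are then controlled by the negative terms in $\UU V$, yielding $\UU V \leq -\lambda V^{\delta/(1+\delta)} + K'$.

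The main obstacle is the simultaneous balancing of $\alpha$ and $\beta$: the term $\alpha |v|^2/\langle x \rangle$ from $\TT^*(x\cdot v/\langle x \rangle)$ can become comparable to $|v|^2$ when $|x|$ is small, and must be swallowed by the collisional dissipation $-\langle v \rangle^{\gamma+2}$; at the same time $\beta$ must dominate $\alpha^2$ to guarantee that $V$ is comparable to $|v|^2 + \langle x \rangle$ from below, which is needed so that $V^{\delta/(1+\delta)}$ (controlling $V$ from above via $\Phi \leq \gamma_3 \langle x \rangle^{1+\delta}$) can in turn be controlled by the dissipation we produce. The stated condition $4 \alpha^2 < \beta$ is precisely the quantitative form of this compatibility requirement.
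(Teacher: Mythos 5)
Your proposal is correct and follows essentially the same route as the paper: the same Lyapunov function and decomposition, the same computation of $\TT^*$ on $H$, $\langle x\rangle$ and $x\cdot v/\langle x\rangle$, the same collisional moment bounds $\LL^*(|v|^2)\leq -\alpha_1\langle v\rangle^{\gamma+2}+\alpha_2$ and $\LL^*(x\cdot v/\langle x\rangle)\leq \langle v\rangle^{\gamma+1}$, and the same use of $\Phi\leq\gamma_3\langle x\rangle^{1+\delta}$ to compare $V^{\delta/(1+\delta)}$ with the produced dissipation. (You even get the correct power $\langle x\rangle^3$ in the $(x\cdot v)^2$ term where the paper's display has a typo; the term is dropped as non-positive in both arguments, so nothing changes.)
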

\begin{rem}
	Notice that 
	\[ V(x,v) \geq \Phi(x) + \frac{1}{4} |v|^2 + (\beta -4\alpha^2) \langle x \rangle, \]
	so that the sub level sets of $V$ are bounded.
\end{rem}

\begin{proof}
  Using \eqref{eq:L1}, \eqref{eq:L2} and that 
  \begin{equation*}
   \TT^* (H(x,v)) = 0,
  \qquad
  \TT^* (\langle x\rangle) = \frac{x \cdot v}{\langle x\rangle},  \qquad  \TT^* \left (\frac{x \cdot v}{\langle x\rangle} \right )  = \frac{|v|^2}{\langle x\rangle} - \frac{(x \cdot v)^2}{\langle x\rangle ^2} - \nabla_x \Phi \cdot \frac{x}{\langle x\rangle},
  \end{equation*}
  we have the following 
  \begin{multline*}
    (\LL^* + \TT^*)(V(x,v))
    \leq
     - \alpha_1\langle v\rangle^{\gamma+2} + \alpha_2
      + \alpha \langle v\rangle^{\gamma+1} + \alpha |v|^2
     - \frac {\alpha x \cdot \nabla_x \Phi(x)}
      {\langle x \rangle}
      + \frac {\beta x \cdot v  } {\langle x \rangle}.
\end{multline*} 
  Using \eqref{eq:Phi-subgeometric}, $ \dfrac{x \cdot v}{\langle x \rangle} \leq \langle v \rangle \leq \langle v \rangle ^{2 +\gamma}$ and $\Phi(x)^{\delta / (1+\delta)} \leq \gamma_3^{\delta / (1+\delta)} \langle x \rangle ^{\delta} $ we obtain
    \begin{multline*}  
     (\LL^* + \TT^*)(V(x,v))
    \\
    \leq  (2\alpha-\alpha_1 + \beta) \langle v\rangle^{\gamma+2}
      - \alpha \gamma_1 \langle x\rangle^{\delta}
      - \alpha \gamma_2 \frac {\Phi(x)} {\langle x \rangle}
      + \alpha_2 + \alpha A
    \\
    \leq 
           \lambda_1 \left( -|v|^2  -\langle x \rangle^\delta  -
           \Phi(x)^{\delta / (1+\delta)} +C  \right)
         \leq
         \lambda V(x,v)^{\delta / (1+\delta)} 
        + K,
\end{multline*} for some $\lambda_1, K >0$.
To make the last two inequalities valid we choose $\alpha$ and $\beta$ so that $\alpha_1 > 2\alpha +\beta$ and $4 \alpha^2 < \beta$ so that 
\[ V(x,v) \geq \Phi(x) + \frac{1}{4} |v|^2 + (\beta -4\alpha^2) \langle x \rangle.\]
\end{proof}

\begin{proof}[Proof of Theorem \ref{thm:main-subgeometric} in the linear Boltzmann case]
We have the minorisation condition in Lemma \ref{lem:LBEwholespaceminorisation} and the Lyapunov condition from Lemma \ref{lem:LBEwholespacesubgeometriclyapunov}. Therefore we can apply Harris's Theorem.
\end{proof}

\section*{Acknowledgements}

The authors would like to thank to S. Mischler, C. Mouhot,
J. F\'{e}joz and R.~Ortega for some useful discussion and ideas for
some parts in the paper.

JAC and HY were supported by projects MTM2014-52056-P
and MTM2017-85067-P, funded by the Spanish government and the European
Regional Development Fund. CC was supported by grants from R\'{e}gion
Ile-de-France, the DIM PhD fellowship program. HY was also supported by
the Basque Government through the BERC 2018-2021 program and by
Spanish Ministry of Economy and Competitiveness MINECO: BCAM Severo
Ochoa excellence accreditation SEV-2017-0718 and by ``la Caixa''
Foundation. JE was supported by the UK Engineering and Physical
Sciences Research Council (EPSRC) grant EP/L016516/1 for the
University of Cambridge Centre for Doctoral Training, the Cambridge
Centre for Analysis, the European Research Council (ERC) under the
grant MAFRAN, and FSPM postdoctoral fellowship (since October 2018)
and the grant ANR-17-CE40-0030.

\bibliographystyle{plain}
\bibliography{harris}{}

\begin{thebibliography}{10}

\bibitem{BCG08}
Dominique Bakry, Patrick Cattiaux, and Arnaud Guillin.
\newblock Rate of convergence for ergodic continuous {M}arkov processes:
  {L}yapunov versus {P}oincar\'e.
\newblock {\em J. Funct. Anal.}, 254(3):727--759, 2008.

\bibitem{BCG2017}
Vincent Bansaye, Bertrand Cloez, and Pierre Gabriel.
\newblock Ergodic behavior of non-conservative semigroups via generalized
  doeblin's conditions.
\newblock November 2017.

\bibitem{BLP79}
Alain Bensoussan, Jacques~L. Lions, and George~C. Papanicolaou.
\newblock Boundary layers and homogenization of transport processes.
\newblock {\em Publ. Res. Inst. Math. Sci.}, 15(1):53--157, 1979.

\bibitem{BL55}
Peter~G. Bergmann and Joel~L. Lebowitz.
\newblock New approach to nonequilibrium processes.
\newblock {\em Phys. Rev. (2)}, 99:578--587, 1955.

\bibitem{BS13}
Etienne Bernard and Francesco Salvarani.
\newblock On the exponential decay to equilibrium of the degenerate linear
  {B}oltzmann equation.
\newblock {\em J. Funct. Anal.}, 265(9):1934--1954, 2013.

\bibitem{BCL15}
Marzia Bisi, Jos\'e~A. Ca\~nizo, and Bertrand Lods.
\newblock Entropy dissipation estimates for the linear {B}oltzmann operator.
\newblock {\em J. Funct. Anal.}, 269(4):1028--1069, 2015.

\bibitem{B15}
Marc Briant.
\newblock Instantaneous exponential lower bound for solutions to the
  {B}oltzmann equation with {M}axwellian diffusion boundary conditions.
\newblock {\em Kinet. Relat. Models}, 8(2):281--308, 2015.

\bibitem{B15b}
Marc Briant.
\newblock Instantaneous filling of the vacuum for the full {B}oltzmann equation
  in convex domains.
\newblock {\em Arch. Ration. Mech. Anal.}, 218(2):985--1041, 2015.

\bibitem{CCC13}
Jos\'{e}~A. Ca\~{n}izo, Jos\'{e}~A. Carrillo, and S\'{\i}lvia Cuadrado.
\newblock {Measure Solutions for Some Models in Population Dynamics}.
\newblock {\em Acta Applicandae Mathematicae}, 123(1):141--156, February 2013.

\bibitem{CEL17}
Jos{\'e}~A. {Ca{\~n}izo}, Amit {Einav}, and Bertrand {Lods}.
\newblock {On the rate of convergence to equilibrium for the linear boltzmann
  equation with soft potentials}.
\newblock {\em ArXiv e-prints}, May 2017.

\bibitem{CCG03}
Maria~J. C\'aceres, Jos\'e~A. Carrillo, and Thierry Goudon.
\newblock Equilibration rate for the linear inhomogeneous relaxation-time
  {B}oltzmann equation for charged particles.
\newblock {\em Comm. Partial Differential Equations}, 28(5-6):969--989, 2003.

\bibitem{CY19}
Jos{\'e}~A. Ca{\~n}izo and Havva Yolda{\c{s}}.
\newblock Asymptotic behaviour of neuron population models structured by
  elapsed-time.
\newblock {\em Nonlinearity}, 32(2):464, 2019.

\bibitem{C18}
Chuqi {Cao}.
\newblock {The kinetic Fokker-Planck equation with weak confinement force}.
\newblock {\em ArXiv e-prints}, January 2018.

\bibitem{CELMM16}
Eric~A. {Carlen}, Raffaelle {Esposito}, Joel~L. {Lebowitz}, Rossana {Marra},
  and Cl{\'e}ment {Mouhot}.
\newblock {Approach to the steady state in kinetic models with thermal
  reservoirs at different temperatures}.
\newblock {\em ArXiv e-prints}, September 2016.

\bibitem{CG14}
Patrick Cattiaux and Arnaud Guillin.
\newblock Functional inequalities via {L}yapunov conditions.
\newblock In {\em Optimal transportation}, volume 413 of {\em London Math. Soc.
  Lecture Note Ser.}, pages 274--287. Cambridge Univ. Press, Cambridge, 2014.

\bibitem{DV01}
Laurent Desvillettes and C{\'e}dric Villani.
\newblock On the trend to global equilibrium in spatially inhomogeneous
  entropy-dissipating systems: the linear {F}okker-{P}lanck equation.
\newblock {\em Comm. Pure Appl. Math.}, 54(1):1--42, 2001.

\bibitem{DV05}
Laurent Desvillettes and C{\'e}dric Villani.
\newblock On the trend to global equilibrium for spatially inhomogeneous
  kinetic systems: the {B}oltzmann equation.
\newblock {\em Invent. Math.}, 159(2):245--316, 2005.

\bibitem{DMS15}
Jean Dolbeault, Cl{\'e}ment Mouhot, and Christian Schmeiser.
\newblock Hypocoercivity for linear kinetic equations conserving mass.
\newblock {\em Trans. Amer. Math. Soc.}, 367(6):3807--3828, 2015.

\bibitem{DFG09}
Randal Douc, Gersende Fort, and Arnaud Guillin.
\newblock Subgeometric rates of convergence of {$f$}-ergodic strong {M}arkov
  processes.
\newblock {\em Stochastic Process. Appl.}, 119(3):897--923, 2009.

\bibitem{DG2017}
Gr{\'e}gory {Dumont} and Pierre {Gabriel}.
\newblock {The mean-field equation of a leaky integrate-and-fire neural
  network: measure solutions and steady states}.
\newblock {\em ArXiv e-prints}, October 2017.

\bibitem{EL10}
Weinan E and Dong Li.
\newblock The {A}ndersen thermostat in molecular dynamics.
\newblock {\em Comm. on Pure and App. Math.}, 61(1):96--136, 2010.

\bibitem{Me17}
Josephine {Evans}.
\newblock {Hypocoercivity in Phi-entropy for the Linear Relaxation Boltzmann
  Equation on the Torus}.
\newblock {\em ArXiv e-prints}, February 2017.

\bibitem{FM01}
Nicolas Fournier and Sylvie M\'el\'eard.
\newblock A {M}arkov process associated with a {B}oltzmann equation without
  cutoff and for non-{M}axwell molecules.
\newblock {\em J. Statist. Phys.}, 104(1-2):359--385, 2001.

\bibitem{Gabriel18}
Pierre Gabriel.
\newblock Measure solutions to the conservative renewal equation.
\newblock {\em ESAIM: ProcS}, 62:68--78, 2018.

\bibitem{GMM}
Maria~P. Gualdani, St\'{e}phane Mischler, and Cl\'{e}ment Mouhot.
\newblock {\em Factorization for non-symmetric operators and exponential
  H-theorem}, volume 153 of {\em Mémoires de la Societ\'{e} Math\'{e}matique
  de France}.
\newblock Societ\'{e} Math\'{e}matique de France, 2018.

\bibitem{H10}
Martin Hairer.
\newblock Lecture notes: P@w course on the convergence of markov processes.
\newblock 2016.

\bibitem{HM11}
Martin Hairer and Jonathan~C. Mattingly.
\newblock Yet another look at {H}arris' ergodic theorem for {M}arkov chains.
\newblock In {\em Seminar on {S}tochastic {A}nalysis, {R}andom {F}ields and
  {A}pplications {VI}}, volume~63 of {\em Progr. Probab.}, pages 109--117.
  Birkh\"auser/Springer Basel AG, Basel, 2011.

\bibitem{H56}
Theodore~E. Harris.
\newblock The existence of stationary measures for certain {M}arkov processes.
\newblock In {\em Proceedings of the {T}hird {B}erkeley {S}ymposium on
  {M}athematical {S}tatistics and {P}robability, 1954--1955, vol. {II}}, pages
  113--124. University of California Press, Berkeley and Los Angeles, 1956.

\bibitem{H07}
Fr{\'e}d{\'e}ric H{\'e}rau.
\newblock Hypocoercivity and exponential time decay for the linear
  inhomogeneous relaxation {B}oltzmann equation.
\newblock {\em Asymptot. Anal.}, 46(3-4):349--359, 2006.

\bibitem{HN04}
Fr{\'e}d{\'e}ric H{\'e}rau and Francis Nier.
\newblock Isotropic hypoellipticity and trend to equilibrium for the
  {F}okker-{P}lanck equation with a high-degree potential.
\newblock {\em Arch. Ration. Mech. Anal.}, 171(2):151--218, 2004.

\bibitem{BL57}
Joel~L. Lebowitz and Peter~G. Bergmann.
\newblock Irreversible {G}ibbsian ensembles.
\newblock {\em Ann. Physics}, 1:1--23, 1957.

\bibitem{LM17}
Bertrand Lods and Mustapha Mokhtar-Kharroubi.
\newblock Convergence to equilibrium for linear spatially homogeneous
  {B}oltzmann equation with hard and soft potentials: a semigroup approach in
  {$L^1$}-spaces.
\newblock {\em Math. Methods Appl. Sci.}, 40(18):6527--6555, 2017.

\bibitem{LMT}
Bertrand Lods, Cl\'{e}ment Mouhot, and Giuseppe Toscani.
\newblock Relaxation rate, diffusion approximation and {Fick's} law for
  inelastic scattering {Boltzmann} models.
\newblock {\em Kinetic and Related Models}, 1(2):223--248, May 2008.

\bibitem{HMS02}
Jonathan~C. Mattingly, Andrew~M. Stuart, and Desmond~J. Higham.
\newblock Ergodicity for {SDE}s and approximations: locally {L}ipschitz vector
  fields and degenerate noise.
\newblock {\em Stochastic Process. Appl.}, 101(2):185--232, 2002.

\bibitem{MT93}
Sean~P. Meyn and Richard~L. Tweedie.
\newblock {\em Markov chains and stochastic stability}.
\newblock Communications and Control Engineering Series. Springer-Verlag
  London, Ltd., London, 1993.

\bibitem{M14}
Mustapha Mokhtar-Kharroubi.
\newblock On {$L^1$} exponential trend to equilibrium for conservative linear
  kinetic equations on the torus.
\newblock {\em J. Funct. Anal.}, 266(11):6418--6455, 2014.

\bibitem{M05}
Cl\'ement Mouhot.
\newblock Quantitative lower bounds for the full {B}oltzmann equation. {I}.
  {P}eriodic boundary conditions.
\newblock {\em Comm. Partial Differential Equations}, 30(4-6):881--917, 2005.

\bibitem{NM06}
Cl{\'e}ment Mouhot and Lukas Neumann.
\newblock Quantitative perturbative study of convergence to equilibrium for
  collisional kinetic models in the torus.
\newblock {\em Nonlinearity}, 19(4):969--998, 2006.

\bibitem{V09}
C{\'e}dric Villani.
\newblock Hypocoercivity.
\newblock {\em Mem. Amer. Math. Soc.}, 202(950):iv+141, 2009.

\end{thebibliography}

\end{document}